\documentclass[12pt,a4paper,openbib]{article}
\usepackage[utf8]{inputenc}
\usepackage[T1]{fontenc}
\usepackage[french]{babel}
\usepackage{amsmath}
\usepackage{amsfonts}
\usepackage{dsfont}
\usepackage{tikz}
\usetikzlibrary{cd}
\usepackage{authblk}
\usepackage{amssymb}
\usepackage{amsthm}
\usepackage{lmodern}
\usepackage{fancyhdr}
\usepackage{lettrine}
\usepackage{bm}
\usepackage{enumerate}
\usepackage{multicol}
\usepackage[hidelinks]{hyperref}
\usepackage[nottoc, notlof, notlot]{tocbibind}
\usepackage[left=2cm,right=2cm,top=2cm,bottom=2cm]{geometry} 
\usepackage{graphicx}
\usepackage{mathtools}
\usepackage{wasysym}
\usepackage{stmaryrd}
\usepackage[all,cmtip]{xy}
\usepackage{enumitem}
\usepackage{relsize} 
\usepackage{mathrsfs}

\setcounter{secnumdepth}{5}
\makeatletter

\newcommand{\address}[1]{\gdef\@address{#1}}
\newcommand{\email}[1]{\gdef\@email{\url{#1}}}
\newcommand{\@endstuff}{\par\vspace{\baselineskip}\noindent\small
\begin{tabular}{@{}l}\scshape\@address\\\textit{E-mail address:} \@email\end{tabular}}
\AtEndDocument{\@endstuff}
\makeatother

\author{Damien Junger\footnote{This work has been written in a great part during the author PhD thesis at the ENS Lyon. His work are currently funded by the Deutsche Forschungsgemeinschaft (DFG, German Research Foundation) under Germany's Excellence Strategy EXC 2044–390685587, Mathematics Münster: Dynamics–Geometry–Structure.}}
\address{Mathematisches Institut, Universität Münster,\\ Fachbereich Mathematik und Informatik der Universität Münster,  Orléans-Ring 10, 48149 Münster, Germany.}
\email{djunger@uni-muenster.de}
\title{Cohomologie analytique des arrangements d'hyperplans}

\newtheorem{theointro}{Th\'eor\`eme}

\newtheorem{remintro}{Remarque}

\newtheorem{theo}{Th\'eor\`eme}[section]
\newtheorem{lem}[theo]{Lemme}
\newtheorem{coro}[theo]{Corollaire}
\newtheorem{prop}[theo]{Proposition}

\theoremstyle{definition}
\newtheorem{defi}[theo]{D\'efinition}

\theoremstyle{remark}
\newtheorem{rem}[theo]{Remarque}

\newtheorem{ex}[theo]{Exemple}

\DeclareMathOperator{\spec}{Spec}

\DeclareMathOperator{\spg}{Sp}

\DeclareMathOperator{\gln}{GL}

\DeclareMathOperator{\modut}{-Mod}

\DeclareMathOperator{\hhh}{H}
\DeclareMathOperator{\rrr}{R}

\DeclareMathOperator{\pic}{Pic}

\font\tengoth=eufb10
\font\sevengoth=eufb7
\font\fivegoth=eufb5

\newfam\gothfam
\textfont\gothfam=\tengoth
\scriptfont\gothfam=\sevengoth
\scriptscriptfont\gothfam=\fivegoth

\def\A{{\mathbb{A}}}
\def\B{{\mathbb{B}}}

\def\F{{\mathbb{F}}}
\def\G{{\mathbb{G}}}
\def\H{{\mathbb{H}}}

\def\N{{\mathbb{N}}}

\def\P{{\mathbb{P}}}
\def\Q{{\mathbb{Q}}}
\def\R{{\mathbb{R}}}

\def\Z{{\mathbb{Z}}}

\def\AC{{\mathcal{A}}}
\def\BC{{\mathcal{B}}}
\def\CC{{\mathcal{C}}}

\def\HC{{\mathcal{H}}}

\def\KC{{\mathcal{K}}}

\def\OC{{\mathcal{O}}}

\def\UC{{\mathcal{U}}}
\def\VC{{\mathcal{V}}}

\def\XG{{\mathfrak{X}}}

\def\mG{{\mathfrak{m}}}

\def\Ff{{\mathscr{F}}}

\def\Lf{{\mathscr{L}}}

\def\Of{{\mathscr{O}}}

\def\bar#1{\overline{#1}}
\def\het#1{{\rm H}^{#1}_{\rm ét}}

\def\hzar#1{{\rm H}^{#1}_{\rm zar}}

\def\han#1{{\rm H}^{#1}_{\rm an}}

\def\hdr#1{{\rm H}^{#1}_{\rm dR}}

\def\hcech#1{\check{\rm H}^{#1}}
\def\ccech#1{\check{\CC}^{#1}}

\def\et{\text{ et }}
\def\si{\text{Si }}
\def\sinon{\text{Sinon }}
\def\and{\text{ and }}

\def\fln#1#2{\xrightarrow[#2]{#1}}

\def\flsur{\twoheadrightarrow}
\def\fla{\mapsto}

\def\limp{\varprojlim}

\def\som#1#2#3{\sum\limits_{{\substack{#2}}}^{#3}{#1}}

\def\inter#1#2#3{\bigcap\limits_{{\substack{#2}}}^{#3}{#1}}
\def\uni#1#2#3{\bigcup\limits_{{\substack{#2}}}^{#3}{#1}}

\def\drt#1#2#3{\bigoplus\limits_{{\substack{#2}}}^{#3}{#1}}

\setcounter{tocdepth}{1}
\begin{document}

\maketitle

\begin{abstract}
In this article, we study the cohomology of some analytic sheaves on the complementary in the projective space of a suitable infinite collection of hyperplane like the Drinfel'd symetric space. In particular, the sheaf of invertible functions on these rigid spaces has no cohomology in degree greater or equal to $1$. This proves the vanishing of the Picard goup and the methods used give a convenient description of the global invertible functions.
\end{abstract}

\tableofcontents
 
\section*{Introduction}

   Soit $p$ un nombre premier. Cet article est li\'e \`a une s\'erie de travaux r\'ecents
   \cite{GPW1,GPW2,GPW3}, portant sur la g\'eom\'etrie et la cohomologie 
   $p$-adique des espaces sym\'etriques de Drinfeld et de leurs rev\^etements. 
   Ces espaces sont des cas particuliers d'arrangements (infinis) d'hyperplans et l'objet 
   de cet article est de comprendre ce qui se passe pour des arrangements plus g\'en\'eraux. 
   L'\'etude de leur cohomologie \'etale $p$-adique semble d\'elicate (en effet, les travaux cit\'es utilisent des propri\'et\'es sp\'ecifiques de l'espace de Drinfeld), nous nous int\'eresserons plut\^ot \`a leur cohomologie analytique \`a coefficients dans le faisceau $\G_m$ des fonctions inversibles. Notre r\'esultat principal affirme que beaucoup d'arrangements  (m\^emes infinis) d'hyperplans sont acycliques pour $\G_m$. Par exemple, cela entra\^ine que le groupe de Picard des espaces de Drinfeld est trivial, ce qui ne semble pas  être connu. 
   Il serait int\'eressant d'avoir des r\'esultats analogues pour la cohomologie \'etale,  mais cela nous semble inaccessible pour le moment. En effet, le calcul de $\het{2}(X, \G_m)$ pour l'espace de Drinfeld $X$ de dimension plus grande que $1$ semble d\'ej\`a d\'elicat (la partie de torsion est cependant bien comprise gr\^ace aux r\'esultats de Schneider-Stuhler \cite{scst} et Colmez-Dospinescu-Niziol \cite{GPW3}). 
  
    Avant de pr\'eciser nos r\'esultats principaux, mentionnons-en certaines motivations et applications \`a l'\'etude du premier rev\^etement des espaces de Drinfeld. 
Soit $K$ une extension finie de $\Q_p$, $\OC_K$ son anneau d'entiers, $\F=\F_q$ son corps r\'esiduel et $\varpi$ une uniformisante. Soit $C$ le compl\'et\'e d'une cl\^oture alg\'ebrique de $K$.
On note $\H_K^d$ l'espace sym\'etrique de Drinfeld de dimension $d\geq 1$, i.e. l'espace rigide analytique sur $K$ défini par\footnote{Il n'est pas immédiat que ce complémentaire d'un nombre infini de parties fermées est bien un espace rigide analytique mais cela découle de la remarque \ref{remarr}.} 
$$\H_K^d=\mathbb{P}^d_K\setminus \bigcup_{H\in \mathcal{H}} H,$$
avec $\mathcal{H}$ l'ensemble des hyperplans $K$-rationnels et $\mathbb{P}^d_K$ l'espace projectif rigide analytique de dimension $d$ sur $K$. L'espace $\H_K^d$ poss\`ede un mod\`ele formel 
semi-stable $\H_{\OC_K}^d$, construit par Deligne. Soit $D$ l'alg\`ebre \`a division sur $K$ d'invariant $\frac{1}{d+1}$ et $\Pi_D$ une uniformisante, un théorème fondamental de Drinfeld \cite{dr2} fournit une interpr\'etation modulaire de l'espace $\H^d_{\OC_K}$, et cette description entraîne l'existence d'un $\OC_D$-module formel universel $\XG$ sur $\H_{\OC_K}^d$. Les points de $\Pi_D$-torsion $\XG[\Pi_D]$ forment un sch\'ema formel en $\F_p$-espaces vectoriels de Raynaud. Ces derniers admettent une classification \cite{Rayn} et sont caract\'eris\'es par la donn\'ee des parties isotypiques $(\Lf_i)_{i \in \Z/(d+1)\Z}$ de $\Of(\XG[\Pi_D])$ pour certains caract\`eres de $\F_{q^{d+1}}$, dits fondamentaux. Comprendre les  fibr\'es en droites  $(\Lf_i)_i$ universels sur $\H_{\OC_K}^d$ est essentiel pour comprendre la g\'eom\'etrie   du premier rev\^etement $\Sigma^1$ de $\H_K^d$.
En fibre sp\'eciale, les  faisceaux $(\Lf_i)_i$ sont relativement bien compris et sont \'etudi\'es dans \cite{teit1}, \cite{teit2}, \cite{teit4} et \cite{GK6}. L'annulation du groupe de Picard de $\H_K^d$, qui d\'ecoule de nos r\'esultats, fournit donc une description en fibre g\'en\'erique de ces faisceaux localement libres de rang $1$ sur $\H_{\OC_K}^d$. Dans un travail ult\'erieur, nous obtiendrons une classification des $\mu_N$-torseurs sur $\H_K^d$ avec $N=q^{d+1}-1$ et nous donnerons une \'equation explicite du rev\^etement mod\'er\'e de l'espace sym\'etrique de Drinfeld. 




   Passons maintenant \`a notre r\'esultat principal. Gardons les notations ci-dessus. 
   Soit $\mathcal{A}$ une partie ferm\'ee (par exemple une partie finie) de l'espace profini
   $\mathcal{H}$ et posons 
   $${\rm Int}(\mathcal{A})=\P^d_{K}\setminus \bigcup_{H\in \mathcal{A}} H.$$
   Alors ${\rm Int}(\mathcal{A})$ poss\`ede encore une structure naturelle d'espace rigide analytique sur $K$. 
   Si $L$ est une extension compl\`ete de $K$ et 
   si $X$ est un $K$-espace analytique, on note $X_L=X\hat{\otimes}_K L$.
   
   \begin{theointro}
     Avec les notations ci-dessus, pour toute partie ferm\'ee $\mathcal{A}$ de 
     $\mathcal{H}$ et toute extension compl\`ete $L$ de $K$ on a $\han{i}({\rm Int}(\mathcal{A})_L, \G_m)=0$ pour $i\geq 1$. 
   \end{theointro}

   \begin{remintro}
   
   \begin{enumerate}
   
\item L'\'egalit\'e \cite[proposition 4.1.10]{ber2}
\[ \han{1}(X, \G_m) = \het{1}(X, \G_m)=\pic(X) \]
est valable 
pour tout espace analytique $X$. Ainsi le groupe de Picard et les fonctions inversibles sur $X$ peuvent \^etre d\'etermin\'ees en calculant sa cohomologie analytique. Ce n'est malheureusement pas le cas
des groupes de cohomologie en degr\'es strictement plus grands que $1$. 
 
  \item Nous prouvons aussi une version du th\'eor\`eme dans laquelle le faisceau $\G_m$ est remplac\'e par le sous-faisceau $\Of^{**}=1+\Of^{++}$ des fonctions $1+f$ telles que $|f|<1$ (la norme \'etant celle spectrale). Si $\Of^+$ d\'esigne le faisceau des fonctions $f$ telles que $|f|\leq 1$, il est probable que $\han{i}({\rm Int}(\mathcal{A})_L, \Of^+)=0$ pour $i\geq 1$, mais nous n'arrivons pas \`a le d\'emontrer. Le r\'esultat analogue avec ${\rm Int}(\mathcal{A})_L$ remplac\'e par une boule ferm\'ee est un th\'eor\`eme de Bartenwerfer \cite{bart} (il est crucial d'utiliser la topologie analytique pour ce genre de r\'esultat car il est totalement faux pour la topologie \'etale). Nos m\'ethodes permettent de d\'emontrer que si le r\'esultat de Bartenwerfer est aussi valable pour les poly-couronnes, alors $\han{i}({\rm Int}(\mathcal{A})_L, \Of^+)=0$ pour $i\geq 1$. 
     
     \end{enumerate}
 \end{remintro}

   Notons $\Z\left\llbracket \mathcal{A}\right\rrbracket $ le dual du $\Z$-module ${\rm LC}(\mathcal{A}, \Z)$ des fonctions 
   localement constantes sur $\mathcal{A}$, \`a valeurs dans $\Z$. On voit les éléments de $\Z\left\llbracket \mathcal{A}\right\rrbracket $ comme des mesures sur $\mathcal{A}$, à valeurs dans $\Z$. On note 
   $\Z\left\llbracket \mathcal{A}\right\rrbracket^0$ le sous-groupe des mesures de masse totale $0$ (i.e. l'orthogonal de la fonction constante $1$).
   
   \begin{theointro}
    Pour toute partie ferm\'ee $\mathcal{A}$ de 
     $\mathcal{H}$ et toute extension compl\`ete $L$ de $K$ il existe un isomorphisme naturel 
   $$\Of^*({\rm Int}(\mathcal{A})_L)/L^*\simeq \Z\left\llbracket \mathcal{A}\right\rrbracket^0.$$
    \end{theointro}

   \begin{remintro}
   \begin{enumerate}
   
    \item Ce th\'eor\`eme a été récemment obtenu par Gekeler \cite{gek} pour l'espace symétrique de Drinfeld. Notre méthode est compl\`etement diff\'erente. 
    
    \item Si l'on combine le th\'eor\`eme ci-dessus avec la suite exacte de Kummer et l'annulation du groupe de Picard, on obtient une description du groupe $\het{1}({\rm Int}(\mathcal{A})_L, \Z/n\Z)$ pour tout entier $n$. Cela semble sugg\'erer qu'il existe des descriptions explicites de la cohomologie \'etale en degr\'e cohomologique plus grand. Voir \cite{GPW3} pour le cas de l'espace de Drinfeld.

   \end{enumerate}
   \end{remintro}
    
    Nous finissons cette introduction en expliquant les grandes \'etapes de la preuve de nos r\'esultats principaux. L'ingrédient technique principal est un résultat d'annulation de van der Put \cite{vdp}, qui affirme que pour tout $r\in p^{\mathbb{Q}}$ le faisceau $\Of^{(r)}$ des fonctions de norme spectrale strictement plus petite que $r$ est acyclique sur les boules fermées et les polycouronnes de dimension arbitraire. Pour se ramener à ce type d'espaces nous utilisons les constructions géométriques de Schneider et Stuhler \cite{scst}. Plus précisément, 
    l'espace ${\rm Int}(\mathcal{A})$ possède un recouvrement de type Stein par des affinoides 
    ${\rm Int}(\mathcal{A}_n)$ obtenus en enlevant de $\P^d_K$ les tubes ouverts d'épaisseur 
    $|\varpi|^n$ autour des hyperplans dans $\mathcal{A}$. Cela nous amène à étudier la géométrie d'un arrangement tubulaire 
    $$X_I=\P^d_K\setminus \bigcup_{i\in I} H_i(|\varpi|^n),$$
    où $H_i(|\varpi|^n)$ est le voisinage tubulaire ouvert d'épaisseur $|\varpi|^n$ de l'hyperplan $H_i$. Nous allons supposer que ces voisinages tubulaires sont deux à deux distincts. Suivant Schneider et Stuhler, pour comprendre la géométrie de $X$, il s'agit de comprendre la géométrie des espaces de la forme 
    $$Y_J=\P^d_K\setminus \bigcap_{j\in J} H_j(|\varpi|^n)$$
    avec $J\subset I$. Le point essentiel est que les espaces $Y_J$ sont des fibrations localement triviales 
    en boules fermées au-dessus d'espaces projectifs (dont la dimension dépend de la combinatoire des hyperplans). Cela permet d'utiliser les résultats d'annulation de van der Put et nous ramène à l'étude de certains complexes de Cech relativement explicites. Pour transférer l'étude des faisceaux sur les $Y_J$ à $X_I$ nous montrons un lemme combinatoire (essentiellement basé sur la suite de Mayer-Vietoris), qui remplace la suite spectrale utilisée par Schneider et Stuhler (et dont l'étude devient assez compliquée dans notre situation). Cela permet de démontrer que les faisceaux $\Of^{(r)}$ sont acycliques sur $X_I$. Un argument basé sur le logarithme tronqué permet d'en déduire l'acyclicité du faisceau 
    $\Of^{**}=1+\Of^{++}$ des fonctions $1+f$ vérifiant $|f|<1$ sur les $X_I$. Enfin, l'étude du quotient 
    $\G_m/\Of^{**}$ fait apparaître des complexes de Cech identiques à ceux apparaissant en géométrie algébrique, ce qui permet de passer de $\Of^{**}$ à $\G_m$.

     Le paragraphe précédent explique la preuve de l'acyclicité de $\G_m$ sur les espaces $X_I$.  Le passage de ces espaces à ${\rm Int}(\mathcal{A})$ n'est pas trivial et représente en fait le coeur technique de l'article. Pour expliquer la difficulté, notons que l'on dispose d'un recouvrement Stein ${\rm Int}(\mathcal{A})=\cup_{n\geq 1} X_{I_n}$, où les $X_{I_n}$ sont des espaces du même type que ceux introduits ci-dessus, les $I_n$ étant des ensembles finis, de plus en plus grands. On en déduit une suite exacte 
     $$0\to \rrr^1\varprojlim_n \han{s-1}(X_{I_n}, \G_m)\to \han{s}({\rm Int}(\mathcal{A}), \G_m)\to 
     \varprojlim_n \han{s}(X_{I_n}, \G_m)\to 0.$$
     Pour $s>1$ cela permet de démontrer l'annulation de $\han{s}({\rm Int}(\mathcal{A}), \G_m)$, mais pour 
     $s=1$ il s'agit de démontrer que 
     $$R^1\varprojlim_n \Of^*(X_{I_n})=0.$$
    Pour cela, on se ramène à démontrer le même résultat avec les faisceaux 
    $\Of^{**}$ et $\Of^{(r)}$ à la place de $\G_m$. Le point crucial à démontrer est alors une version en dimension quelconque 
    du lemme 1.12 de \cite{GPW1}, qui permet de comprendre la flèche de restriction 
    $\Of^{**}(X_{I_{n+1}})\to \Of^{**}(X_{I_n})$. Plus précisément, par application du logarithme, nous nous ramenons à montrer pour $r$ assez petit, qu'il existe une constante 
    $c$ telle que, pour tout $n$,  $ \Of^{(r)}(X_{I_{n+c}})\subset \OC_L^{(r)}+ \varpi \Of^{(r)}(X_{I_n})$ avec $\OC_L^{(r)}$ le sous-ensemble des éléments de $L$ de norme strictement inférieure ou égale à $r$. C'est le point le plus délicat de l'article et la preuve en est assez indirecte car nous n'avons pas de description explicite des groupes $\Of^{(r)}(X_{I_n})$.

\subsection*{Notations et conventions}


Dans tout l'article, on fixe un nombre premier $p$ et une extension finie $K$ de $\Q_p$. On note $\mathcal{O}_K$ son anneau des entiers, $\varpi$ une uniformisante et $\F=\F_q$ son corps r\'esiduel. On note $C=\hat{\bar{K}}$ la complétion d'une clôture algébrique de $K$ et $\breve{K}$ la complétion de l'extension maximale non ramifiée de $K$. Soit $L\subset C$ une extension complète de $K$ susceptible de varier (par exemple $K,\breve{K},C$), d'anneau des entiers $\mathcal{O}_L$, d'idéal maximal  $\mG_L$ et de corps r\'esiduel $\kappa$. 

Soit $S$ un $L$-espace rigide analytique\footnote{Dans tout le reste de l'article, les espaces rigides analytiques et les affinoides seront supposés "à la Tate". Ce cadre sera largement suffisant pour nos applications.}. On note $\A^n_{rig, S}$ (respectivement $\P_{rig, S}^n$) l'espace affine (resp. projectif) rigide analytique de dimension relative $n$ sur $S$. Si 
 $s=(s_i)_{1\le i\le n}$ est une famille de nombres rationnels, le polydisque rigide fermé sur $S$ de polyrayon $(|\varpi|^{s_i})_i$ 
  sera noté $\B_S^n  (|\varpi|^s)$ ou $\B_S^n  (s)$ par abus. 
  L'espace $\B^n_S$ sera la boule unité et les boules ouvertes seront notées  $\mathring{\B}^n_S$ et $\mathring{\B}_S^n (s)$. Si $S$ est maintenant un schéma,  $\A^n_{zar, S}$ sera l'espace affine sur $S$ et $\P^n_{zar, S}$ l'espace projectif.
  
Si $X$ est un $L$-espace analytique réduit, on note $\Of^+_X$ le faisceau des fonctions à puissances bornées, $\Of^{++}_X$ le faisceau des fonctions topologiquement nilpotentes,  $\Of^{(r)}_X$ le faisceau des fonctions bornées strictement en norme spectrale par $r$, $\Of^{*}_{X}$ (ou bien $\G_{m,X}$) le faisceau des fonctions inversibles  et $\Of^{**}_{X}$ le faisceau $1+ \Of^{++}_X$. 
Si $X=\spg(L)$, on écrit $\OC^{(r)}_L=\Of^{(r)}_X (X)$. 
Pour tout ouvert affinoïde réduit $U\subset X$,  on munit $\Of^*_X (U)$ de la topologie induite par le plongement $\Of^*_X(U)\to\Of_X (U)^2 : f\fla  (f,f^{-1})$ (muni de la norme spectrale). 
On notera  $K(x)$ le corps valué associé au point fermé $x\in X$.



Si $X$ est un espace analytique sur $L$ (resp. un schéma), la cohomologie d'un faisceau $\Ff$ sur le site analytique (resp. de Zariski) sera notée $\han{*} (X,\Ff)$ (resp. $\hzar{*} (X,\Ff)$).  Si $\UC$ est un recouvrement de $X$ (pour une des topologies précédemment nommées), la cohomologie de Cech de $X$ pour le faisceau $\Ff$ par rapport au recouvrement $\UC$ sera notée $\hcech{*} (X,\Ff, \UC)$ et le complexe de cochaînes sera noté $\check{\CC}^* (X,\Ff,\UC)$. Pour toutes ces théories cohomologiques, quand $U\subset X$ est un ouvert de $X$, la cohomologie à  support dans le complémentaire de $U$ sera notée $\hhh^*(X,U)$.  Si $ \Lambda$ est un groupe cyclique d'ordre $N$ premier à  $p$ et $\bar{X}=X\hat{\otimes}C$, le morphisme de Kummer  sera noté $\kappa : \Of^* (X)\to  \het{1}(X,\Lambda_X)$ et $\bar{\kappa} :  \Of^* (X)\to  \het{1}(\bar{X},\Lambda_{\bar{X}})$ sera la restriction de $\Of^* (\bar{X})\to  \het{1}(\bar{X},\Lambda_{\bar{X}})$.


Enfin, nous noterons  $\left\llbracket a,b\right\rrbracket:= [a,b]\cap \Z$ quand $a,b\in \R$.

\subsection*{Remerciements}

Le présent travail a été, avec \cite{J2,J3,J4}, en grande partie réalisé durant ma thèse à l'ENS de lyon,  et a pu bénéficier de la relecture attentive de mes maîtres de thèse Vincent Pilloni et Gabriel Dospinescu qui ont beaucoup apporté à la clarté et à la rigueur de l'exposition. Je leur en suis très reconnaissant.  Je tenais aussi à remercier Najmuddin Fakhruddin pour m'avoir suggéré la preuve de \ref{propptzar},  Sophie Morel pour les discussions sur les travaux de  Lütkebohmert et le referee pour leurs précieux conseils. Enfin, le support "logistique" réalisé par Sally Gilles et Juan Esteban Rodriguez  Camargo ont rendu possible cette article. 

\section{L'espace des hyperplans $K$-rationnels} 

   On note $\HC$ l'ensemble des hyperplans $K$-rationnels dans $\P^d$.  L'ensemble $\mathcal{H}$ est profini car il s'identifie à $\P^d(K)$.






   
 %

Définissons maintenant quelques données relatives à l'ensemble $\HC$.
   Si $a=(a_0,\dots, a_d)\in C^{d+1}\backslash \{0\}$, $l_a$ désignera l'application \[b=(b_0,\dots, b_d)\in C^{d+1} \mapsto \left\langle a,b\right\rangle := \som{a_i b_i}{0\leq i\leq d}{}.\] Ainsi $\HC$ s'identifie à  $\{\ker (l_a),\; a\in K^{d+1}\backslash \{0\} \}$ et à  $\P^d (K)$.

\begin{rem}

L'application précédente permet d'identifier un hyperplan dans $\HC$ à sa droite orthogonale par dualité. Nous confondrons alors toujours un élément de $\HC$ à sa droite associée. Dans la section suivante, nous attacherons des espaces rigides à certaines parties de $\HC$ et nous pourrons décrire explicitement leur géométrie et leur combinatoire uniquement grâce aux relations linéaires sur $\OC_K$ entre les générateurs unimodulaires de ces droites.

\end{rem}
   
   Le vecteur $a=(a_i)_{i}\in C^{d+1}$ est dit unimodulaire si $|a|_{\infty}(:=\max (|a_i|))=1$. L'application 
   $a\mapsto H_a:=\ker (l_a)$ induit une bijection entre le quotient de l'ensemble des vecteurs unimodulaires 
   $a\in K^{d+1}$ par l'action évidente de $\OC_K^*$ et l'ensemble $\mathcal{H}$. 
   
   Pour $a\in K^{d+1}$ unimodulaire et $n\geq 1$, 
   on considère l'application $l_a^{(n)} $ \[b\in (\OC_{C}/\varpi^n)^{d+1}\mapsto \left\langle a,b\right\rangle\in \OC_{C}/\varpi^n\] et on note $$\HC_n =\{\ker (l_a^{(n)}),\; a\in K^{d+1}\backslash \{0\} \; {\rm unimodulaire} \}\simeq\P^d(\OC_K/\varpi^n).$$ Alors $\HC = \varprojlim_n \HC_n$ et chaque 
   $\HC_n$ est fini.


Soit $a\in K^{d+1}$ unimodulaire et $z\in \P^d (C)$. La quantité $|l_a (b)| $ ne dépend pas du choix du 
représentant unimodulaire $b$ de $z$, et ne dépend que de la classe de $a$ dans $\P^d (K)$. Cela permet de définir les  tubes fermés et ouverts de rayon $\varepsilon>0$ autour de l'hyperplan $H=\ker(l_{a_H})\in\HC$ par \[\bar{H}(\varepsilon) =\{z\in\P^d(C), |l_{a_H}(z)|\leq\varepsilon \} \et \mathring{H}(\varepsilon) =\{z\in\P^d(C), |l_{a_H}(z)|<\varepsilon \}. \]  

Les extensions des scalaires par $L$ seront notées $\bar{H}(\varepsilon)_L$ et $\mathring{H}(\varepsilon)_L$ et les complémentaires dans $\P^d_{rig,L}$ seront $\bar{H}(\varepsilon)^c_L$ et $\mathring{H}(\varepsilon)^c_L$. Il est à  noter que $\bar{H}(|\varpi|^n)$ (resp. $\mathring{H}(|\varpi|^n)$) ne dépend que de la classe de $H$ dans $\HC_{n }$ (resp. $\HC_{n+1}$). \footnote{Cela découle du fait que pour deux vecteurs $a_1$, $a_2$, on a l'identité $|l_{a_1}(z)-l_{a_2}(z)|\le |a_1-a_2|_{\infty}|z|_{\infty}$. En particulier, les inégalités $|l_{a_1}(z)|\le \varepsilon$ et $|l_{a_2}(z)|\le \varepsilon$ (resp. $|l_{a_1}(z)|< \varepsilon$ et $|l_{a_2}(z)|< \varepsilon$) sont vérifiées pour les mêmes vecteurs unimodulaires $z$ si $|a_1-a_2|_{\infty}\le \varepsilon$ (resp. $|a_1-a_2|_{\infty}< \varepsilon$).}
 
\begin{rem}
\label{remgH}
Dans la définition des tubes ouverts ou fermés, nous avons procédé à une renormalisation lorsque nous évaluons $|l_a(z)|$ sur un représentant unimodulaire de $z$.   Cette dernière dépend du choix de coordonnées initiales.   Toutefois,  les changements de variables dans $\gln_{d+1}(\OC_K)$ permutent les tubes de même rayon.  Plus précisément, si $g\in \gln_{d+1}(\OC_K)$,  $g\cdot \bar{H}(\epsilon)= \bar{(gH)}(\epsilon)$.    
\end{rem}

\section{G\'eom\'etrie des arrangements\label{sssectiongeomarr}} 

\subsection{D\'efinitions et exemples}


Pour toute collection 
$\AC$ de parties de $\P^d_{rig, K}$ on note 
$$ {\rm Int} (\AC)=\P^d_{rig, K} \backslash \bigcup_{H \in \AC} H$$
et
$${\rm Uni} (\AC)=\P^d_{rig, K} \backslash \bigcap_{H \in \AC} H.$$

Dans le cas général, ces constructions n'admettent pas forcément de structure naturelle d'espaces rigides analytiques. Toutefois, c'est le cas lorsque $\AC$ est finie et constituée de parties fermées. Nous nous intéresserons dans la suite de l'article aux exemples suivants (où est $\AC$ possiblement infini) pour lesquels une telle structure d'espace rigide existe. 

\begin{defi}\label{defarr} Une collection $\AC$ de parties de $\P^d_{rig, K} $ est appel\'ee 

$\bullet$ {\it arrangement alg\'ebrique (resp. alg\'ebrique g\'en\'eralis\'e)} (d'hyperplans $K$-rationnels) si $\AC$ est un sous-ensemble fini (resp. ferm\'e) de $\HC$. 

$\bullet$ {\it arrangement tubulaire ouvert (resp. ferm\'e) d'ordre $n$} si $\AC$ est une famille finie de voisinages tubulaires  {\it ferm\'es (resp. ouverts)} $\bar{H}( |\varpi|^n)$ (resp. $\mathring{H}( |\varpi|^n)$) avec $H\in \HC$.

\end{defi}

\begin{rem}\label{remarr}

\begin{itemize}

\item Pour simplifier l'exposition, nous avons choisi d'étudier les arrangements pour des hyperplans $K$-rationnels pour $K$ une extension finie de $\Q_p$. Certaines de ces constructions peuvent s'étendre à des corps $K$ beaucoup plus généraux. Par exemple, les arrangements tubulaires (ouverts ou fermés) peuvent être définies sur n'importe quel corps complet. Toutefois, les arguments que nous allons présenter se servent de manière cruciale de l'existence de fibrations dont la construction, que nous rappelons dans \ref{ssectionfibr}, nécessite de supposer  $K$ de valuation discrète. Pour le cas des arrangements algébriques généralisés, il est nécessaire d'imposer en plus la finitude du corps résiduel de $K$ pour que soit bien un espace rigide (cf la note \ref{notefin} et le troisième point de cette remarque). En caractéristique positive, ces constructions peuvent être transportées mutatis mutandis modulo les hypothèses sur  $K$ précédente.  En revanche, certains résultats d'annulation cohomologique que nous allons énoncer ne peuvent être prouvés dans ce cadre grâce aux méthodes de l'article. Nous renvoyons à la remarque \ref{remgenprinc} pour une discussion plus précise.

\item Se donner un arrangement tubulaire ouvert (resp. ferm\'e) d'ordre $n$ revient \`a se donner une partie (finie)\footnote{\label{notefin} L'hypothèse de finitude est ici redondante sous les conditions que nous avons imposé sur le corps $K$ car $\HC_n$ est fini pour tout $n$. Cette propriété n'est plus vrai si on raisonne sur un corps plus général dont le corps résiduel peut être infini ou qui peut ne pas être de valuation discrète.} de $\HC_{n}$ (resp. $\HC_{n+1}$). 

\item Si $m>n$, tout arrangement tubulaire ouvert (ou ferm\'e) d'ordre $m$ induit un arrangement 
tubulaire ouvert (ou ferm\'e) d'ordre $n$, appel\'e sa projection. Plus pr\'ecis\'ement, la projection d'un arrangement d\'efini par une collection de voisinages tubulaires $(\bar{H}( |\varpi|^m))_{H\in I}$ 
est l'arrangement d\'efini par la collection de voisinages tubulaires $(\bar{H}( |\varpi|^n))_{H\in I}$. 
Cela revient \`a consid\'erer la projection d'une partie de  $\HC_{m}$ (resp. $\HC_{m+1}$) sur 
 $\HC_{n}$ (resp. $\HC_{n+1}$). Cette construction s'\'etend bien s\^ur au cas d'un arrangement alg\'ebrique (resp. alg\'ebrique 
g\'en\'eralis\'e).\footnote{Dans le cas des arrangements algébriques généralisés, il est nécessaire de demander à ce que $K$ soit de valuation discrète et de corps résiduel fini pour que les projections d'ordre $n\in\N$ forment bien une famille finie de voisinages tubulaires.} 

\item Une famille d'arrangements tubulaires $(\AC_n)_n$ 
telle que l'ordre de $\AC_n$ soit $n$ 
est dite compatible si, pour tout $m>n$, $\AC_n$ est la projection de $\AC_m$.
Si $\AC\subset \HC$ est un arrangement alg\'ebrique g\'en\'eralis\'e, on construit par projection deux familles compatibles d'arrangements tubulaires ouverts (resp. ferm\'es) $(\AC_n)_n$ par projection. Cette construction a pour intérêt de fournir un recouvrement croissant ${\rm Int} (\AC) =\bigcup_n {\rm Int} (\AC_n)$ pour les arrangements algébriques généralisés $\AC$.

\item Les constructions $ {\rm Int} (\AC)$ (et ${\rm Uni} (\AC)$ lorque $\AC$ est fini) poss\`edent des structures naturelles 
d'espaces rigides analytiques sur $K$. Le seul cas non trivial est celui d'un arrangement alg\'ebrique g\'en\'eralis\'e, qui d\'ecoule du point pr\'ec\'edent. 

\item Plus pr\'ecis\'ement, l'espace ${\rm Int} (\AC)$ est un affinoïde (resp. quasi-Stein) si $\AC$ est un arrangement tubulaire fermé (resp. ouvert).

\end{itemize}
\end{rem}

\begin{ex}
L'espace symétrique de Drinfeld $\H^d_K$ est l'arrangement d'hyperplan généralisé  ${\rm Int}(\HC)$.
\end{ex}
Nous allons définir le rang d'un arrangement $\AC$, qui permettra de décrire la géométrie de ${\rm Uni} (\AC)$

\begin{defi}\label{defirgarr}

Nous donnons la notion de rang pour des parties finies de $\HC$ et de $\HC_n$. D'après la deuxième observation de \ref{remarr}, cela induit une notion de rang pour les arrangements algébriques et tubulaires ouverts ou fermés.

\begin{itemize}

\item Si $\AC\subset\HC$, on se donne pour tout $H\in\AC$ un vecteur $a_H$ unimodulaire tel que $H= \ker(l_{a_H})$. On pose $\operatorname{rg}(\AC)= \operatorname{rg}_{\OC_K}( \sum_{H \in \AC} \OC_K a_H)$. 
\item Si $\AC\subset\HC_n$, on se donne pour tout $H$ dans $\AC$, $a_H$ un vecteur unimodulaire dans $\OC_K^{d+1}/ \varpi^{n} \OC_K^{d+1}$ tel que $H= \ker(l_{a_H})$ et $\tilde{a}_H$ un relev\'e dans $\OC_K^{d+1}$. On \'ecrit\footnote{$\alpha_i$ peut être infini et dans ce cas on adopte la convention $\varpi^{\infty}=0$. } \[\sum_{H \in \AC} \OC_K \tilde{a}_H = \bigoplus_{i=0}^d \varpi^{\alpha_i} \OC_K e_i\] pour $(e_i)$ une base de $\OC_K^{d+1}$ bien choisie. On pose alors $\operatorname{rg}(\AC)= \operatorname{card}\{ i : \alpha_i < n\}$. Cette quantit\'e ne d\'epend pas des choix des $a_H$ et de leur relev\'e. Intuitivement, le rang correspond à $\operatorname{rg}(\AC)= \operatorname{rg}_{\OC_K/\varpi^{n}\OC_K}( \sum_{H \in \AC} (\OC_K/\varpi^{n}\OC_K) a_H)$.

\end{itemize}
\end{defi}

\subsection{La suite spectrale associ\'ee \`a un arrangement\label{ssectionfibr} }

Dorénavant, pour tout arrangement d'hyperplans $\AC$, nous verrons ${\rm Int} (\AC)$ et ${\rm Uni}(\AC)$ comme des $L$-espaces analytiques par extension des scalaires. Si $\hhh$ d\'esigne la cohomologie de de Rham ou la cohomologie d'un faisceau $\Ff$ sur le site \'etale ou analytique, on a par un argument g\'en\'eral \cite[§ 2, Proposition 6+Lemma 7 ainsi que les discussions qui précédent]{scst} de suites spectrales :
\begin{equation}\label{eqsuitspectgen}
E_1^{-r,s} = \bigoplus_{(H_i)_{0 \le i \le r} \in \AC^{r+1}} \hhh^s(\P_{rig, L}^d, {\rm Uni} (\{H_i\})) \Rightarrow \hhh^{s-r}(\P_{rig, L}^d, {\rm Int} (\AC)) 
\end{equation}   
o\`u $\AC$ est un arrangement alg\'ebrique, tubulaire d'ordre $n$ ouvert ou ferm\'e et 
$\hhh(X,Y)$ repr\'esente la cohomologie de $X$ \`a support dans $X\setminus Y$. 

Soit $\AC$ un arrangement (algébrique, tubulaire ouvert ou fermé) et $\BC\subset \AC$ non vide de cardinal $r+1$, nous allons donc chercher \`a d\'ecrire la g\'eom\'etrie   de ${\rm Uni} (\BC)$ suivant si $\AC$ est algébrique, tubulaire ouvert ou fermé. Si $r=0$, ${\rm Uni} (\BC)$ devient un espace affine dans le cas algébrique, une boule ouverte dans le cas tubulaire ouvert et une boule fermée dans le cas tubulaire fermé.

Supposons maintenant $r\neq 0$ et posons $t+1={\rm rg}(\BC)$. Par hypothèse, on a $t\neq 0$. 
Nous allons construire en suivant  \cite[§1, Proposition 6]{scst} une fibration $f :{\rm Uni} (\BC)\to \P^t_{rig, L}$. Les fibres seront des espaces affines dans le cas algébrique, des boules ouvertes dans le cas tubulaire ouvert et des boules fermées dans le cas tubulaire fermé. Pour chaque $H_i\in \BC$, choisissons un vecteur unimodulaire de $K^{d+1}$ de la même manière que dans \ref{defirgarr} et écrivons $M:=\som{\OC_K a_i}{0\leq i\leq  r}{}\subset M_0:=\som{\OC_K e_i}{0\leq i\leq d}{}$ où $(e_i)$ est la base canonique de $K^{d+1}$. Réalisons un changement de base similaire à  \ref{defirgarr} (licite d'après \ref{remgH}) pour obtenir des entiers positifs croissants $(\alpha_i)_{0\leq i\leq d}$ 
tels que $\alpha_0=0$ et obtenir une décomposition $M=\som{\varpi^{\alpha_i}\OC_K e_i}{0\leq i\leq d}{}\subset M_0=\som{\OC_K e_i}{0\leq i\leq d}{}$. On a alors les descriptions suivantes de ${\rm Uni} (\BC)$, avec la convention que pour la suite on choisit un repr\'esentant unimodulaire de chaque 
point $[b_0,\cdots, b_d]$, i.e. tel que $\max_{0\leq i\leq d} |b_i|=1$: 

$\bullet$ dans le cas algébrique \[{\rm Uni} (\BC)=Z^d_t:=\{z=[b_0,\cdots,b_d]\in \P_{rig, L}^d, \exists i\leq  t,  b_i\neq 0\},\] 

$\bullet$ dans le cas tubulaire  ferm\'e, 
 posons $\beta=(\beta_i)_{0\leq i\leq t}=(n-\alpha_i)_{0\leq i\leq t}$ et notons 
 \[{\rm Uni} (\BC)=X^d_t(\beta):=\{z=[b_0,\cdots,b_d]\in \P_{rig, L}^d, \exists i\leq  t, | b_i|\geq |\varpi|^{\beta_i} \},\]
 
$\bullet$ dans le cas tubulaire ouvert, 
 posons $\gamma=(\gamma_i)_{0\leq i\leq t}=(n+1-\alpha_i)_{0\leq i\leq t}$ et notons 
 \[{\rm Uni} (\BC)=Y^d_t(\gamma):=\{z=[b_0,\cdots,b_d]\in \P_{rig, L}^d, \exists i\leq  t, | b_i|> |\varpi|^{\gamma_i} \}.\]
 
La flèche   $f $ donnée par $[b_0,\cdots, b_d]\mapsto [b_0,\cdots, b_t]$ induit bien des fibrations\footnote{La flèche $f$ est bien définie sur ces espaces.} $X^d_t(\beta)\to \P_{rig, L}^t$, $Y^d_t(\gamma)\to \P_{rig, L}^t$, $Z^d_t\to \P_{rig, L}^t$. 
Soit $\VC (\beta)=\{V(\beta)_i\}$, $\mathring{\VC }(\gamma)=\{\mathring{V}(\gamma)_i\}$, $\VC=\{V_i\}$  les recouvrements admissibles de $\P_{rig, L}^t$ où \[ V(\beta)_i =\{z=[z_0,\cdots,z_t]\in \P_{rig, L}^t, \forall j\leq t, |\frac{z_i}{\varpi^{\beta_i}}|\geq |\frac{z_j}{\varpi^{\beta_j}}| \},\]
\[ \mathring{V}(\gamma)_i =\{z=[z_0,\cdots,z_t]\in \P_{rig, L}^t, \forall j\leq t, |\frac{z_i}{\varpi^{\gamma_i}}|\geq |\frac{z_j}{\varpi^{\gamma_j}}| \},\]
\[ V_i =\{z=[z_0,\cdots,z_t]\in \P_{rig, L}^t,  z_i\neq 0 \}.\] 
Alors, $X^d_t(\beta)\to \P_{rig, L}^t$ se trivialise sur $\VC (\beta)$, $Y^d_t(\gamma)\to \P_{rig, L}^t$ sur $\mathring{\VC }(\gamma)$, $Z^d_t\to \P_{rig, L}^t$ sur $\VC$ ie.
\[f^{-1} (V(\beta)_i)\cong V(\beta)_i\times \B_L^{d-t}(-\beta_i),\] 
\[f^{-1} ( \mathring{V}(\gamma)_i)\cong  \mathring{V}(\gamma)_i\times  \mathring{\B}_L^{d-t}(-\gamma_i),\]
\[f^{-1} ( V_i)\cong  V_i\times  \A^{d-t}\] par le biais de l'application \[[z_0,\cdots,z_d]\mapsto [z_0,\cdots,z_t]\times (\frac{z_{t+1}}{z_i},\cdots , \frac{z_d}{z_i}).\]
Appelons $\UC=\{U_i\}$ le recouvrement adapté (au cas algébrique, tubulaire ouvert ou fermé) et $F_i$ la fibre sur $U_i$ (soit $\B_L^{d-t}(-\beta_i)$ pour les tubulaires fermés, $\mathring{\B}_L^{d-t}(-\gamma_i)$  pour les tubulaires ouverts, $\A^{d-t}$ en algébrique). La variable sur la base $\P^t_{rig, L}$ sera notée $z=[z_0,\cdots,z_t]$ et celle de la fibre $w=(w_1,\cdots,w_{d-t})$. Sur chaque intersection\footnote{\label{fnotint}
Pour tout recouvrement $\{U_i\}_{i\in S}$ par des ouverts d'un espace rigide $X$ et toute partie finie $I\subset S$, on pourra noter pour simplifier $U_I:=\bigcap_{i\in I}U_i$. 
} $U_{\{i,j\}}$, l'application de transition rend le diagramme suivant commutatif \[
\xymatrix{ 
f^{-1} (U_{\{i,j\}}) \ar[r]^-{\sim} \ar[d]^{\rm Id}  &  U_{\{i,j\}}\times F_i \ar[d]^{{\rm Id}\times m_{ \frac{z_i}{z_j}}}  \\
f^{-1} (U_{\{i,j\}}) \ar[r]^-{\sim} & U_{\{i,j\}}\times F_j }
\] où $m_{\frac{z_i}{z_j}}$ est l'homothétie de rapport $\frac{z_i}{z_j}$. 
On écrira $f^*(\VC (\beta))=\{f^{-1}(V(\beta)_i)\}$, $f^*(\mathring{\VC }(\gamma))=\{f^{-1}(\mathring{V}(\gamma)_i)\}$, $f^*(\VC)=\{f^{-1}(V_i)\}$ les recouvrements de $X^d_t(\beta)$, $Y^d_t(\gamma)$, $Z^d_t$ obtenus.

Dans le cas algébrique, les intersections d'éléments du recouvrement $f^*(\VC)$ sont des produits de copies de $\A^1$ et de $\A^1\backslash\{0\}$. Dans le cas tubulaire fermé, les intersections sur $f^*(\VC (\beta))$ sont des produits de polycouronnes et de polydisques fermés. 

Remarquons que si $t=d$, $X^d_d(\beta)=\P^d_{rig, L}$ , la famille des $(X^d_t(\beta))_{\beta,m,t}$ contient les espaces projectifs. Enfin, il pourra être utile de renormaliser les variables de $ \P_{rig, L}^t$ et de les réécrire sous la forme  $$\tilde{z}_i =\frac{z_i}{\varpi^{\beta_i}}.$$

\begin{ex}\label{excard2}
Pour illustrer les constructions précédentes,  décrivons ici les affinoïdes ${\rm Uni}(\AC)$ lorsque $\AC$ est un arrangement tubulaire fermé  d'ordre $n\geq 1$ avec $|\AC|=2$.   Choisir $\AC$ revient  à se donner deux  hyperplans $H_{a}$ et $H_{b}$  différents  dans $\HC_{n+1}$,  ou par dualité,  deux vecteurs unimodulaires $a$ et $b$ qui n'engendrent pas la même droite dans $(\OC_{K}/\varpi^{n+1})^{d+1}$.  Quitte à réaliser un changement de variables dans $\gln_{d+1}(\OC_K)$ (cf \ref{remgH}),  on peut trouver une base $(e_i)$ de  $\OC_{K}^{d+1}$ tel que\footnote{Notons que $k$ est le plus grand entier $s$ tel que  $a\equiv b \mod \varpi^{s}$ et est par conséquent inférieur strict à $n+1$.  }
\[
\begin{cases}
e_0 = a, \;\; e_1=b & \si a\neq b \mod \varpi \\
e_0= a, \;\; e_0+\varpi^k e_1 =b &   \sinon 
\end{cases}.
\]
Nous allons raisonner  sur ce système de coordonnées. 

Dans le premier cas,  
\begin{eqnarray*}
\mathring{H}_{a}(|\varpi^n|)^{c} & = &  \{ z\in \P^{d}(C) :  \forall   j\leq d,   |z_j| \leq |\varpi^{-n}z_0|   \} \\ 
\mathring{H}_{b}(|\varpi^n|)^{c} & = &  \{ z \in \P^{d}(C) :  \forall   j\leq d,   |z_j| \leq |\varpi^{-n}z_1|   \}. 		
\end{eqnarray*}
Ainsi ${\rm Uni}(\AC)= \{ z \in  \P^{d}(C) : \exists i \leq 1  \mbox{ tel que } \forall j \leq d \;\;  |z_i| \geq  |\varpi^nz_j|   \}= X_{1}^{d}(n,n)$.  
Sous cette presentation,  le recouvrement $V(\beta)$ avec $\beta= (n,n)$ correspond au recouvrement $\{ \mathring{H}_a(|\varpi^n|)^c,  \mathring{H}_b(|\varpi^n|)^c\}$.

Dans le second cas,  
\begin{eqnarray*}
{\rm Uni}(\AC) & = &   \{z \in \P^{d}(C):  \forall j\leq d,  |z_j| \leq |\varpi^{-n}z_0| \mbox{ ou } \forall j\leq d,  |z_j|\leq |\varpi^{-n}(z_0+\varpi^k z_1)| \} \\ 
		& = &  \{z \in \P^{d}(C):  \forall j\leq d,  |z_0| \geq |\varpi^{n}z_j| \mbox{ ou } \forall j\leq d,  | z_1 |\geq |\varpi^{n-k}z_j| \} \\
		& = & X_1^{d}(n,n-k).
\end{eqnarray*}
En particulier,  $V(\beta)$ avec $\beta=(n,n-k)$ est constitué des éléments 
\begin{gather*}
\mathring{H}_a(|\varpi^n|)^c= \{ z\in \P^d(C):  \forall i\leq d,  |{z_i}| \leq |\varpi^{-n}{z_0}|\} \\
\mathring{H}_{e_1}(|\varpi^{k-n}|)^c =\{ z \in  \P^d(C):  \forall i\leq d,  |{z_i}|\leq |\varpi^{n-k}{z_1}| \}. 
\end{gather*}

Notons que les vecteurs $a$ et   $b$ jouent un rôle symétrique. En les échangeant, on obtient cette présentation pour l'union :
\begin{eqnarray*}
{\rm Uni}(\AC) & = &   \{z \in \P^{d}(C):  \forall j\leq d,  |z_0+\varpi^k z_1| \geq |\varpi^{n}z_j| \mbox{ ou } \forall j\leq d,  | z_1 |\geq |\varpi^{n-k}z_j| \} \\
		& = & X_1^{d}(n,n-k).
\end{eqnarray*}
Sous cette présentation, le recouvrement $V(\beta)$ associé est alors constitué des éléments \[\{\mathring{H}_b(|\varpi^n|)^c, \mathring{H}_{e_1}(|\varpi^{n-k}|)^c \}.\]
\end{ex}

\section{Enoncés et stratégies\label{sssectionenoncestrat}}

  Dans les \'enonc\'es ci-dessous nous utiliserons syst\'ematiquement la topologie analytique.
  Nous allons prouver (voir \ref{theoacyco+arralg}, \ref{lemo+constzdt}, \ref{coroorptxdt},  \ref{theoacyco+arrtubfer}, \ref{theocohoanarralggen}):

\begin{theo}\label{theoacyco+}

\begin{enumerate}
\item Les espaces projectifs, les fibrations $Z_t^d$, les arrangements tubulaires ferm\'es et les arrangements alg\'ebriques g\'en\'eralis\'es ${\rm Int}(\AC)$ sont $\Of^{(r)}$-acycliques. 

\item Les sections globales de $\Of^{(r)}$ sur les arrangements alg\'ebriques g\'en\'eralis\'es ${\rm Int}(\AC)$ sont constantes.

\item La cohomologie de $\Of^{(r)}$ sur 
$X^d_t(\beta)$ est concentrée en degrés $0$ et $t$. Quand $t\neq 0$, les sections globales sont constantes et la cohomologie en degré $t$ s'identifie au complété $p$-adique de 
\[\drt{\hzar{t}(\P^t_{zar,\OC_L},\Of(-|\alpha|))\otimes \OC_L^{(r)}}{\alpha\in \N^{d-t}\\|\alpha|\ge t+1}{}.\]

\end{enumerate}
\end{theo}

   Voir \ref{theoacyco+arralg}, \ref{lemo+constzdt},  \ref{coroostst}, \ref{theocohoanarralggen}  pour le r\'esultat suivant: 

\begin{theo}\label{theoacycostst}

\begin{enumerate}
\item Les espaces projectifs, les fibrations $Z_t^d$, les arrangements tubulaires ferm\'es et les arrangements alg\'ebriques g\'en\'eralis\'es ${\rm Int}(\AC)$ sont $\Of^{**}$-acycliques.

\item Les sections globales de $\Of^{**}$ sur les arrangements alg\'ebriques g\'en\'eralis\'es ${\rm Int}(\AC)$ sont constantes.

\item La cohomologie de $\Of^{**}$ sur 
 $X^d_t(\beta)$ est concentrée en degrés $0$ et $t$. Les sections globales sont constantes quand $t\neq 0$.
\end{enumerate}
\end{theo}

  Le r\'esultat suivant est une combinaison de \ref{theocohoangmxdt}, \ref{theocohoangmarrtubfer},   \ref{theoostarrtubfer}, \ref{theocohoanarralggen}. 

\begin{theo}\label{theoacycgm}

\begin{enumerate}
\item Les espaces projectifs vérifient : \[\han{k} (\P^t_{rig,L},\G_m)=\begin{cases}
L^* &{\rm si}\ k=0\\
\Z &{\rm si}\ k=1\\
0 &{\rm sinon}.
\end{cases}
\]
\item  La fibration $f: X^d_t(\beta)\to\P^t_{rig,L}$ induit une décomposition en produit direct pour 
$s>0$:
\[\han{*} (X_t^d (\beta), \G_m)\cong \han{*} (X_t^d (\beta), \Of^{**})\times \han{*} (\P^t_{ rig, L}, \G_m).\]
De plus, les sections globales sont constantes quand $t\neq 0$.
\item  Les arrangements tubulaires fermés ${\rm Int}(\AC)$ sont $\G_m$-acycliques et 
\[\Of^*({\rm Int} (\AC))/L^*\Of^{**}({\rm Int} (\AC))= \Z[\AC]^0.\] 
\item  
Les arrangements algébriques généralisés ${\rm Int}(\AC)$ sont $\G_m$-acycliques et \[\Of^*({\rm Int} (\AC))/L^*=\Z\left\llbracket \AC\right\rrbracket^0.\] 
\end{enumerate}
\end{theo}


Pour obtenir ces résultats, il faut d'abord calculer la cohomologie de $\Of^{(r)}$ sur $X^d_t(\beta)$ (point 3. de \ref{theoacyco+}) grâce  aux résultats d'acyclicité de Van der Put (voir \cite{vdp} ou \ref{theovdp}) et au calcul de la cohomologie de Cech sur le recouvrement $f^*(\VC(\beta))$ (cf \ref{coroorptxdt}). Plus précisément, la fibration $f$ permet de relier le complexe de Cech de $X^d_t(\beta)$ aux complexes de $\P^t_{rig,L}$ pour les faisceaux tordus $\Of^{(r)}(k)$ (point 1. de \ref{theoacyco+} et \ref{lemo+ptrig} pour un énoncé plus fin). Le résultat se déduit de la cohomologie des faisceaux $\Of(k)$ sur les espaces projectifs algébriques sur $\OC_L$. 

Le résultat d'acyclicité pour les arrangements tubulaires fermés découle de l'annulation de la cohomologie de $X^d_t(\beta)$  à partir du degré $t+1$ et de l'argument combinatoire \ref{lemmayervietitere} qui remplace la suite spectrale \eqref{eqsuitspectgen}.

Le transfert des énoncés sur $\Of^{(r)}$ à  $\Of^{**}$ résulte d'un argument sur les logarithmes tronqués \ref{lemoroetun}. Pour le faisceau $\G_m$, on calcule encore la cohomologie de Cech des fibrations $X^d_t(\beta)$ sur le recouvrement $f^*(\VC(\beta))$. Mais on a pour tout $I\subset \llbracket 0,t\rrbracket$ une décomposition \[\Of^* (f^{-1}(V(\beta)_I))=L^* \Of^{**} (f^{-1}(V(\beta)_I))\times \langle \frac{z_i}{z_j}:i,j\in I\rangle_{\Z\modut}\]
qui induit les décompositions  de la cohomologie du point 2. de  \ref{theoacycgm} (cf \ref{theocohoangmxdt}) et celle des sections inversibles dans \ref{theoacycgm} point 3. (cf \ref{coroostst}). Nous notons aussi que le complexe induit par les facteurs directs $\langle\frac{z_i}{z_j}:i,j\in I\rangle_{\Z\modut}$ est celui apparaissant en géométrie algébrique, ce qui permet d'établir le point 1. de \ref{theoacycgm} par comparaison. D'après ce qui précède, on sait que la cohomologie de $X^d_t(\beta)$ s'annule à partir du degré $t+1$ ce qui nous donne l'acyclicité des arrangements tubulaires fermés pour $\G_m$ toujours grâce au lemme combinatoire \ref{lemmayervietitere}.


Pour les arrangements algébriques généralisés $\AC$, ils peuvent être approximés par des arrangements tubulaires fermés compatibles $\AC_n$ d'ordre $n$. On dispose d'une suite exacte pour tout $s>0$
\[0\to \rrr^1\varprojlim_n \han{s-1}({\rm Int} (\AC_{n}),\G_m)\to \han{s} ({\rm Int} (\AC),\G_m)\to \varprojlim_n \han{s}({\rm Int} (\AC_{n}),\G_m)\to 0.\]
Le calcul dans le cas tubulaire fermé induit l'annulation de la cohomologie de $\G_m$ pour  $s>1$ et l'égalité
$\han{1} ({\rm Int} (\AC),\G_m)= \rrr^1\varprojlim_n\Of^*({\rm Int} (\AC_{n}))$.
Il s'agit alors de prouver $\rrr^1\varprojlim_n\Of^*({\rm Int} (\AC_{n}))=0$. D'après la décomposition \ref{theoacycgm} point 3. et le lemme \ref{propr1lim}, il suffit de trouver une constante  $c$ indépendante  de $n$ pour laquelle on a l'inclusion 
\[\Of^{(r)}({\rm Int}(\AC_{n}))\subset \OC_L^{(r)}+\varpi\Of^{(r)}({\rm Int}(\AC_{n-c})).\]
Pour établir cette identité, on raisonne par récurrence sur le rang de ${\rm Int}(\AC_{n})$ et on se ramène à montrer (cf \ref{corodecsimp} et \ref{lemobsarrtubferor} pour voir que cette condition est bien suffisante) que l'image de la flèche
\[\han{{\rm rg}(\AC_n)-1}({\rm Uni}(\AC_n),\Of^{(r)})\to \han{{\rm rg}(\AC_n)-1}({\rm Uni}(\AC_{n-1}),\Of^{(r)}) \]
est contenu dans $\varpi\han{{\rm rg}(\AC_n)-1}({\rm Uni}(\AC_{n-1}),\Of^{(r)})$. Grâce à \ref{theoacyco+} point 1., on peut voir ces groupes de cohomologie comme des sous-groupes des fonctions bornés de polycouronnes (cf \ref{remhtanxdt}) dont les flèches de restriction sont explicites et bien comprises d'après    \ref{lemobspolycourdisqrel}. Le résultat découle alors de ce cas particulier.


\'Etudier la cohomologie des arrangements tubulaires fermés via 
les espaces $X^d_t (\beta )$ est semblable à la stratégie de \cite{scst}. 
Par exemple, le point 3. de \ref{theoacyco+} mimique l'axiome d'homotopie de \cite[§2]{scst}. S'intéresser à $\Of^{(r)}$ puis à $\Of^{**}$ et enfin à $\G_m$ rappelle la preuve de \cite[théorème 3.25]{vdp}. L'argument de passage à la limite s'inspire de \cite[sous-section 1.2]{GPW1}.

\begin{rem}\label{remgenprinc}

\begin{itemize}
\item Tous les calculs sur la cohomologie de Cech qui apparaissent dans la preuve de ces résultats peuvent être réalisés quand $K$ est de caractéristiques $p$. Cette observation suggère que ces résultats peuvent aussi être vérifiées dans ce cadre. Toutefois, nous ne voyons pas comment adapter la preuve du lemme cruciale \ref{lemoroetun} dans ce cas. En particulier, nous ne sommes pas en mesure d'établir un analogue du théorème \ref{theoacycostst} ainsi que de sa conséquence théorème \ref{theoacycgm} en caractéristique $p$.

\item Nous avons choisi de ne pas étudier les arrangements tubulaires ouverts même si certains raisonnements semblent pouvoir être adaptés. En fait, ces arrangements peuvent s'écrire comme des unions croissantes d'arrangements tubulaires fermés où l'on s'est autorisé des ordres rationnels. Si l'on pouvait établir des résultats similaires pour ces généralisations (la combinatoire des unions est similaire dans ce cadre), on pourrait alors grâce à la suite exacte montrer que la cohomologie des faisceaux étudiés est concentrés en degrés $0$ et $1$ (grâce à \eqref{eq:suitexrlimh}). Un travail futur pourrait chercher à savoir si on a encore l'acyclicité pour les intersections  des arrangements tubulaires ouverts lorsque le corps $L$ est sphériquement clos.
\end{itemize}
\end{rem}
    Tous nos calculs utilisent de manière cruciale le résultat suivant de van der Put \cite[th. 3.10, th. 3.15, th. 3.25]{vdp}, décrivant la cohomologie de quelques affinoïdes simples. 
    
\begin{theo}[Van der Put]\label{theovdp}

Les produits de polycouronnes et polydisques fermés\footnote{Plus généralement les polydisques généralisés au sens de \cite[3.9]{vdp}} n'ont pas de cohomologie analytique en degré strictement positif pour:

\begin{enumerate}
\item les faisceaux constants 
\item le faisceaux  $\Of^{(r)}$ 
\item le faisceau $\Of^+$ en dimension $1$ 
\item le faisceau  $\G_m$ 
\end{enumerate}

\end{theo}

\begin{rem} Un théorème de Bartenwerfer \cite{bart} affirme que les boules fermées sont aussi acycliques pour le faisceau $\Of^+$, en toute dimension. Nous ne savons pas si ce résultat est encore vrai pour les couronnes  (sauf en dimension $1$, comme indiqué). Si c'était le cas, beaucoup des résultats à suivre 
pourraient aussi être énoncés pour $\Of^+$.

\end{rem}

\section{Cas des arrangements algébriques  \label{paragrapho+arralg}}

Nous traitons d'abord le cas des arrangements alg\'ebriques. L'énoncé suivant est un analogue de \ref{lemobsarrtubferadd} dans le cas particulier des polycouronnes où le résultat est direct. Pour la généralisation \ref{lemobsarrtubferadd}, nous nous ramenons à ce cas particulier grâce au point technique \ref{theocohoano+xdt}. Une fois ce résultat établi, les méthodes dans le cas algébrique sont relativement similaires au cas algébrique généralisé. 
\begin{lem}\label{lemobspolycourdisq}
On considère le produit de polycouronnes et de polydisques suivant \[U=\{x=(x_1 ,\cdots, x_d)\in \A^d_{rig, L} : \forall i, |\varpi|^{-r_i} \ge|x_i|\ge|\varpi|^{s_i}\}\] où $(r_i)_i$ et $(s_i)_i$ sont des entiers\footnote{On s'autorisera $s_i=\infty$ pour les facteurs isomorphes à  une boule fermée.}. De même, on considère  \[V=\{x=(x_1 ,\cdots, x_d)\in \A^d_{rig, L} : \forall i, |\varpi|^{-r_i-1} \ge|x_i|\ge|\varpi|^{s_i+1}\}.\] 
Alors, on a \[\Of^{+}(V)\subset \OC_L+\varpi\Of^{+}(U)\]



\end{lem}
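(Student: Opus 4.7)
The plan is to expand an element $f \in \Of^+(V)$ as a Laurent series and compare the spectral norms on $V$ and $U$ term by term. Since both spaces are products of couronnes and disques, every $f \in \Of(V)$ admits a unique convergent expansion $f = \sum_{\alpha \in \Z^d} a_\alpha x^\alpha$, where by convention $\alpha_i \geq 0$ for those indices $i$ with $s_i = \infty$ (the polydisque factors). A standard computation on such affinoïdes shows that the spectral norm coincides with the Gauss norm $\|f\|_V = \sup_\alpha |a_\alpha|\, \rho_V(\alpha)$, where $\rho_V(\alpha) := \sup_{x \in V} |x^\alpha|$; the hypothesis $f \in \Of^+(V)$ is thus equivalent to $|a_\alpha|\, \rho_V(\alpha) \leq 1$ for every $\alpha$.

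Next I would compute $\rho_V(\alpha)$ and $\rho_U(\alpha)$ coordinate by coordinate. For each $i$, the factor $|x_i|^{\alpha_i}$ is maximized on $V$ at $|x_i| = |\varpi|^{-r_i-1}$ when $\alpha_i \geq 0$ and at $|x_i| = |\varpi|^{s_i+1}$ when $\alpha_i < 0$, with the analogous formulas on $U$ with the $+1$ shift removed. Taking the ratio gives the clean identity $\rho_U(\alpha)/\rho_V(\alpha) = |\varpi|^{|\alpha|_1}$, where $|\alpha|_1 = \sum_i |\alpha_i|$.

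Finally, I separate off the constant term of the Laurent expansion. From $\|f\|_V \leq 1$ applied to $\alpha = 0$ we get $|a_0| \leq 1$, so $a_0 \in \OC_L$. For every $\alpha \neq 0$ we have $|\alpha|_1 \geq 1$, hence $|a_\alpha|\, \rho_U(\alpha) \leq |\varpi| \cdot |a_\alpha|\, \rho_V(\alpha) \leq |\varpi|$. Summing over $\alpha \neq 0$ this means $(f - a_0)/\varpi$ has spectral norm at most $1$ on $U$, i.e.\ defines an element of $\Of^+(U)$. Therefore $f \in a_0 + \varpi\, \Of^+(U) \subset \OC_L + \varpi\, \Of^+(U)$, which is the required inclusion.

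No real obstacle is expected: the argument reduces to the elementary observation that shrinking the radii of a polycouronne by a multiplicative factor of $|\varpi|$ gains exactly one power of $\varpi$ per non-trivial monomial. The one mild point to check is that the same analysis applies uniformly whether a given coordinate corresponds to a couronne factor ($s_i$ finite) or to a disque factor ($s_i = \infty$), which is clear since in the latter case only $\alpha_i \geq 0$ occur and the computation of $\rho_V$ and $\rho_U$ only involves the upper radii.
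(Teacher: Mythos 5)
Votre preuve est correcte et suit essentiellement la même démarche que celle de l'article : développement de Laurent unique sur $V$, identification de la norme spectrale avec la norme de Gauss, et constat que le rapport des normes d'un monôme non constant sur $U$ et sur $V$ vaut $|\varpi|^{|\alpha|_1}\le|\varpi|$, ce qui permet d'isoler le terme constant. L'article formule cela en termes de bases de Banach de monômes normalisés, mais le calcul sous-jacent est identique.
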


\begin{proof}
La description des espaces $U$ et $V$ nous fournit un système de coordonnées commun $(X_i)_i$. La famille de mon\^omes $(\prod_{i :\nu_i\ge 0} (\varpi^{r_i} X_i)^{\nu_i} \prod_{j : \nu_j< 0} (\frac{\varpi^{s_j}}{X_j})^{ -\nu_j})_{\nu \in E}$ forme une base de Banach \footnote{Rappelons ici cette notion. Pour cela, considérons $A$ une $L$-algèbre (ou une $\OC_L$-algèbre) normée complète et $\ell_{\infty}^0(A)$ l'ensemble des suites à valeurs dans $A$ dont le terme général tends vers $0$ muni de la norme $\left\Vert (a_n)_n\right\Vert_\infty := \max_n |a_n|$. Une base de Banach d'un $A$-module normé complet $M$ est une famille $(e_n)_n\in M^{\N}$ telle que l'application $(a_n)_n\in\ell_{\infty}^0(A)\mapsto\sum a_n e_n \in M$ est bien définie et réalise une isométrie entre les espaces $\ell_{\infty}^0(A)$ et $M$.} de $\Of(U)$ avec 

\[ E:=\{\nu\in \Z^d:\nu_j\ge 0 \text{ si } s_j =\infty \}.\] Il en est de m\^eme pour la famille $(\prod_{i :\nu_i\ge 0} (\varpi^{r_i+1} X_i)^{\nu_i} \prod_{j: \nu_j\le 0} (\frac{\varpi^{s_j+1}}{X_j})^{ \nu_j})_{\nu \in E}$ sur $\Of(V)$. Mais on remarque que pour tout $\nu$ (avec $|\nu|=\sum_{j} \nu_j$)
\[ \prod_{i :\nu_i\ge 0} (\varpi^{r_i+1} X_i)^{\nu_i} \prod_{j: \nu_j\le 0} (\frac{\varpi^{s_j+1}}{X_j})^{ \nu_j}= \varpi^{| \nu|} \prod_{i :\nu_i\ge 0} (\varpi^{r_i} X_i)^{\nu_i} \prod_{j: \nu_j\le 0} (\frac{\varpi^{s_j}}{X_j})^{\nu_j}. \]
Il est alors ais\'e de voir que si une section \`a puissance born\'ee de $V$ n'a pas de terme constant, sa restriction est dans $\varpi \Of^+(U)$. 

\end{proof}

Nous avons une version relative de ce résultat 

\begin{lem}\label{lemobspolycourdisqrel}
Soit $Y$ un affinoïde sur $L$ et $U$,$V$ les affinoïdes définis dans le lemme précédent, alors on a
\[\Of^+(Y\times V)\subset \Of^+(Y)+\varpi \Of^+(Y\times U).\]
\end{lem}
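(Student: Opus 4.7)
Le plan est de reprendre presque mot à mot la preuve du lemme \ref{lemobspolycourdisq}, en remplaçant partout le corps de base $L$ par la $L$-algèbre affinoïde $\Of(Y)$, grâce à l'identification $\Of(Y\times U)=\Of(Y)\hat{\otimes}_L\Of(U)$ (et de même pour $V$). L'idée centrale est que la famille de monômes $f_\nu=\prod_{i:\nu_i\geq 0}(\varpi^{r_i+1}X_i)^{\nu_i}\prod_{j:\nu_j<0}(\varpi^{s_j+1}/X_j)^{-\nu_j}$, $\nu\in E$, fournit, par changement de base, une base de Banach de $\Of(Y\times V)$ en tant que $\Of(Y)$-module normé pour la norme spectrale; de même la famille $e_\nu$ (sans les $+1$) donne une base de Banach de $\Of(Y\times U)$ sur $\Of(Y)$.

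Je développerai alors une section $f\in\Of^+(Y\times V)$ sous la forme $f=\sum_\nu a_\nu f_\nu$ avec $a_\nu\in\Of(Y)$ et $a_\nu\to 0$. Comme chaque $f_\nu$ est de norme spectrale $1$ sur $Y\times V$, la propriété de base de Banach entraîne $\|f\|_{\mathrm{sp}}=\max_\nu\|a_\nu\|_{\mathrm{sp}}\leq 1$, et donc $a_\nu\in\Of^+(Y)$ pour tout $\nu$. J'invoquerai ensuite l'identité clé $f_\nu=\varpi^{|\nu|}e_\nu$ (avec $|\nu|\geq 1$ dès que $\nu\neq 0$) déjà utilisée dans la preuve précédente, pour réécrire
$$f=a_0+\varpi\sum_{\nu\neq 0}(a_\nu\varpi^{|\nu|-1})\,e_\nu.$$
Le terme constant $a_0$ appartient à $\Of^+(Y)$; quant à la série restante, ses coefficients restent dans $\Of^+(Y)$ et tendent vers zéro, donc elle converge dans $\Of^+(Y\times U)$. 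Cela fournira immédiatement l'inclusion $\Of^+(Y\times V)\subset \Of^+(Y)+\varpi\Of^+(Y\times U)$.

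Le seul point qui demande un peu d'attention est la vérification que la notion de base de Banach se préserve par le changement de base $Y\times{-}$, ce qui se ramène à la description standard des fonctions sur une polycouronne relative au-dessus d'un affinoïde et au fait que la norme spectrale sur $\Of(Y\times V)$ coïncide avec la norme de sup des coefficients dans cette base. Il n'y a donc pas de difficulté essentielle: le passage du cas absolu au cas relatif est un argument essentiellement notationnel.
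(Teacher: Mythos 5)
Votre preuve est correcte et suit essentiellement la même démarche que celle de l'article : on remplace le développement en base de Banach sur $L$ par sa version relative au-dessus de $\Of(Y)$ (c'est exactement le contenu du lemme \ref{lemcourrel} de l'article, qui établit l'unicité du développement et l'identité $\Vert\sum_\alpha f_\alpha Z^\alpha\Vert_{Y\times U}=\max_\alpha\Vert f_\alpha\Vert_Y\Vert Z^\alpha\Vert_U$), puis on compare les monômes adaptés à $V$ et à $U$ via la relation $f_\nu=\varpi^{|\nu|}e_\nu$. Le seul point que vous signalez comme restant à vérifier — la préservation de la base de Banach par le changement de base $Y\times{-}$ et la compatibilité avec la norme spectrale — est précisément ce que l'article démontre (en supposant $Y$ réduit, pour que $\Of^+(Y)$ soit un anneau de définition), donc il n'y a pas d'écart de fond.
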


\begin{proof}
C'est le même argument que pour \ref{lemobspolycourdisq} et cela s'obtient en comparant les développements uniques en série sur les deux espaces $U$,$V$ :
\begin{lem}\label{lemcourrel}

Soit $Y=\spg(A)$ un affinoïde réduit sur $L$ et $U$ comme précédemment. Toute section de $Y\times U$ admet une écriture unique
\[\sum_{\nu}f_\nu Z^\nu\ {\rm avec}\ f_{\nu}\in \Of(Y)\ {\rm et}\ \left\Vert f_\nu\right\Vert_Y \left\Vert Z^\nu\right\Vert_U\to 0\]
où la variable $\nu$ parcourt l'ensemble des vecteurs\footnote{ie. $\nu\in E$ en reprenant les notations de la preuve de \ref{lemobspolycourdisq}} de $\Z^d$ tel que $\nu_i\ge 0$ quand $s_i=\infty$. De plus, la norme spectrale vérifie l'identité $\left\Vert \sum_{\nu}f_\nu Z^\nu\right\Vert_{Y\times U}=\max_\nu\left\Vert f_\nu\right\Vert_Y\left\Vert Z^\nu\right\Vert_U$
\end{lem}

\begin{proof}

On commence par établir deux identités classiques sur $\ell_{\infty}^0 (\OC_L)$. Donnons-nous une $\OC_L$-algèbre plate normée complète, les flèches naturelles\footnote{\label{footop} Décrivons la topologie sur le module $B[\frac{1}{\varpi}]$. Étant donné une norme sur $B$ définissant la topologie sur cette algèbre, cette dernière peut être prolongée de manière unique en une norme sur $B[\frac{1}{\varpi}]$ qui vérifie la relation $\left\Vert b\right\Vert_{B[\frac{1}{\varpi}]}=\left| \varpi^{-k}\right| \left\Vert b \varpi^k\right\Vert_B$ pour $b\in \varpi^{-k}B\subset B[\frac{1}{\varpi}]$ (ne dépend pas de $k$). Cela permet de définir la topologie sur $B[\frac{1}{\varpi}]$ où toute base de voisinage de $0$ dans $B$ définit aussi une base de voisinage de $0$ dans $B[\frac{1}{\varpi}]$. La complétude de $B[\frac{1}{\varpi}]$ se déduit alors de celle de $B$.} $\ell_{\infty}^0 (\OC_L)\to \ell_{\infty}^0(B)$ et $\ell_{\infty}^0(B)\to \ell_{\infty}^0(B[\frac{1}{\varpi}])$ induisent des isomorphismes  :
\begin{equation}\label{eq:ell0}
\ell_{\infty}^0 (\OC_L)\hat{\otimes} B\cong \ell_{\infty}^0(B) \et \ell_{\infty}^0(B[\frac{1}{\varpi}])\cong \ell_{\infty}^0(B)[\frac{1}{\varpi}].
\end{equation}
Nous commençons par la deuxième. Vu comme des sous-groupes de $B[\frac{1}{\varpi}]^\N$, on a une inclusion évidente entre les modules $\ell_{\infty}^0(B)[\frac{1}{\varpi}]\subset \ell_{\infty}^0(B[\frac{1}{\varpi}])$. Prouvons celle en sens opposé. Une suite $(u_n)_n$ dont le terme général tend vers $0$ dans $B[\frac{1}{\varpi}]$ est à valeurs dans $B$ à partir d'un certain rang $N_0$. En particulier, on peut trouver un entier $k$ assez grand tel que $\varpi^k u_n \in B$ pour $n\le N_0$ et  $(\varpi^ku_n)_n\in \ell_{\infty}^0(B)$ ce qui prouve l'inclusion voulue.

Intéressons-nous maintenant à la première identité. On a une flèche naturelle $\ell_{\infty}^0 (\OC_L){\otimes} B\to \ell_{\infty}^0 (B)$ et nous voulons montrer que c'est un isomorphisme lorsqu'on complète. Les suites à support fini $\ell^c (\OC_L)\subset  \ell_{\infty}^0 (\OC_L)$  et $\ell^c (B)\subset  \ell_{\infty}^0 (B)$ forment des sous-groupes denses  qui vérifient $\ell^c(B)\cong \ell^c (\OC_L){\otimes} B $ et cette isomorphisme s'étend par continuité. 

Revenons à la situation de l'énoncé. Par hypothèse de réduction sur $Y$, $A^+$ est un anneau de définition\footnote{ie.  $A^+$ est une $\OC_L$-algèbre topologique plate et $p$-adiquement complète tel que $A$ est homéomorphe à $A^+[1/\varpi]$(voir \eqref{footop} pour la topologie sur $A^+[1/\varpi]$).} de $A$. Ainsi, on a
\begin{eqnarray*}
\Of(Y\times U) & = &  (A^+ \widehat{\otimes}_{\OC_{L}} \Of^+(U) )[\frac{1}{p}] \\
\end{eqnarray*}
Les identités précédentes \eqref{eq:ell0} montrent qu'une base de Banach de $\Of^+(U)$ définit aussi une base Banach  sur $\Of(Y\times U)$. On en déduit l'existence et l'unicité de l'écriture en somme voulue. 

Prouvons l'égalité pour la norme spectrale. D'après la discussion précédente, développons une section sous la forme \footnote{On rappelle l'égalité $\left\Vert Z^\nu\right\Vert=\prod_{i :\nu_i\ge 0} \varpi^{\nu_i r_i} \prod_{j : \nu_j< 0} \varpi^{ -\nu_j s_j}$ pour $\nu\in E$.} $f=\sum_{\nu}f_\nu \frac{Z^\nu}{\left\Vert Z^\nu\right\Vert}$ et appelons $\pi :Y\times U\to Y$ la projection. Pour tout $y\in Y(C)$, 
la norme spectrale sur $\pi^{-1}(y)$ est donnée par $\max_\nu(|f_\nu(y)|)$ (voir le cas d'un corps). La norme spectrale totale  vérifie $\left\Vert f\right\Vert_{Y\times U}=\max_y \left\Vert f\right\Vert_{\pi^{-1}(y)}=\max_y \max_\nu(|f_\nu(y)|)=\max_\nu(|f_\nu|)$ et on en déduit l'égalité voulue.

\end{proof}

\end{proof}

Nous avons aussi un résultat similaire pour les fonctions inversibles des couronnes relatives.

\begin{lem}\label{claimostrelpolicd}
Soient $I \subset \left\llbracket 1,n\right\rrbracket$, $(s_i)_{i\in I}$, $(r_i)_{i\in I}$ des nombres rationnels tels que $s_i\ge r_i$ pour tout $i$, $\mathrm{Sp}(A)$ un $L$-affino\"ide réduit et connexe, et soit $D$ la polycouronne 
\[ \{ (x_1, \dots , x_n) \in \B_L^n \; | \; |\varpi|^{s_i} \leq|x_i| \leq |\varpi|^{r_i} \text{ si } i \in I \}. \]
Alors \[ \Of^{*}(D \times \mathrm{Sp}(A)) = \Of^*( \mathrm{Sp}(A)) \Of^{**}(D \times \mathrm{Sp}(A)) \times \left\langle x_i: \; i \in I\right\rangle_{\Z\modut}. \]
Le résultat reste vrai si la polycouronne $D$ est ouverte.
\end{lem}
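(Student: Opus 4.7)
Le plan est d'exploiter le développement de Laurent pour ramener le problème à une analyse ponctuelle combinée avec la connexité de $\mathrm{Sp}(A)$. D'abord, j'appliquerais le lemme~\ref{lemcourrel} à $f \in \Of^{*}(C \times \mathrm{Sp}(A))$ pour obtenir l'écriture unique
\[ f = \sum_{\alpha \in E} f_\alpha\, x^\alpha, \quad f_\alpha \in \Of(\mathrm{Sp}(A)), \quad E = \{\alpha \in \Z^n : \alpha_j \geq 0 \text{ si } j \notin I\}, \]
avec $\|f\|_{\sup} = \max_\alpha \|f_\alpha\|_Y\, \|x^\alpha\|_C$.

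Je traiterais ensuite le cas absolu $A = L$, classique via le polygone de Newton sur une polycouronne : $f \in \Of^*(C)$ équivaut à l'existence d'un unique $\alpha_0 \in \Z^I$ (prolongé par $0$ hors de $I$) tel que $|f_{\alpha_0}|\, |x^{\alpha_0}| > |f_\alpha|\, |x^\alpha|$ strictement sur $C$ pour tout $\alpha \neq \alpha_0$, ce qui donne $f = f_{\alpha_0}\, x^{\alpha_0}(1 + g)$ avec $g \in \Of^{++}(C)$.

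Pour le cas relatif, je fixerais $y \in \mathrm{Sp}(A)$ point fermé, dont la restriction $f_y$ est inversible sur $C_{K(y)}$, ce qui fournit par le cas absolu un $\alpha(y) \in \Z^I$ caractérisé par $|f_{\alpha(y)}(y)|\, (\rho^{(j)})^{\alpha(y)} > |f_\alpha(y)|\, (\rho^{(j)})^{\alpha}$ strictement en chaque sommet $\rho^{(j)}$ de $C$ et pour tout $\alpha \neq \alpha(y)$. Cette condition étant ouverte en $y$ par continuité des $|f_\alpha(\cdot)|$, l'application $y \mapsto \alpha(y)$ est localement constante, puis constante sur $\mathrm{Sp}(A)$ connexe, de valeur commune notée $\alpha_0$. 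Comme $f_{\alpha_0}$ ne s'annule en aucun point fermé, elle est inversible dans l'anneau réduit $A$. Par le principe du maximum (la norme spectrale est atteinte sur le bord de Shilov, fini pour $A$ réduit), la dominance stricte ponctuelle devient uniforme : $\|f_\alpha / f_{\alpha_0}\|_Y\, \|x^{\alpha - \alpha_0}\|_C < 1$ pour $\alpha \neq \alpha_0$, d'où $f/(f_{\alpha_0}\, x^{\alpha_0}) - 1 \in \Of^{++}$. L'unicité de la décomposition résulte de celle du développement de Laurent : un produit $a \cdot u \cdot \prod x_i^{n_i}$ avec $a \in \Of^*(Y)$, $u \in \Of^{**}$ a pour terme dominant $a \prod x_i^{n_i}$, et l'égalité à $1$ force $n_i = 0$, $a = 1$, $u = 1$.

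Pour la polycouronne ouverte, je l'écrirais comme réunion croissante $C = \bigcup_m C_m$ de polycouronnes fermées ; les exposants $n_i$ sont compatibles par restriction (par unicité sur chaque $C_m$), et la décomposition globale se déduit par passage à la limite. Le point le plus délicat sera le passage de la dominance stricte ponctuelle à une dominance uniforme en norme spectrale, qui nécessite vraiment la finitude du bord de Shilov pour transformer un supremum en maximum atteint, ainsi que le contrôle simultané de l'infinité d'indices concurrents via la décroissance $\|f_\alpha\|_Y\|x^\alpha\|_C\to 0$ héritée du développement en base de Banach.
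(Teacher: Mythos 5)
Votre preuve est correcte et suit, dans ses grandes lignes, la même stratégie que celle de l'article : développement en base de Banach via le lemme \ref{lemcourrel}, décomposition ponctuelle en chaque point fermé de $\spg(A)$, constance locale de l'exposant puis connexité, et épuisement par des polycouronnes fermées pour le cas ouvert. Les différences portent sur le cas absolu et sur le passage du ponctuel à l'uniforme. L'article se ramène par récurrence sur $\dim C$ à une couronne de dimension $1$ et règle le cas d'un corps par l'approximation par des fractions rationnelles de \cite[2.2.4]{frvdp} ; vous invoquez directement le critère du monôme strictement dominant pour une polycouronne de dimension quelconque. Ce critère est vrai, mais en plusieurs variables il n'est pas aussi « classique » que vous le suggérez : sa démonstration naturelle est précisément la récurrence sur la dimension que fait l'article (couronne relative au-dessus d'une polycouronne plus petite) ou un argument de polytope de Newton ; tel quel, votre texte escamote cette étape. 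Par ailleurs, l'ouverture de la condition définissant $\alpha(y)$ et la dominance uniforme exigent toutes deux de se ramener à un nombre fini d'indices concurrents via la décroissance $\Vert f_\alpha\Vert_Y\Vert x^\alpha\Vert_C\to 0$ et une minoration de $|f_{\alpha_0}|$ (puisque $f_{\alpha_0}\in A^*$) : vous le signalez, et votre recours au principe du maximum et à la finitude du bord de Shilov est une alternative légitime à la construction explicite du voisinage $V$ avec les constantes $N$ et $\epsilon$ de l'article. Enfin, pour la couronne ouverte, les constantes $a_m$ de chaque $C_m$ ne sont déterminées que modulo $A^{**}$ ; le recollement demande de vérifier $a_m/a_{m'}\in A^{**}$, c'est-à-dire l'égalité $\bigcap_m A^*\Of^{**}(C_{m,A})=A^*\Of^{**}(C_A)$ que l'article détaille et que votre « passage à la limite » comprime un peu vite.
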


\begin{proof}

Supposons vrai le cas d'une couronne fermée et montrons le résultat dans le cas d'une couronne ouverte.    On se donne un recouvrement croissant $D=\bigcup_n D_n$ par des couronnes fermées,  et on note pour simplifier $D_A=\spg A \times D$ (idem  pour $D_{n,A}$).  On a alors par hypothèse:
\[\Of^* (D_A)=\bigcap_n \Of^* (D_{n,A})=\left( \bigcap_n A^*\Of^{**}(D_{n,A})\right)\times  \left\langle x_i: \; i \in I\right\rangle_{\Z\modut}. \]
 Il s'agit d'établir $\bigcap_n A^*\Of^{**}(D_{n,A})=A^*\Of^{**}(D_A)$.


Prenons $u$ dans cette intersection et écrivons $u=\lambda_n(1+h_n)$ dans chaque $A^*\Of^{**}(D_{n,A})$. Fixons $n_0\in\N$, pour tout $n>n_0$, on observe
\[\frac{\lambda_n}{\lambda_{n_0}}=\frac{1+h_{n_0}}{1+h_n}\in \Of^{**}(D_{n_0,A})\cap A^*=A^{**}, \]
donc $\frac{u}{\lambda_{n_0}}=\frac{\lambda_n}{\lambda_{n_0}}(1+h_n)\in \bigcap_n \Of^{**}(D_{n,A})=\Of^{**}(D_A)$ et ainsi $u\in \lambda_{n_0}\Of^{**}(D_A)$. L'autre inclusion étant claire, on en déduit le résultat pour les couronnes ouvertes.  

On suppose maintenant la couronne $D$  fermée.  Par récurrence sur $n=\dim D$,  on se ramène au cas $n=1$ et à  la distinction $I=\{1\}$ ou $I=\emptyset$.   En effet, en dimension supérieur, $D$ se décompose comme un produit de couronnes fermées  $D=D' \times D''$ obtenu en projetant sur la dernière coordonnée pour $D''$,   et  sur  les autres pour $D'$.   On a par hypothèse de récurrence sur $D''$:
\[
\Of^*(D_A)= \Of^*(D'_A) \Of^{**}(D_A)  \times \langle x_{i} \; : \; i \in I \cap \{n\}  \rangle_{\Z \modut}. 
\]
Par hypothèse de récurrence pour $D'$ on a   $\Of^*(D'_A)= A^* \Of^{**}(D'_A) \times \langle   x_i \; : \;  i\in I\et i< n  \rangle_{\Z \modut}$.  On en déduit le résultat voulu.

 Supposons $\dim D=1$ et notons $x$ la variable sur $D$. Commençons par le cas où $A$ est  une extension complète du corps $L$. Étant donné une fonction $f$ inversible sur $D_A$, on peut trouver, d'après \cite[2.2.4]{frvdp}, une fraction rationnelle $g$ n'ayant aucun pôle ni zéro sur $D_A(\hat{\bar{A}})\subset \hat{\bar{A}}$ telle que $f/g\in \Of^{**}(D_A)$. Il est alors suffisant de prouver l'existence d'un entier relatif $k$ et d'une constante $\lambda\in A^*$ telle que $\frac{g}{\lambda x^k}\Of^{**}(D_A)$. Écrivons $g$ en un produit de monôme $c\prod_{m\in M} (x-m)^{\alpha_m}$ avec $ M\subset \hat{\bar{A}}$, et décomposons l'ensemble en $M=M^+\amalg M^-$ avec \[M^+=\{m\in M: |m|> |\varpi|^{r_1}\}\et M^-=\{m\in M: |m|< |\varpi|^{s_1}\}.\] Comme le groupe de Galois  absolu de $A$ agit par isométrie sur $\hat{\bar{A}}$, on a $\prod_{m\in M^+} m^{\alpha_m}\in A$. De plus, on observe les relations 
 \[(x-m)/(-m)=1-x/m\in \Of^{**}(D_{\hat{\bar{A}}}) \text{ si } m\in M^+,\]
 \[(x-m)/x=1-m/x\in \Of^{**}(D_{\hat{\bar{A}}}) \text{ sinon } m\in M^-.\]
On pose alors $k:=\sum_{m\in M^-} \alpha_m$ et $c\prod_{m\in M^+} (-m)^{\alpha_m}\in A$ de telle manière que \[ \frac{g}{\lambda x^k}\in \Of^{**}(D_{\hat{\bar{A}}})\cap \Of(D_A)=\Of^{**}(D_A)\] ce qui établit le cas d'un corps.

Revenons au cas général et montrons qu'il découle du cas particulier des corps.      Soit $u$ une section inversible de $D_A$,  alors pour tout $z\in \spg(A)$ on a une décomposition \begin{equation}
\label{eqDecomposition} u(z)=\lambda_z (1+h_z) x^{\beta_{z}}\in \Of^* (\spg(K(z))\times D)=\Of ^*(D_z)\end{equation} avec $\lambda_z \in K(z)^*$, $h_z\in \Of^{++}(D_z)$ et $\beta_{z}\in \Z$. 
   Si $I=\emptyset$, on a $\beta_{z}=0$ pour tout $z$.  Sinon, nous montrons que la fonction $z\mapsto\beta_{z}$ est continue sur $\spg A$,  d'où localement constante.   Soit  $z_0\in \spg A$ fixé, quitte à  multiplier $u$ par $x^{-\beta_{z_0}}$, on peut supposer $\beta_{z_0}=0$.  On écrit $u$ comme une somme grâce à \ref{lemcourrel}
\[
a_0+\sum_{\nu> 0} a_{\nu} \left(\frac{x}{\varpi^{r}}\right)^{\nu}+ \sum_{\nu>0} a_{-\nu} \left( \frac{\varpi^{s}}{x}\right)^{\nu} =a_0+\tilde{u}
\]
avec  $a_{\nu}\to 0$ pour le filtre des parties finies.    Si $I=\emptyset$, on a $a_\nu=0$ si $\nu<0$.  Notons que la décomposition $\sum_{\nu} a_{\nu}(z_{0})x^{\nu}= \lambda_{z_0}(1+ h_{z_0}) $ entraîne 
\begin{equation}\label{eq:decost}
a_{0}(z_0) \in \lambda_{z_0} K(z_0)^{**} \et a_{\nu}(z_0)\in \lambda_{z_0}K(z_0)^{++}
\end{equation}
 pour tout $\nu \neq 0$.   On peut trouver un voisinage affinoide $U$ de $z_0 $ dans $\spg A$ où $\lambda_{z_0}$ se relève en un élément inversible $\tilde{\lambda}\in \Of^*(U)$.        Soit $N>0$ tel que $a_{\nu} \in  \tilde{\lambda} \Of^{++}(U)$ pour tout $|\nu|>N$ et on fixe $\epsilon<1$ dans $p^{\Q}$ tel que $\left| \frac{a_0(z_0)}{\lambda_{z_0}}-1\right| \leq \epsilon$ et $\left| \frac{a_{\nu}(z_0)}{\lambda_{z_0}} \right|\leq \epsilon$ pour tout $|\nu| \leq N$.  Considérons l'ouvert affinoide $V$ de $U$ donné par 
\[V:=\{z\in U : |\frac{a_0(z)}{\tilde{\lambda}}-1|\leq \varepsilon \et |\frac{a_{\nu}(z)}{\tilde{\lambda}}|\leq \varepsilon \ \ \  \forall |\nu|\leq N\}.\] 
  Alors $z_0\in V$ pour $\varepsilon$ assez proche de $1$ d'après \eqref{eq:decost}. On a dans ce cas  $a_{0}\in  \Of^*(V)$ avec $|a_0|_{V}=|\tilde{\lambda}|$ et $\frac{a_{\nu}}{\tilde{\lambda}},\frac{a_{\nu}}{a_0}$ sont dans $\Of^{++}(V)$ pour tout $\nu \in \Z$.     On en déduit que la restriction de $u$ à la couronne  $D_{V}=V\times D$ s'écrit sous la forme $u= a_0(1+\frac{\tilde{u}}{a_0})$ avec $a_0 \in \Of^*(V)$ et $\frac{\tilde{u}}{a_0}\in \Of^{++}(D_V)$.  Cela montre  par unicité de $\beta_z$ dans (\ref{eqDecomposition}),  que  pour tout $z\in V$  on a $\beta_{z}=0$ comme voulu.  Ainsi,  $z\mapsto \beta_{z}$ est  constante sur les ouverts d'un recouvrement admissible, et donc constante par connexité de $\spg A$.

Supposons maintenant que $\beta_{z}=0$ pour tout $z\in \spg A$.  L'argument précédent montre que pour tout $z\in \spg A$ on a $a_0(z)\neq 0$ et $\frac{\tilde{u}}{a_0(z)}\in \Of^{++}(D_z)$.   Donc,  on a $a_0 \in A^*$ et $\frac{\tilde{u}}{a_0}\in \Of^{++}(D_{A})$,  ce qui donne la  décomposition voulue 
\[
u= a_0(1+\frac{\tilde{u}}{a_0})\in A^* \Of^{**}(D_{A}). 
\]
\end{proof}

Le résultat intermédiaire \ref{lemobspolycourdisqrel} est utile au vu du point technique général suivant: 

\begin{prop}\label{propr1lim}

Soit $X=\bigcup_n U_n=\bigcup_n \spg(A_n)$ une réunion croissante de $L$–affinoïdes. Supposons l'existence d'une constante  $c$ indépendante de $n$ tel que

\begin{equation}\label{eq:reso+}
\Of^+(U_{n+c})\subset \OC_L+\varpi\Of^+(U_n).
\end{equation}

Alors les sections globales  des faisceaux $\Of^+$, $\Of^{(r)}$, $\Of^{**}$ et $L^*\Of^{**}$  sont constantes et on a 
\[\rrr^1 \limp_n \Ff(U_n)=0\]
pour $\Ff=\Of^+,\Of^{(r)},\Of^{**},L^*\Of^{**}$.

\end{prop}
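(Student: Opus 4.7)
La stratégie consisterait à traiter d'abord le cas du faisceau $\Of^+$, dont les autres cas se déduiront. L'étape clé serait d'itérer l'hypothèse pour établir par récurrence sur $k\ge 1$ l'inclusion
$$\Of^+(U_{n+kc})|_{U_n}\subset \OC_L+\varpi^k \Of^+(U_n).$$
Pour $f\in \Of^+(U_{n+kc})$, on appliquerait l'hypothèse de récurrence au rang $k-1$ (à l'indice $n+c$) pour obtenir $f|_{U_{n+c}}=\lambda+\varpi^{k-1} g$ avec $\lambda\in\OC_L$ et $g\in \Of^+(U_{n+c})$, puis l'hypothèse de base à $g$ pour conclure via $g|_{U_n}=\mu+\varpi h$.

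À $k$ fixé, cette itération montrerait que la tour $(\Of^+(U_n)/\varpi^k)_n$ satisfait la condition de Mittag-Leffler d'image stable $\OC_L/\varpi^k\OC_L$, donnant $\varprojlim_n \Of^+(U_n)/\varpi^k \simeq \OC_L/\varpi^k$ et $R^1\varprojlim_n \Of^+(U_n)/\varpi^k = 0$. Comme les $U_n$ sont des affinoides réduits, $\Of^+(U_n)$ est $\varpi$-adiquement complet et séparé, donc $\Of^+(U_n) \simeq \mathbf{R}\varprojlim_k \Of^+(U_n)/\varpi^k$ dans la catégorie dérivée. En commutant les deux $\mathbf{R}\varprojlim$ (légitime par un argument de type Fubini dérivée), on obtiendrait
$$\mathbf{R}\varprojlim_n \Of^+(U_n) \simeq \mathbf{R}\varprojlim_k\mathbf{R}\varprojlim_n \Of^+(U_n)/\varpi^k \simeq \mathbf{R}\varprojlim_k (\OC_L/\varpi^k) \simeq \OC_L,$$
concentré en degré $0$, ce qui établirait les deux assertions pour $\Of^+$.

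Pour les autres faisceaux, l'encadrement de la forme $\varpi^N\Of^+\subset \Of^{(r)}\subset \varpi^{-N}\Of^+$ pour $N$ assez grand permettrait de refaire la même analyse $\varpi$-adique et d'obtenir $\Of^{(r)}(X)=\OC_L^{(r)}$ avec annulation du $R^1\varprojlim$. Pour $\Of^{**}=1+\Of^{++}$, on appliquerait le logarithme tronqué \ref{lemoroetun} pour identifier un sous-faisceau $1+\varpi^a\Of^+$ convenable à $\varpi^a\Of^+$ (additivement) et se ramener au cas de $\Of^{(r)}$ déjà traité ; le quotient $\Of^{**}/(1+\varpi^a\Of^+)$, de nature torsion bornée, se traiterait séparément. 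Enfin, pour $L^*\Of^{**}$, la suite exacte $1\to \Of^{**}\to L^*\Of^{**}\to L^*/\OC_L^{**}\to 1$, dont le quotient est un faisceau localement constant à valeurs dans le groupe discret $L^*/\OC_L^{**}$, permettrait de conclure via la longue suite exacte des $\mathbf{R}\varprojlim$.

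L'obstacle principal semble être l'application soigneuse du logarithme et le contrôle du quotient $\Of^{**}/(1+\varpi^a\Of^+)$ pour le passage du cas additif au cas multiplicatif ; l'argument $\varpi$-adique sous-jacent est relativement standard une fois l'itération effectuée correctement.
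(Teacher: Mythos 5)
Pour les faisceaux additifs $\Of^+$ et $\Of^{(r)}$, votre itération de l'hypothèse puis l'argument \og Mittag-Leffler modulo $\varpi^k$ + Fubini des limites dérivées \fg{} est correcte et constitue une variante légitime de la démarche du papier, qui préfère un lemme général sur les groupes topologiques complets filtrés (\ref{lemgenrlim}) où le cobord est construit explicitement comme une série télescopique convergente ; les deux routes exploitent la même complétude $\varpi$-adique et sont essentiellement équivalentes. (Notez tout de même que l'encadrement $\varpi^N\Of^+\subset\Of^{(r)}\subset\varpi^{-N}\Of^+$ ne suffit pas tel quel : il faut itérer l'hypothèse pour retomber dans $\varpi\Of^{(r)}$ et vérifier que la constante produite est bien dans $\OC_L^{(r)}$, ce que fait \ref{proprlimo+o++}.)

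En revanche, votre traitement de $\Of^{**}$ contient une lacune réelle. Le quotient $\Of^{**}/(1+\varpi^a\Of^+)$ n'est \emph{pas} de torsion bornée : pour $f\in\Of^{++}(U)$ de norme $s<1$ arbitrairement proche de $1$, le plus petit $m$ tel que $(1+f)^{p^m}\in 1+\varpi^a\Of^+$ croît sans borne avec $s$, puisqu'il faut au moins $s^{p^m}\le|\varpi|^a$. Le système projectif de ces quotients et son $\rrr^1\limp$ ne sont donc pas contrôlés par votre esquisse, et c'est précisément là que réside toute la difficulté que vous reportez. Le papier évite entièrement le logarithme à cet endroit : sur un affinoïde on a $\Of^{++}=\Of^{(1)}$, donc l'hypothèse transférée à $\Of^{(r)}$ avec $r=1$ fournit, pour $1+\varpi^k f$ avec $f\in\Of^{++}(U_n)$, une constante $\lambda\in\mG_L$ telle que $f-\lambda\in\varpi\Of^{++}(U_{n-1})$, et la renormalisation \emph{multiplicative}
\[
\frac{1+\varpi^k f}{1+\varpi^k\lambda}=1+\varpi^k\,\frac{f-\lambda}{1+\varpi^k\lambda}\in 1+\varpi^{k+1}\Of^{++}(U_{n-1})
\]
montre que la filtration $\bigl(1+\varpi^i\Of^{++}(U_n)\bigr)_i$ vérifie telle quelle les hypothèses du lemme général, avec $H=1+\mG_L$. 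C'est cette division par la constante $1+\varpi^k\lambda$ --- l'analogue multiplicatif de la soustraction --- qui manque à votre argument ; une fois qu'on l'a, le cas de $L^*\Of^{**}$ s'en déduit comme vous l'indiquez (le quotient $L^*/(1+\mG_L)$ étant un système constant surjectif).
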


Avant de montrer ce résultat clé, commençons par quelques commentaires sur les hypothèses de l'énoncé et sur les foncteurs dérivés de la limite projective. La preuve sera en fait une application du lemme plus général \ref{lemgenrlim}.

Il sera utile d'observer que les conclusions de la proposition seront encore vrais quand on remplace le faisceau $\Of^{+}$ par $\Of^{++}$ et $\OC_L$ par $\mG_L$ dans l'équation \eqref{eq:reso+}. En fait cela découle de l'observation suivante :

\begin{prop}\label{proprlimo+o++}

Soit $X=\bigcup_n U_n=\bigcup_n \spg(A_n)$ une réunion croissante de $L$–affinoïdes. Soit $c$ un entier, $0< r$, on a les implications suivantes :
\[\forall n> 0,\Of^+(U_{n+c})\subset \OC_L+\varpi\Of^+(U_n)\Rightarrow \forall n> 0,\Of^{(r)}(U_{n+2c})\subset \OC^{(r)}_L+\varpi\Of^{(r)}(U_n),\]
\[\forall n> 0,\Of^{(r)}(U_{n+c})\subset \OC_L^{(r)}+\varpi\Of^{(r)}(U_n)\Rightarrow \forall n> 0,\Of^+(U_{n+2c})\subset \OC_L+\varpi\Of^+(U_n).\]
\end{prop}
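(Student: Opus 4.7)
La stratégie de la preuve consiste à itérer l'hypothèse deux fois (ce qui produit un facteur $\varpi^2$), puis à passer entre les faisceaux $\Of^+$ et $\Of^{(r)}$ par multiplication scalaire par une puissance convenable de $\varpi$. Pour tout $r>0$, il existe un unique entier $k\in\Z$ tel que $|\varpi|^{k+1}<r\le|\varpi|^{k}$ (et symétriquement un unique $k_{0}$ avec $|\varpi|^{k_{0}}<r\le|\varpi|^{k_{0}-1}$); ces entiers dictent l'échelle naturelle entre les deux sheaves.

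Pour la première implication, en itérant deux fois l'hypothèse on obtient
\[
\Of^{+}(U_{n+2c})\subset\OC_{L}+\varpi^{2}\Of^{+}(U_{n}).
\]
Étant donné $f\in\Of^{(r)}(U_{n+2c})$, on a $\varpi^{-k}f\in\Of^{+}(U_{n+2c})$ puisque $|\varpi^{-k}f|<r/|\varpi|^{k}\le 1$. L'inclusion itérée fournit donc $\varpi^{-k}f=a+\varpi^{2}g$ avec $a\in\OC_{L}$ et $g\in\Of^{+}(U_{n})$, et l'on pose $a'=\varpi^{k}a$, $h=\varpi^{k+1}g$, de sorte que $f=a'+\varpi h$. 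La vérification $a'\in\OC_{L}^{(r)}$ repose sur $|a|\le\max(|\varpi^{-k}f|,|\varpi|^{2}|g|)$: si le premier terme domine alors $|a'|\le|f|<r$; si le second domine, alors $|a'|\le|\varpi|^{k+2}<|\varpi|^{k+1}<r$. L'appartenance $h\in\Of^{(r)}(U_{n})$ est immédiate puisque $|h|\le|\varpi|^{k+1}<r$.

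Pour la seconde implication on procède de manière duale. Étant donné $f\in\Of^{+}(U_{n+2c})$, on a $\varpi^{k_{0}}f\in\Of^{(r)}(U_{n+2c})$, et l'hypothèse itérée donne $\varpi^{k_{0}}f=a+\varpi^{2}g$ avec $a\in\OC_{L}^{(r)}$ et $g\in\Of^{(r)}(U_{n})$. En posant $a'=\varpi^{-k_{0}}a$ et $h=\varpi^{1-k_{0}}g$, on obtient $f=a'+\varpi h$, et il reste à estimer les normes. L'inégalité ultramétrique appliquée à $a=\varpi^{k_{0}}f-\varpi^{2}g$ donne dans tous les cas $|a'|\le\max(1,|\varpi|^{2-k_{0}}r)\le 1$, en utilisant $|\varpi|^{2-k_{0}}r\le|\varpi|<1$ grâce à la définition de $k_{0}$. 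Par ailleurs, $|h|<|\varpi|^{1-k_{0}}r\le 1$, donc $h\in\Of^{++}(U_{n})\subset\Of^{+}(U_{n})$.

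Le point clé, et sans doute l'obstacle principal à vérifier soigneusement, est que le facteur $\varpi^{2}$ issu de la double itération est précisément ce qu'il faut pour absorber l'écart entre la borne d'échelle naturelle $|\varpi|^{k}$ (qui vérifie $|\varpi|^{k}\ge r$) et la borne stricte voulue $r$ sur le terme constant après remise à l'échelle. Une seule itération ne produirait qu'un facteur $\varpi$, insuffisant pour garantir que $a'\in\OC_{L}^{(r)}$ (resp. $\in\OC_{L}$) dans le cas général; c'est pour cette raison précise que le décalage uniforme $2c$ apparaît dans la conclusion, indépendamment de la valeur de $r$.
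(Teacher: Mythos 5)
Votre preuve est correcte et suit essentiellement la même démarche que celle de l'article : on itère deux fois l'hypothèse pour obtenir un facteur $\varpi^{2}$, dont une puissance de $\varpi$ sert ensuite à passer entre $\Of^{+}$ et $\Of^{(r)}$. La seule différence est présentationnelle — l'article se ramène d'emblée au cas normalisé $|\varpi|<r\le 1$ en multipliant par une puissance de $\varpi$, alors que vous gardez $r$ général et portez explicitement le facteur $\varpi^{k}$ (avec vérification des normes par l'inégalité ultramétrique), ce qui revient au même.
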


\begin{proof}
Les  preuves des deux implications sont quasiment identiques et nous ne traiterons que la première. De plus, quitte à multiplier  par une puissance de $\varpi$, on peut supposer que $|\varpi|< r\le 1$. 

Prenons  $f$ dans $\Of^{(r)}(U_{n+2c})$ et donc dans $\Of^{+}(U_{n+2c})$ par hypothèses sur $r$. En appliquant deux  fois l'hypothèse, on a la chaîne d'inclusion
\[\Of^{+}(U_{n+2c})\subset \OC_L+\varpi\Of^{+}(U_{n+c})\subset \OC_L+\varpi^2\Of^{+}(U_{n})\subset \OC_L+\varpi\Of^{(r)}(U_{n}).\] Ainsi, la fonction $f$ s'écrit $f=\lambda+\varpi\tilde{f}$ avec $\tilde{f}\in \Of^{(r)}(U_{n})$ et $\lambda\in \OC_L$. En particulier, $\lambda=f-\varpi\tilde{f}\in \OC_L \cap \Of^{(r)}(U_{n})=\OC_L^{(r)}$ et on en déduit la décomposition voulue.
\end{proof}

Décrivons les quelques propriétés des foncteurs dérivés de la limite projective que nous allons utiliser. Soit  $A$ un anneau\footnote{Au vu de la propriété d'invariance décrite dans \cite[Remarque 1.10.]{jen}), on pourra toujours supposer $A=\Z$}, $I$ un ensemble ordonné filtré, la catégorie des systèmes projectifs en $A$-modules indexés par $I$ est abélienne et possède suffisamment d'objets injectifs (cf \cite[Paragraphe 1]{jen}), et le foncteur « limite projective » admet des foncteurs dérivés à droite que l'on notera $\rrr^i \limp_{j\in I}$.

Dans toute la suite, nous n'étudierons que des systèmes projectifs sur $I=\N$. Un des résultats les plus importants dans ce cas est l'annulation de la plupart des foncteurs dérivés (\cite[Théorème 2.2]{jen}) :
\begin{equation}\label{eq:rilim}
\forall i\ge 2, \rrr^i \limp_{n}=0.
\end{equation}
On peut de plus calculer le premier foncteur dérivé (d'après \cite[Remarque après le Théorème 2.2]{jen}) et ce dernier s'inscrit dans une suite exacte :
\begin{equation}\label{eq:exprrlim}
0\to \limp_n M_n \to \prod_n M_n \fln{\delta}{} \prod_n M_n \to \rrr^1 \limp_n M_n \to 0.
\end{equation}
avec $\delta((m_n)_n)=(m_n - \varphi_{n+1}(m_{n+1}))_n $ où  $\varphi_{n+1} :M_{n+1}\to M_n $ est une des fonctions de transition du système projectif. Par abus, les éléments de $\prod_n M_n$ seront appelés cocycles et ceux dans  ${\rm Im} \delta$ seront des cobords.

Précisons la situation dans laquelle nous allons appliquer ces résultats. Prenons $X$ un espace rigide et $\UC=\{U_n\}$ un recouvrement admissible croissant par des ouverts affinoides et $\Ff$ un faisceau sur $X$. En explicitant le complexe de Cech sur ce recouvrement, on obtient des identifications grâce à la suite exacte \eqref{eq:exprrlim}
\[\hcech{0}(X,\UC,\Ff)\cong \limp_n \Ff(U_n) \et \hcech{1}(X,\UC,\Ff)\cong \rrr^1\limp_n \Ff(U_n). \]
On peut aussi exprimer la cohomologie de $X$ en fonction de celle des ouverts $U_n$. Plus précisément, la composition des foncteurs $\Gamma$, $\limp_n$ nous fournit une suite spectrale 
\[E_2^{i,j}=\rrr^i \limp_n \han{j}(U_n,\Ff_{|U_n})\Longrightarrow \han{i+j}(X,\Ff)\]
qui dégénère d'après le résultat d'annulation \eqref{eq:rilim}. On obtient une suite exacte pour tout $s$ (avec pour convention $\han{-1}(U_n,\Ff_{|U_n})=0$)
\begin{equation}\label{eq:suitexrlimh}
0\to \rrr^1 \limp_n \han{s-1}(U_n,\Ff_{|U_n})\to \han{s}(X,\Ff) \to  \limp_n \han{s }(U_n,\Ff_{|U_n})\to  0.
\end{equation}
  Comme dans le raisonnement précédent, on peut encore interpréter cette suite exacte comme résultant de la dégénérescence de la suite spectrale de Cech sur le recouvrement $\UC$ grâce à \eqref{eq:rilim}.

Nous allons maintenant prouver l'annulation de $\rrr^1 \limp_n M_n$ pour des systèmes projectifs $(M_n)_n$ particuliers.

\begin{lem}\label{lemgenrlim}

Soit une suite décroissante de groupes abéliens complets $(G_n)_n$ dont la topologie est induite par des bases de voisinage par des sous groupes ouverts $(G^{(i)}_n)_i$ avec $G_n=G^{(0)}_n$. 
Supposons  $G^{(i)}_{n+1}\subset G^{(i)}_n$ pour tous $i,n$ (en particulier les inclusions sont continues).

S'il existe un sous-groupe $H\subset \bigcap_n G_n$ fermé dans chaque $G_n$(ie. $H= \bigcap_i H+ G^{(i)}_n$ pour tout $n$) vérifiant
\begin{equation}\label{eqres}
G^{(i)}_{n+c}\subset H+ G^{(i+1)}_n.
\end{equation}
pour une constante  $c$ indépendante de $i,n$ alors  \[\bigcap_n G_n=\limp_n G_n=H \et \rrr^1 \limp_n G_n=0.\]

\end{lem}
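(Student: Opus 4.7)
Le plan se décompose en plusieurs étapes. \emph{Première étape : établir $\bigcap_n G_n = H$.} L'inclusion $H \subset \bigcap_n G_n$ est donnée. Réciproquement, étant donné $g \in \bigcap_n G_n$ et un entier $n_0$ fixé, on a $g \in G_{n_0+ic}$ pour tout $i \geq 0$. En itérant $i$ fois la relation \eqref{eqres} (et en utilisant que $H+H=H$), on obtient la chaîne d'inclusions
\[ G_{n_0+ic}^{(0)} \subset H + G_{n_0+(i-1)c}^{(1)} \subset H + G_{n_0+(i-2)c}^{(2)} \subset \cdots \subset H + G_{n_0}^{(i)}. \]
Donc $g \in H + G_{n_0}^{(i)}$ pour tout $i$, d'où $g \in H$ par la condition de fermeture $H = \bigcap_i (H+G_{n_0}^{(i)})$. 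Les morphismes de transition étant les inclusions $G_{n+1} \subset G_n$, on en tire aussi $\varprojlim_n G_n = \bigcap_n G_n = H$.

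\emph{Seconde étape : réduction via le quotient par $H$.} Pour établir $\rrr^1 \varprojlim_n G_n = 0$, je considère la suite exacte courte de systèmes projectifs
\[ 0 \to H \to G_n \to \bar{G}_n \to 0 \]
où $\bar{G}_n := G_n/H$ est muni de la filtration quotient $\bar{G}_n^{(i)} := (G_n^{(i)}+H)/H$. La fermeture de $H$ dans $G_n$ assure que cette filtration sépare les points de $\bar{G}_n$, et la complétude de $G_n$ entraîne celle de $\bar{G}_n$ par un argument standard de relèvement d'une suite de Cauchy extraite. L'hypothèse \eqref{eqres} se traduit, dans $\bar{G}_n$, par $\bar{G}_{n+c}^{(i)} \subset \bar{G}_n^{(i+1)}$, d'où par itération $\bar{G}_m \subset \bar{G}_n^{(\lfloor(m-n)/c\rfloor)}$ pour tout $m \geq n$. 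La suite exacte longue des foncteurs $\rrr^i \varprojlim$, combinée à l'annulation \eqref{eq:rilim} pour $i \geq 2$, réduit alors l'énoncé aux annulations séparées de $\rrr^1 \varprojlim_n \bar{G}_n$ et $\rrr^1 \varprojlim_n H$.

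\emph{Troisième étape : les deux annulations.} Pour le système constant $(H)_n$ à morphismes identiques, la surjectivité de $\delta$ dans \eqref{eq:exprrlim} est immédiate : étant donné $(\eta_n) \in \prod_n H$, la suite $\nu_n := -\sum_{k=0}^{n-1} \eta_k$ satisfait $\nu_n - \nu_{n+1} = \eta_n$, donc $\rrr^1 \varprojlim_n H = 0$. Pour $(\bar{G}_n)_n$, étant donné $(\bar{h}_n) \in \prod_n \bar{G}_n$, je pose
\[ \bar{g}_n := \sum_{m \geq n} \bar{h}_m \in \bar{G}_n. \]
Cette série converge car, vu via l'inclusion $\bar{G}_m \subset \bar{G}_n$, le terme général $\bar{h}_m$ appartient à $\bar{G}_n^{(\lfloor(m-n)/c\rfloor)}$ : les sommes partielles forment une suite de Cauchy qui converge par complétude de $\bar{G}_n$. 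Le télescopage $\bar{g}_n - \bar{g}_{n+1} = \bar{h}_n$ conclut. Le point le plus délicat à justifier soigneusement sera la compatibilité des structures topologiques sous le passage au quotient, notamment la complétude séparée de $\bar{G}_n$ pour la filtration quotient ; une fois ceci acquis, l'argument d'itération et la sommation convergente se déroulent de manière routinière.
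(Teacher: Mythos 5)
Votre preuve est correcte et repose sur la même idée analytique que celle de l'article : la série télescopique $\sum_{m\ge n}$ des termes d'un cocycle converge parce que, modulo $H$, l'hypothèse \eqref{eqres} itérée force ces termes à tendre vers $0$ dans la filtration. La différence est d'ordre organisationnel. L'article reste dans $G_n$ : il choisit explicitement des approximants $h_m\in H$ tels que $f_m-h_m\in G_n^{(k)}$, fait converger $\sum_{m\ge n}(f_m-h_m)$ en n'utilisant que la complétude de $G_n$, puis corrige la partie dans $H$ par le même télescopage élémentaire que votre troisième étape. Vous effectuez cette séparation de façon fonctorielle, via la suite exacte $0\to H\to G_n\to G_n/H\to 0$ et la suite exacte à six termes des $\rrr^i\varprojlim$ ; c'est plus structuré, mais cela vous oblige à vérifier que $\bar G_n=G_n/H$ est séparé et complet pour la filtration quotient $(G_n^{(i)}+H)/H$ — point que vous identifiez à juste titre comme le seul délicat, et qui est effectivement vrai (la séparation est exactement l'hypothèse de fermeture $H=\bigcap_i(H+G_n^{(i)})$, et la complétude s'obtient par relèvement d'une suite extraite, la complétude de $G_n$ faisant converger la somme des relèvements). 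L'approche de l'article évite cette vérification au prix d'une construction de suites un peu plus artisanale ; la vôtre l'exige mais rend la structure de l'argument plus transparente. Les deux sont valables.
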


\begin{proof}

On veut déterminer $\limp_n G_n$ et donc établir l'inclusion $\bigcap_n G_n\subset  H$ (l'autre étant vérifiée par hypothèse). D'après \eqref{eqres}, on vérifie aisément par récurrence l'inclusion  $G_{cn}\subset H+ G^{(n)}_0$ pour tout $n$  d'où
\[\bigcap_n G_{cn}\subset \bigcap_n H+ G^{(n)}_0=H\]
par hypothèse de fermeture de $H$.

Calculons maintenant le groupe $\rrr^1 \limp_n G_n$. Prenons un cocycle  $(f_n)_n$ et montrons que c'est un cobord. Toujours d'après \eqref{eqres}, on peut trouver par récurrence un suite\footnote{Il suffit de montrer l'inclusion $G^{(i)}_{n+kc+r}\subset G^{(i+k)}_{n} + H$. Comme on a l'inclusion $G^{(i+k)}_{n+r}\subset G^{(i+k)}_{n}$, on peut supposer $r=0$. Quand $k=1$, le résultat est exactement l'hypothèse \eqref{eqres}. Supposons pour tout $i$, $n$, le résultat vrai pour un entier $k$ fixé, on a une chaîne d'inclusion qui termine l'argument \[G^{(i)}_{n+(k+1)c+r}\subset G^{(i+1)}_{n+kc+r}+H\subset G^{(i+k+1)}_{n} + H\] (par \eqref{eqres}  pour la première et par hypothèse de récurrence pour la seconde). } $(h_n)_{n}\in H^{\N}$ tel que pour tout $n$, $k$ et  $r<c$, on a
\[f_{n +kc +r}-h_{n +kc +r}\in  G^{(k)}_{n }.\]
Dans ce cas, la somme $\sum_{m\ge n} f_m-h_m$ converge dans $G_n$ pour tout entier $n$ et vérifie \[\delta((\sum_{m\ge n} f_m-h_m)_n)=(f_n)_n-(h_n)_n.\]   Donnons-nous $\tilde{h}_0\in H$ et construisons par récurrence une suite $(\tilde{h}_n)_n$ tq $\tilde{h}_{n+1}=\tilde{h}_n-h_n$ ie. $\delta ((\tilde{h}_n)_n)=(h_n)_n$. On en déduit que  $(f_n)_n$ est en fait le cobord $\delta((\sum_{m\ge n} f_m-h_m)_n+(\tilde{h}_n)_n)$.

\end{proof}

\begin{proof}\eqref{propr1lim}
Les constantes $\Ff(U_n)\cap L $ forment des fermés de $\Ff(U_n) $ pour  $\Ff=\Of^+,\Of^{(r)},\Of^{**},L^*\Of^{**}$.
Les suites décroissantes  $(\Of^{+} (U_n))_n$ et $(\Of^{(r)}(U_n)_n)$ de groupes topologiques vérifient clairement l'inclusion \eqref{eqres} et \ref{proprlimo+o++} par hypothèse. Montrons que c'est encore le cas pour les suites $(L^*\Of^{**} (U_n))_n$ et $(\Of^{**}(U_n))_n$. Raisonnons uniquement pour le second, le premier s'en déduira aisément. Soit $1+\varpi^k  f$ avec $f\in \Of^{++}(U_n)$, on peut trouver une constante $\lambda\in\mG_L$ telle que $f-\lambda\in \varpi\Of^{++}(U_{n-1})$.   Alors, on a        \[\frac{1+\varpi^k f }{1+\varpi^k \lambda}=1+\varpi^k \frac{f-\lambda}{1+\varpi^k \lambda}\in  1+\varpi^{k+1} \Of^{++}(U_{n-1}). \]          
Le résultat \ref{propr1lim} est alors une conséquence directe du lemme précédent.
\end{proof}


La base canonique $(e_i)_{0\le i\le d}$ de $K^{d+1}$ définit une collection de $d+1$ hyperplans $V^+(z_i)\subset \P^d_{rig, L}$ et on note $\BC$ l'arrangement algébrique $\{V^+(z_i)\}_{0\le i\le r}$.
\begin{coro}\label{coroo+arralg}
 L'espace ${\rm Int} ( \BC)$ défini plus haut est acyclique pour les faisceaux $\Of^{(r)}$, $\Of^{**}$ et $\G_m$. Les  sections globales de $\Of^+$ et $\Of^{**}$ sont constantes et \[\OC^*({\rm Int} ( \BC))=L^*\times T\] avec $T=\left\langle \frac{z_i}{z_0} : 1\le i\le r \right\rangle_{\Z\modut}$
\end{coro}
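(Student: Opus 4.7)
L'idée est d'observer que, dans la carte affine $z_0\neq 0$ avec coordonnées $w_i=z_i/z_0$, l'espace ${\rm Int}(\BC)$ s'identifie au complémentaire dans $\A^d_{rig,L}$ des hypersurfaces de coordonnées $w_1,\ldots,w_r$. Je l'épuiserais par la suite croissante d'affinoïdes
\[
U_n=\{w\in\A^d_{rig,L}\ :\ |\varpi|^n\le |w_i|\le |\varpi|^{-n}\ \text{si}\ 1\le i\le r,\ \ |w_j|\le |\varpi|^{-n}\ \text{si}\ r+1\le j\le d\},
\]
chaque $U_n$ étant un produit de polycouronnes fermées et de polydisques fermés.

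Les résultats de van der Put (\ref{theovdp}) fournissent alors l'acyclicité de chaque $U_n$ pour $\Of^{(r)}$ et $\G_m$. Le lemme \ref{claimostrelpolicd} appliqué à $U_n$ (vu comme $C\times\spg L$ après une renormalisation licite) donne la décomposition
\[
\Of^*(U_n)=L^*\Of^{**}(U_n)\times T,\qquad T=\left\langle w_i\ :\ 1\le i\le r\right\rangle_{\Z\modut},
\]
indépendante de $n$, ce qui entraîne aussi l'acyclicité de $U_n$ pour $\Of^{**}$. D'autre part, le lemme \ref{lemobspolycourdisq} fournit directement l'inclusion $\Of^+(U_{n+1})\subset\OC_L+\varpi\Of^+(U_n)$ (la constante $c=1$ convient), de sorte que l'hypothèse \eqref{eq:reso+} de la proposition \ref{propr1lim} est vérifiée. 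Celle-ci (combinée avec \ref{proprlimo+o++}) donne alors la constance des sections globales de $\Of^+,\Of^{(r)},\Of^{**},L^*\Of^{**}$ sur ${\rm Int}(\BC)$, ainsi que l'annulation de $\rrr^1\limp_n\Ff(U_n)$ pour ces faisceaux.

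Je conclus ensuite par la suite exacte \eqref{eq:suitexrlimh} associée au recouvrement quasi-Stein $\{U_n\}$. Pour $\Ff\in\{\Of^{(r)},\Of^{**}\}$ et $s\ge 1$, les deux extrémités s'annulent, d'où $\han{s}({\rm Int}(\BC),\Ff)=0$. Pour $\Ff=\G_m$ et $s\ge 2$, même argument. Pour $s=1$, le terme de droite s'annule par van der Put et le terme de gauche vaut $\rrr^1\limp_n\Of^*(U_n)=\rrr^1\limp_n L^*\Of^{**}(U_n)\times\rrr^1\limp_n T=0$, car $T$ est un système constant et $L^*\Of^{**}$ relève de \ref{propr1lim}. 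Enfin, on obtient
\[
\Of^*({\rm Int}(\BC))=\limp_n\Of^*(U_n)=\bigl(\limp_n L^*\Of^{**}(U_n)\bigr)\times T=L^*\times T,
\]
la dernière égalité provenant du fait que $\Of^{**}({\rm Int}(\BC))$ est constant, donc trivial modulo $L^*$.

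Aucune étape n'est réellement délicate: il s'agit essentiellement d'articuler proprement trois ingrédients déjà établis (van der Put, décomposition \ref{claimostrelpolicd}, et \ref{propr1lim}). La seule vérification à soigner est la compatibilité des décompositions de $\Of^*(U_n)$ lorsque $n$ varie, mais cela découle de leur nature explicite dans \ref{claimostrelpolicd}.
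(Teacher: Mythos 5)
Votre démonstration est correcte et suit essentiellement la même route que celle du texte : même épuisement par des produits de polycouronnes et polydisques fermés, van der Put pour l'acyclicité terme à terme, \ref{claimostrelpolicd} pour la décomposition des inversibles, puis \ref{lemobspolycourdisq} et \ref{propr1lim} pour annuler les $\rrr^1\limp$ et conclure via la suite exacte \eqref{eq:suitexrlimh}. La seule imprécision est que l'acyclicité des $U_n$ pour $\Of^{**}$ ne découle pas de la décomposition de $\Of^*(U_n)$ (qui ne concerne que les sections globales) mais de l'acyclicité pour $\Of^{(r)}$ pour tout $r<1$ combinée au logarithme tronqué \ref{lemoroetun}; le texte se contente d'ailleurs d'invoquer \ref{theovdp} à cet endroit.
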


\begin{proof}
On voit cet arrangement d'hyperplans comme le produit $(\A_{rig, L}^1\backslash\{0\})^r\times\A_{rig, L}^{d-r}$ 
On le recouvre par $(X_n)_n$ où (en posant $x_i=z_i/z_0$)\[X_n=\{x=(x_1 ,\cdots, x_d)\in \A^d_{rig, L} : \forall i\le r, |\varpi|^{-n} \ge|
x_i|\ge|\varpi|^{n},\forall j\ge r+1, |\varpi|^{-n} \ge|x_j|\}.\] On a la suite exacte :
\[ 0 \to \rrr^1 \varprojlim \han{s-1}(X_n, \Ff) \to \han{s}({\rm Int} ( \BC), \Ff) \to \varprojlim \han{s}(X_n, \Ff) \to 0. \] Mais ${\rm Int} ( \BC)=\bigcup_n X_n$  est un recouvrement admissible constitu\'e de produits de polycouronnes et polydisques, chacun des termes est acyclique pour les faisceaux $\Of^{(r)}$, $\Of^{**}$ et $\G_m$ d'apr\`es \ref{theovdp} d'où pour $s>0$ \[\varprojlim \han{s}(X_n, \Ff) =0\et \han{s}({\rm Int} ( \BC), \Ff)=\rrr^1 \varprojlim \han{s-1}(X_n, \Ff).\] Grâce au résultat \ref{lemobspolycourdisqrel}, on peut appliquer \ref{propr1lim} et on en déduit l’énoncé pour les faisceaux $\Of^{(r)}$ et $\Of^{**}$. On obtient aussi l'annulation de la cohomologie de $\G_m$ en degré supérieur ou égal à $2$.

D'après  \ref{claimostrelpolicd}, on a une décomposition en produits directs du système projectif\footnote{où $T$ est vu comme un système projectif constant} $(\Of^*(X_n))_n$  \[(\Of^*(X_n))_n=(L^*\Of^{**}(X_n))_n\times  (T)_n.\]  
Ainsi  $\Of^*({\rm Int}(\BC))=\limp_n L^*\Of^{**}(X_n)\times  \limp_n T=L^* \times  T$ (en utilisant \ref{propr1lim}) et \[{\rm Pic}_L({\rm Int}(\BC))=\rrr^1\limp_n L^*\Of^{**}(X_n)\times \rrr^1\limp_n T=0.\]
 



\end{proof}

Nous pouvons maintenant \'enoncer le th\'eor\`eme principal de cette section : 

\begin{theo}\label{theoacyco+arralg}
Les arrangements alg\'ebriques sont $\Of^{(r)}$, $\Of^{**}$-acycliques et les sections globales  sont constantes. 
\end{theo}

\begin{proof}

\begin{lem}\label{lemo+constzdt}
Les fibrations $Z^d_t$ sont acycliques pour $\Of^{(r)}$ et $\Of^{**}$, et les sections globales  sont constantes. 
\end{lem}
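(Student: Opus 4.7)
L'idée est d'exploiter la fibration $f : Z^d_t \to \P^t_{rig,L}$ et de calculer la cohomologie via le complexe de Čech sur le recouvrement $f^*\VC = \{f^{-1}(V_i)\}_{0\leq i\leq t}$, où $\VC$ est le recouvrement standard de $\P^t_{rig,L}$ par les ouverts affines $V_i = \{z_i\neq 0\}$. Je traiterai d'abord le cas de $\Of^{(r)}$, avant de déduire celui de $\Of^{**}$ par l'argument ultérieur \ref{lemoroetun} sur les logarithmes tronqués.

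La première étape consistera à vérifier que le recouvrement $f^*\VC$ est $\Of^{(r)}$-acyclique, de sorte que la cohomologie de Čech calcule la cohomologie du faisceau. Les intersections $f^{-1}(V_I) \cong V_I \times \A^{d-t}_{rig,L}$ sont isomorphes à des produits de copies de $\A^1$ et de $\A^1 \setminus \{0\}$ (précisément $|I|-1$ facteurs pointés et $d-|I|+1$ facteurs pleins). Ce sont des polydisques généralisés au sens de \cite[3.9]{vdp}, auxquels le théorème de van der Put \ref{theovdp} (sous sa forme étendue mentionnée en note de bas de page) s'applique.

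La seconde étape, qui constitue le point le plus délicat, sera le calcul explicite $\Of^{(r)}(f^{-1}(V_I)) = \OC_L^{(r)}$. Je procéderai par une version du développement en série de Laurent analogue à \ref{lemcourrel}, adaptée au cas des facteurs non bornés, en exhaustant chaque facteur $\A^1$ (resp. $\A^1\setminus\{0\}$) par des polydisques $\B(|\varpi|^{-n})$ (resp. par des polycouronnes croissantes). Pour chaque niveau de l'exhaustion, \ref{lemcourrel} fournit une base de Banach adaptée, et la condition de norme spectrale strictement bornée par $r$ sur la totalité de l'espace non borné force l'annulation de tous les coefficients de monômes non constants, par un analogue non-archimédien du théorème de Liouville (les rayons peuvent tendre vers $0$ ou vers $+\infty$). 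Seul persiste le terme constant dans $\OC_L^{(r)}$.

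Ces calculs faits, le complexe de Čech se réduit à $\check{\CC}^s = \prod_{|I|=s+1} \OC_L^{(r)}$ avec les différentielles alternées combinatoires standard : c'est précisément le complexe calculant la cohomologie simpliciale du nerf de $\VC$ à coefficients constants. Comme toutes les intersections $V_I$ sont non vides (elles contiennent par exemple le point $[1:\cdots:1]$), le nerf est le $t$-simplexe plein $\Delta^t$, qui est contractile. On en conclura que $\han{k}(Z^d_t, \Of^{(r)}) = \OC_L^{(r)}$ si $k=0$ et s'annule en tout autre degré, établissant les deux assertions pour $\Of^{(r)}$. Le passage au faisceau $\Of^{**}$ découlera alors du lemme \ref{lemoroetun} par application du logarithme tronqué, en choisissant $r$ suffisamment petit pour que $\log : \Of^{**} \to \Of^{(r)}$ soit bien défini et inversible.
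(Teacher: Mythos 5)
Votre stratégie globale (complexe de Čech sur le recouvrement $f^*(\VC)$, nerf contractile $\Delta^t$, sections globales constantes) est bien celle du texte, mais il y a une lacune réelle dans votre première étape. Les intersections $f^{-1}(V_I)\cong V_I\times\A^{d-t}_{rig,L}$, produits de copies de $\A^1$ et de $\A^1\setminus\{0\}$, ne sont pas des affinoïdes : ce ne sont pas des « polydisques généralisés » au sens de \cite[3.9]{vdp}, et le théorème \ref{theovdp} ne s'y applique donc pas directement. Pour ces espaces quasi-Stein non quasi-compacts, l'acyclicité ne se réduit pas à celle des polycouronnes fermées d'une exhaustion $X_n$ : la suite exacte \eqref{eq:suitexrlimh} laisse subsister en degré $1$ le terme $\rrr^1\varprojlim_n\Of^{(r)}(X_n)$ (le faisceau $\Of^{(r)}$ n'étant pas cohérent, on ne peut invoquer Kiehl). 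Votre argument de type Liouville ne traite que $\hhh^0$ et ne dit rien de ce $\rrr^1\varprojlim$. C'est précisément l'objet du corollaire \ref{coroo+arralg}, dont la preuve repose sur le lemme \ref{lemobspolycourdisqrel} (l'inclusion $\Of^+(V)\subset\OC_L+\varpi\Of^+(U)$ pour des polycouronnes emboîtées) combiné à \ref{propr1lim} ; c'est cette étape qu'il faut ajouter, ou citer, pour que $f^*(\VC)$ soit effectivement un recouvrement de Leray pour $\Of^{(r)}$.

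Second point, plus mineur : pour passer à $\Of^{**}$ vous invoquez \ref{lemoroetun}, mais ce lemme suppose $X$ quasi-compact (la quasi-compacité sert à commuter $\varinjlim_{r\to 1^-}$ à la cohomologie), ce qui n'est pas le cas de $Z^d_t$ dès que $t<d$ (les fibres de $f$ sont des espaces affines), ni des intersections $f^{-1}(V_I)$. Le texte contourne ce point en établissant directement dans \ref{coroo+arralg} l'acyclicité et la constance des sections de $\Of^{**}$ sur chaque $f^{-1}(V_I)$ — la proposition \ref{propr1lim} traitant $\Of^{**}$ au même titre que $\Of^{(r)}$ — puis en réappliquant tel quel l'argument de contractibilité du nerf au faisceau constant correspondant.
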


\begin{proof}
On raisonne sur la suite spectrale de Cech pour le recouvrement $f^{*}(\VC)$ de $Z^d_t$. D'apr\`es le corollaire \ref{coroo+arralg}, chaque intersection est $\Of^{(r)}$-acyclique et on se ram\`ene \`a calculer la cohomologie de Cech sur le recouvrement $f^*(\VC)$ qui est isomorphe à $\check{\CC}^\bullet(Z^d_t,f^*(\VC),\OC_L^{(r)})$. Mais le nerf du recouvrement est le simplexe standard $\Delta^t$ de dimension $t$, qui est contractile. Ceci montre l'annulation de la cohomologie en degr\'e sup\'erieur ou \'egal \`a 1. On obtient aussi ais\'ement que $\Of^{(r)}(Z^d_t)= \OC_L^{(r)}$. On raisonne de même pour $\Of^{**}$.
\end{proof}

D'après  \ref{lemo+constzdt} et l'identification $\P^d_{rig, L}=Z_d^d$, la flèche d'inclusion $Z_t^d \to \P^d_{rig, L}$ induit alors des isomorphismes 
\[\han{s} (\P^d_{rig, L}, \Of^{(r)}) \cong \han{s} (Z_t^d, \Of^{(r)})\]
pour tout $s$ positif. D'o\`u l'annulation de $\han{s} (\P^d_{rig, L}, Z_t^d, \Of^{(r)})$. Alors la suite spectrale \eqref{eqsuitspectgen} d\'eg\'en\`ere et on obtient $\han{s}(\P^d_{rig, L}, {\rm Int} (\AC), \Of^{(r)})=0$ pour tout $s$. Ce qui se traduit par 
\[ \han{s}({\rm Int} (\AC), \Of^{(r)}) = \begin{cases} \OC_L^{(r)} \text{ si } s=0 \\ 0 \text{ sinon}. \end{cases} \] 
On raisonne de même pour $\Of^{**}$.

\end{proof}

\section{Cohomologie analytique \`a coefficients dans $\Of^{(r)}$\label{sssectioncohoano+}}

\subsection{Cohomologie des fibrations $X^d_t(\beta)$.\label{sparagrapho+xdt}}

Nous allons chercher \`a d\'eterminer la cohomologie des espaces $X^d_t(\beta)$. Commen\c{c}ons par faire quelques rappels sur les faisceaux localement libres de rang 1 sur $\P^t_{rig,L}$ et $\P^t_{zar,L}$. Dans le cas alg\'ebrique, ils sont d\'ecrits par les faisceaux tordus $\Of_{\P^t_{zar,L}}(k)$ avec $k$ dans $\Z$. Ce faisceau se trivialise sur le recouvrement usuel $\VC$ et les fonctions de transition font commuter le diagramme\footnote{Dans ce qui suit, on note aussi $\tilde{z}$ la variable sur $\P^t_{zar,A}$ ie. $\P^t_{zar,A}={\rm Proj}(A[\tilde{z}_0,\cdots,\tilde{z}_t])$
} (cf \ref{fnotint} pour la notation $V_{\{i,j\}}$)
\begin{equation}\label{eq:trans}
\xymatrix{ 
\Of_{\P^t_{zar, L}} (k)|_{V_{\{i,j\}}} \ar[r]^-{\sim} \ar[d]^{\rm Id}  &  \Of_{V_i}|_{V_{\{i,j\}}} \ar[d]^{ m_{ (\frac{\tilde{z}_i}{\tilde{z}_j})^{-k}}}  \\
\Of_{\P^t_{zar, L}} (k)|_{V_{\{i,j\}}} \ar[r]^-{\sim} & \Of_{V_j}|_{V_{\{i,j\}}}.
}
\end{equation}
  
 En g\'eom\'etrie rigide, on peut encore d\'efinir les faisceaux tordus $\Of_{\P^t_{rig,L}}(k)$, $\Of^+_{\P^t_{rig,L}}(k)$ (version \`a puissance born\'ee) et $\Of^{(r)}_{\P^t_{rig,L}}(k)$ gr\^ace aux m\^emes morphismes de transition : 
  \[
\xymatrix{ 
\Of^+_{\P^t_{rig, L}} (k)|_{V(\beta)_{\{i,j\}}} \ar[r]^-{\sim} \ar[d]^{\rm Id}  &  \Of^+_{V(\beta)_i}|_{V(\beta)_{\{i,j\}}} \ar[d]^{ m_{ (\frac{\tilde{z}_i}{\tilde{z}_j})^{-k}}}  \\
\Of^+_{\P^t_{rig, L}} (k)|_{V(\beta)_{\{i,j\}}} \ar[r]^-{\sim} & \Of^+_{V(\beta)_j}|_{V(\beta)_{\{i,j\}}}.
}
\]  
On rappelle que l'on a bien $\frac{\tilde{z}_i}{\tilde{z}_j}\in \Of^+(V(\beta)_{\{i,j\}})$. On construit grâce à un diagramme similaire $\Of_{\P^t_{rig,L}}(k)$, $\Of^{(r)}_{\P^t_{rig,L}}(k)$.
 D'apr\`es GAGA (voir aussi \ref{theocohoangmxdt} pour une démonstration plus élémentaire), les $\Of_{\P^t_{rig,L}}(k)$ sont les seuls faisceaux localement libres de rang $1$. Nous pouvons aussi d\'efinir ces faisceaux tordus sur les fibrations $X^d_t(\beta)$ en tirant en arri\`ere par $f$ i.e. $\Of_{X^d_t(\beta)}(k) = f^{*} \Of_{\P^t_{rig,L}}(k)$, $\Of^+_{X^d_t(\beta)}(k) = f^{*} \Of^+_{\P^t_{rig,L}}(k)$ et $\Of^{(r)}_{X^d_t(\beta)}(k) = f^{*} \Of^{(r)}_{\P^t_{rig,L}}(k)$. L'un des points techniques de l'argument \ref{lemipiciso} montrera que $\Of_{X^d_t(\beta)}(k) = \Of_{\P^d_{rig,L}}(k)|_{X^d_t(\beta)}$.

En alg\'ebrique, la cohomologie de Zariski de ces faisceaux tordus est connue et peut \^etre trouv\'ee dans (\cite[ théorème 5.1 section III]{hart}) par exemple. Pour tout anneau $A$ et $k$ un entier, la cohomologie de $\Of_{\P^t_{zar,A}}(k)$ est concentr\'ee en degr\'e 0 si $k$ est positif et en degr\'e $t$ si $k$ est strictement n\'egatif. Plus pr\'ecis\'ement, on a des isomorphismes : 
\[ \hzar{0}(\P^t_{zar,A},\Of(k)) \cong A[ T_0, \dots ,T_t]_k \text{ si $k$ est positif}\]
\[ \hzar{t}(\P^t_{zar,A},\Of(k)) \cong (\frac{1}{T_0 \dots T_t} A[ \frac{1}{T_0}, \dots , \frac{1}{T_t}])_k \text{ si $k$ est n\'egatif}\]
o\`u $A[ T_0, \dots ,T_t]_k$ d\'esigne l'ensemble des polyn\^omes homogènes de degr\'e $k$. 

On se propose de calculer la cohomologie de $\Of^{(r)}$ des fibrations  $X^d_t(\beta)$. Plus précisément, nous souhaitons montrer :
\begin{theo}\label{coroorptxdt}

\begin{itemize}
\item La cohomologie à  coefficients dans $\Of^{(r)}(k)$ de l'espace projectif $\P^t_{rig,L}$ est concentrée en degré $0$ si $k$ est positif et en degré $t$ si $k$ est strictement négatif. De même la cohomologie des fibrations $X^d_t(\beta)$ est concentrée en degrés $0$ et $t$.
\item Plus pr\'ecis\'ement, on a des isomorphismes :   
\[\han{0}(\P^t_{rig,L},\Of^{(r)}(k)) \cong \OC^{(r)}_L\otimes_{\OC_L} \hzar{0}(\P^t_{zar,\OC_L},\Of(k))\cong \OC_L^{(r)} [ T_0, \dots ,T_t]_k, \]
\[ \han{t}(\P^t_{rig,L},\Of^{(r)}(k)) \cong \OC^{(r)}_L\otimes_{\OC_L} \hzar{0}(\P^t_{zar,\OC_L},\Of(k))\cong (\frac{1}{T_0 \dots T_t} \OC_L^{(r)}[ \frac{1}{T_0}, \dots , \frac{1}{T_t}])_k. \]
De plus, pour $r\le r'$, les flèches suivantes sont injectives pour $s\ge 0$
\[\han{s}(\P^t_{rig,L},\Of^{(r)}(k))\to \han{s}(\P^t_{rig,L},\Of^{(r')}(k)).\]
\item On dispose d'isomorphismes \[\han{0}(X^d_t(\beta),\Of^{(r)}(k))\simeq \drt{ \han{0}(\P^t_{rig,L},\Of^{(r)}(k-|\alpha|))}{\alpha\in \N^{d-t}\\ |\alpha|\le k}{},\]
 en particulier les sections globales des faisceaux $\Of^{(r)}(k)$ sont nulles si $k<0$ et s'identifie \`a $\OC_L^{(r)}$ si $k=0$. Enfin, 
 $\han{t}(X^d_t(\beta),\Of^{(r)}(k))$ est isomorphe au complét\'e $p$-adique de 
\[\drt{ \han{t}(\P^t_{rig,L},\Of^{(r)}(k-|\alpha|))}{\alpha\in \N^{d-t}\\|\alpha|\ge t+1+k}{}.\]
\end{itemize}


\end{theo}

\begin{proof}
Les intersections d'éléments du recouvrement $\VC(\beta)$ et $f^*(\VC(\beta))$ sont des produits de polycouronnes et de polydisques ferm\'es dont les polyrayons sont dans $|L^*|$. 
Ainsi, on se ramène à calculer la cohomologie de Cech sur les recouvrements $\VC(\beta)$ et $f^*(\VC(\beta))$ (cf \ref{theovdp}). De plus, pour toute section non nulle $h$ de $\Of^{(r)}(V(\beta)_I)$  (resp. $\Of^{(r)}(f^{-1}(V(\beta)_I))$) (cf \ref{fnotint} pour la notation), il existe une constante $\lambda\in \Of^{(r)}_L$ telle que $h/\lambda$ soit de norme $1$. On en déduit $\Of^{(r)}(V(\beta)_I)=\Of^{+}(V(\beta)_I)\otimes_{\OC_L}\Of^{(r)}_L$ (resp. $\Of^{(r)}(f^{-1}(V(\beta)_I))=\Of^{+}(f^{-1}(V(\beta)_I))\otimes_{\OC_L}\Of^{(r)}_L$) donc \[\check{\CC}^{\bullet} (\P^t_{rig,L};\Of^{(r)}(k),f^*(\VC(\beta)))=\check{\CC}^{\bullet} (\P^t_{rig,L};\Of^{+}(k),f^*(\VC(\beta)))\otimes_{\OC_L}\Of^{(r)}_L,\]
\[\check{\CC}^{\bullet} (X^d_t(\beta);\Of^{(r)}(k),\VC(\beta))=\check{\CC}^{\bullet} (X^d_t(\beta);\Of^{+}(k),\VC(\beta))\otimes_{\OC_L}\Of^{(r)}_L.\] 
Par platitude, on obtient les isomorphismes au niveau des groupes de cohomologie 
\[\hcech{*} (\P^t_{rig,L};\Of^{(r)}(k),f^*(\VC(\beta)))=\hcech{*} (\P^t_{rig,L};\Of^{+}(k),f^*(\VC(\beta)))\otimes_{\OC_L}\Of^{(r)}_L,\]
\[\hcech{*} (X^d_t(\beta);\Of^{(r)}(k),\VC(\beta))=\hcech{*} (X^d_t(\beta);\Of^{+}(k),\VC(\beta))\otimes_{\OC_L}\Of^{(r)}_L\]
et l'injectivité des inclusions quand $r$ varie. Le reste de cette section sera consacré au calcul de ces groupes de cohomologie de Cech sur $\Of^+$. Cela repose sur le lemme général suivant :

\begin{lem}\label{abstrait}
  Soit $\CC^{\bullet}$ un complexe constitué de $\Z_p$-modules plats tel que  
  $\hhh^j(\CC^{\bullet})$ est sans 
  $p$-torsion pour tout $j$.  On a alors un isomorphisme naturel $\hhh^j(\widehat{\CC^{\bullet}})\simeq 
  \widehat{\hhh^j(\CC^{\bullet})}$ où les complétions considérées sont réalisées suivant la topologie $p$-adique.
\end{lem}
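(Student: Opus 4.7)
Le plan est de réduire l'énoncé à l'étude des quotients $\CC^{\bullet}/p^n$, puis de passer à la limite projective via une suite de Milnor. D'abord, j'utiliserais la platitude de $\CC^{\bullet}$ pour écrire, à chaque entier $n\ge 1$, la suite exacte courte de complexes
$$0\to \CC^{\bullet}\fln{p^n}{}\CC^{\bullet}\to \CC^{\bullet}/p^n\to 0.$$
La suite longue de cohomologie associée, combinée à l'hypothèse d'absence de $p$-torsion sur les $\hhh^j(\CC^{\bullet})$, donne les suites exactes courtes
$$0\to \hhh^j(\CC^{\bullet})/p^n\to \hhh^j(\CC^{\bullet}/p^n)\to \hhh^{j+1}(\CC^{\bullet})[p^n]\to 0,$$
dont le terme de droite s'annule par hypothèse. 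On obtient ainsi des isomorphismes naturels $\hhh^j(\CC^{\bullet}/p^n)\simeq \hhh^j(\CC^{\bullet})/p^n$ pour tout $j$ et tout $n$.

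Ensuite, je remarquerais que $\widehat{\CC^{\bullet}}=\varprojlim_n \CC^{\bullet}/p^n$ et que les flèches de transition sont termwise surjectives (donc en particulier Mittag-Leffler). J'appliquerais alors la suite exacte de Milnor pour une tour de complexes avec $\rrr^1\varprojlim$ nul termwise, ce qui fournit pour chaque $j$
$$0\to \rrr^1\varprojlim_n \hhh^{j-1}(\CC^{\bullet}/p^n)\to \hhh^j(\widehat{\CC^{\bullet}})\to \varprojlim_n \hhh^j(\CC^{\bullet}/p^n)\to 0.$$
En injectant les identifications de la première étape, le terme de gauche devient $\rrr^1\varprojlim_n \hhh^{j-1}(\CC^{\bullet})/p^n$, qui s'annule par Mittag-Leffler car les flèches $\hhh^{j-1}(\CC^{\bullet})/p^{n+1}\to \hhh^{j-1}(\CC^{\bullet})/p^n$ sont surjectives; le terme de droite s'identifie à $\widehat{\hhh^j(\CC^{\bullet})}$ par définition de la complétion $p$-adique. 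L'isomorphisme recherché en découle, et sa naturalité est claire par construction.

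Le point qui mérite le plus d'attention est la suite de Milnor pour la tour de complexes $(\CC^{\bullet}/p^n)_n$: il faut vérifier que la surjectivité termwise des flèches de transition suffit pour obtenir cette suite exacte à partir de la suite spectrale de composition $\Gamma\circ\varprojlim$, ou équivalemment via l'argument classique comparant $\varprojlim \CC^{\bullet}/p^n$ au noyau de l'endomorphisme $(\id-\varphi)$ de $\prod_n \CC^{\bullet}/p^n$. Les autres étapes sont de simple algèbre homologique et reposent uniquement sur la platitude de $\CC^{\bullet}$ et l'absence de $p$-torsion sur sa cohomologie.
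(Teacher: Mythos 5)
Votre démonstration est correcte, mais elle suit un chemin différent de celui de l'article. Vous réduisez d'abord modulo $p^n$ via la suite exacte $0\to \CC^{\bullet}\fln{p^n}{}\CC^{\bullet}\to \CC^{\bullet}/p^n\to 0$ (licite par platitude) pour obtenir $\hhh^j(\CC^{\bullet}/p^n)\simeq \hhh^j(\CC^{\bullet})/p^n$ grâce à l'absence de $p$-torsion sur $\hhh^{j+1}$, puis vous invoquez la suite exacte de Milnor pour la tour de complexes $(\CC^{\bullet}/p^n)_n$ à flèches de transition surjectives terme à terme, le terme en $\rrr^1\varprojlim$ s'annulant par Mittag-Leffler. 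L'article procède autrement : il travaille directement avec les cycles $B^j=\ker d^j$ et les bords $A^j={\rm Im}\, d^{j-1}$, montre que les suites $0\to A^j/p^n\to B^j/p^n\to \hhh^j/p^n\to 0$ et $0\to B^j/p^n\to \CC^j/p^n\to A^{j+1}/p^n\to 0$ restent exactes, puis passe à la limite projective par Mittag-Leffler pour identifier $\widehat{B}^j=\ker\hat{d}^j$ et $\widehat{A}^{j+1}={\rm Im}\,\hat{d}^j$. Les deux arguments reposent sur les mêmes ingrédients de fond (platitude pour contrôler la $p^n$-torsion, annulation de $\varprojlim^1$ sous condition de surjectivité), mais le vôtre est plus court au prix d'utiliser la suite de Milnor pour une tour de complexes comme boîte noire — point que vous signalez à juste titre comme celui à vérifier, et qui est effectivement standard sous l'hypothèse de surjectivité terme à terme — tandis que celui de l'article est entièrement autonome et élémentaire. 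Votre variante fournit en prime l'identification intermédiaire $\hhh^j(\CC^{\bullet}/p^n)\simeq \hhh^j(\CC^{\bullet})/p^n$, qui a un intérêt propre.
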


\begin{proof} Soit $A^j={\rm Im}(d^{j-1}: \CC^{j-1}\to \CC^j)$
 et $B^j=\ker(d^{j}: \CC^j\to \CC^{j+1})$. On a  une suite exacte 
 $0\to A^j\to B^j\to \hhh^j(\CC^{\bullet})\to 0$ d'où l'exactitude de $0\to A^j/p^n\to B^j/p^n\to \hhh^j(\CC^{\bullet})/p^n\to 0$ car  $\hhh^j(\CC^{\bullet})$ 
 est sans $p$-torsion par hypoth\`ese. Par Mittag-Leffler, on obtient  encore une suite exacte $0\to \widehat{A}^j\to \widehat{B}^j\to   \widehat{\hhh^j(\CC^{\bullet})}\to 0$. Il suffit donc de montrer que $\widehat{A}^j={\rm Im}(\hat{d}^{j-1}: \widehat{\CC}^{j-1}\to \widehat{\CC}^j)$ et 
 $\widehat{B}^j=\ker(\hat{d}^j: \widehat{\CC}^j\to \widehat{\CC}^{j+1})$. Comme $\CC^{j+1}$ (et donc $A^{j+1}$) est sans $p$-torsion, on a l'exactitude de la suite $0\to B^j/p^n\to \CC^j/p^n\to A^{j+1}/p^n\to 0$   d'où celle de
 \[
 0\to B^{j}/p^n \to \CC^j/p^n  \to \CC^{j+1}/p^n
 \]
car on a montré que $A^{j+1}/p^n\to B^{j+1}/p^n \to \CC^{j+1}/p^n$ est injective.  En passant à la limite projective dans les deux suites précédentes, on obtient $\widehat{B}^j=\ker \hat{d}^{j}$ et $\widehat{A}^{j+1}\cong\widehat{\CC}^{j}/\widehat{B}^{j}=\widehat{\CC}^{j}/\ker{\hat{d}^j}\cong {\rm Im} \,\hat{d}^j$. 
 

\end{proof}


\begin{coro} \label{lemo+ptrig}
La cohomologie de Cech de $\Of^+_{\P^t_{rig,L}}(k)$ sur le recouvrement $V(\beta)$ est concentr\'ee en degr\'e 0 si $k$ est positif et en degr\'e $t$ si $k$ est strictement  n\'egatif. Plus pr\'ecis\'ement, on a des isomorphismes :   
\[\hcech{0}(\P^t_{rig,L},\Of^+(k),V(\beta)) \cong \OC_L [ T_0, \dots ,T_t]_k \text{ si $k$ est positif},\]
\[ \hcech{t}(\P^t_{rig,L},\Of^+(k),V(\beta)) \cong (\frac{1}{T_0 \dots T_t} \OC_L[ \frac{1}{T_0}, \dots , \frac{1}{T_t}])_k \text{ si $k$ est n\'egatif}.\]
\end{coro}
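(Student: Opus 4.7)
La stratégie consiste à identifier le complexe de Cech rigide au complété $p$-adique du complexe de Cech algébrique classique sur $\P^t_{zar,\OC_L}$, puis à appliquer le lemme \ref{abstrait} qui vient d'être démontré.

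D'abord, je renormaliserais les coordonnées en posant $\tilde{z}_i = z_i/\varpi^{\beta_i}$. Dans ces nouvelles variables, $V(\beta)_i$ devient l'ouvert affinoïde standard $\{|\tilde{z}_j|\le|\tilde{z}_i|\ \forall j\}$ de $\P^t_{rig,L}$, et par la construction même de $\Of^+(k)$ les cocycles de recollement sur $V(\beta)_{\{i,j\}}$ sont $m_{(\tilde{z}_i/\tilde{z}_j)^{-k}}$, c'est-à-dire exactement ceux apparaissant dans \eqref{eq:trans}. En trivialisant $\Of^+(k)$ sur chaque intersection non vide $V(\beta)_I$ par la section $\tilde{z}_{i_0}^k$ pour un $i_0\in I$ fixé, le facteur correspondant du groupe de Cech $\check{\CC}^{|I|-1}$ s'identifie à $\Of^+(V(\beta)_I)$, qui coïncide à son tour avec le complété $p$-adique de l'anneau des sections algébriques $\Of_{\P^t_{zar,\OC_L}}(V_I)$ sur l'ouvert affine standard $V_I$ ; c'est précisément la renormalisation $\tilde{z}_i=z_i/\varpi^{\beta_i}$ qui fait disparaître le polyrayon analytique et ramène les fonctions à puissances bornées aux algèbres de Tate entières classiques sur $\OC_L$. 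Cette identification, compatible aux différentielles de Cech, constitue le point demandant le plus de soin.

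Il ne reste alors qu'à appliquer le lemme \ref{abstrait}. Le complexe de Cech algébrique est formé de $\OC_L$-modules libres (donc plats et sans $p$-torsion), et sa cohomologie est calculée classiquement (\cite[III.5.1]{hart}) : en degré $0$ on trouve le $\OC_L$-module libre de rang fini $\OC_L[T_0,\dots,T_t]_k$ lorsque $k\ge 0$, en degré $t$ le module libre $(\frac{1}{T_0\cdots T_t}\OC_L[T_0^{-1},\dots,T_t^{-1}])_k$ lorsque $k\le -t-1$, et zéro ailleurs. Ces groupes étant sans $p$-torsion et libres de rang fini, le lemme \ref{abstrait} garantit que la cohomologie de Cech commute avec la complétion $p$-adique ; comme un $\OC_L$-module libre de rang fini est déjà $p$-adiquement complet, les formules annoncées s'en déduisent immédiatement.
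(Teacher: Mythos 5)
Votre preuve est correcte et suit essentiellement la même stratégie que celle du texte : identification du complexe de Cech rigide (après renormalisation $\tilde{z}_i=z_i/\varpi^{\beta_i}$ et trivialisation de $\Of^+(k)$ sur les intersections) au complété $p$-adique du complexe de Cech algébrique sur $\P^t_{zar,\OC_L}$, puis application du lemme \ref{abstrait} et conclusion par le fait que les groupes $\hzar{*}(\P^t_{zar,\OC_L},\Of(k))$ sont libres de type fini, donc déjà complets. La seule différence est cosmétique : le texte explicite en outre la décomposition du complexe algébrique en sous-complexes monomiaux $\CC^\bullet_\alpha$, ce qui n'est pas nécessaire pour l'argument tel que vous le présentez.
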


\begin{proof}
D'après  la description des fonctions analytiques de norme spectrale au plus $1$ sur un polydisque ou une polycouronne, le complexe $\check{\CC}^\bullet (\P^t_{rig, L} ; \Of_{\P^t_{rig, L}}^+(k) , \VC(\beta))$ est la complétion $p$-adique du complexe $\check{\CC}^\bullet (\P^t_{zar, \OC_L} ; \Of_{\P^t_{zar, \OC_L}}(k) , \VC)$. 
Le lemme \ref{abstrait}
 montre alors que les groupes de cohomologie de $\check{\CC}^\bullet (\P^t_{rig, L} ; \Of_{\P^t_{rig, L}}^+(k) , \VC(\beta))$ s'identifient aux compl\'et\'es $p$-adiques des groupes de cohomologie $\hzar{*}(\P^t_{zar, \OC_L}, \Of_{\P^t_{zar, \OC_L}}(k))$. Comme ces derniers sont de type fini sur $\OC_L$, la compl\'etion est en fait inutile, ce qui permet de conclure. 
\end{proof}


\begin{coro}\label{theocohoano+xdt}
 Soit $k$ un entier. La cohomologie de Cech de  $\Of^+(k)$  sur $X^d_t (\beta)$ pour le recouvrement $f^*(\VC(\beta))$ est concentrée en degrés $0$ et $t$. De plus on dispose d'isomorphismes \[\hcech{0}(X^d_t (\beta), \Of^+(k), f^*(\VC(\beta)))\simeq \drt{\hzar{0}(\P^t_{zar,\OC_L},\Of(k-|\alpha|))}{\alpha\in \N^{d-t}\\ |\alpha|\le k}{},\]
 en particulier les sections globales des faisceaux $\Of^{(r)}(k)$ sont nulles  si $k<0$ et sont constantes si $k=0$. Enfin, 
 $\hcech{t}(X^d_t (\beta), \Of^+(k),f^*(\VC(\beta)))$ est isomorphe au complét\'e $p$-adique de 
\[\drt{\hzar{t}(\P^t_{zar,\OC_L},\Of(k-|\alpha|))}{\alpha\in \N^{d-t}\\|\alpha|\ge t+1+k}{}.\]
\end{coro}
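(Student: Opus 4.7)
La preuve est une version relative, fibrée, de celle de \ref{lemo+ptrig}. L'idée est de développer les sections de $\Of^+(k)$ selon les coordonnées des fibres (qui sont des polydisques fermés) pour identifier le complexe de Cech $\check{\CC}^\bullet(X^d_t(\beta); \Of^+(k), f^*(\VC(\beta)))$ à la complétion $\varpi$-adique d'une somme directe, indexée par $\alpha \in \N^{d-t}$, de complexes du type $\check{\CC}^\bullet(\P^t_{rig, L}; \Of^+(k-|\alpha|), \VC(\beta))$ déjà traités dans \ref{lemo+ptrig}. Il restera à appliquer le lemme \ref{abstrait} pour commuter cohomologie et complétion.

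\textbf{Décomposition fibrée.} Pour toute partie non vide $I \subset \llbracket 0, t \rrbracket$, on fixe $i_0 \in I$ (par exemple $i_0 = \min I$) et on utilise la trivialisation $f^{-1}(V(\beta)_I) \simeq V(\beta)_I \times \B_L^{d-t}(-\beta_{i_0})$. Toute section de $\Of^+(k)$ sur $f^{-1}(V(\beta)_I)$ admet alors, grâce à \ref{lemcourrel} appliqué à cette fibration en polydisques, un développement unique $\sum_{\alpha \in \N^{d-t}} a_\alpha w^\alpha$ dans les coordonnées $w = (z_{t+1}/z_{i_0}, \ldots, z_d/z_{i_0})$ de la fibre, les coefficients $a_\alpha$ étant des fonctions sur $V(\beta)_I$. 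Un changement de trivialisation ($i_0 = i$ vers $i_0 = j$) induit la relation $w_l^{(i)} = (z_j/z_i)\, w_l^{(j)}$ qui, combinée aux fonctions de transition de $\Of^+(k)$ rappelées dans \ref{sparagrapho+xdt}, montre que chaque coefficient $a_\alpha$ se transforme comme une section de $\Of^+(k - |\alpha|)$ sur $\P^t_{rig, L}$. La condition spectrale $|s|_{\mathrm{spec}} \leq 1$ se traduit en une borne et une condition de convergence $a_\alpha \to 0$ sur les coefficients, faisant apparaître la complétion $\varpi$-adique de la somme directe.

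\textbf{Conclusion et obstacle principal.} Une fois cet isomorphisme de complexes établi, le Corollaire \ref{lemo+ptrig} donne la cohomologie de chaque sommande, concentrée en degrés $0$ et $t$, à valeurs dans des $\OC_L$-modules de type fini, donc sans $p$-torsion. Le Lemme \ref{abstrait} permet alors de commuter cohomologie et complétion $\varpi$-adique. En degré $0$, la condition $\hzar{0}(\P^t_{zar, \OC_L}, \Of(k-|\alpha|)) \neq 0$ équivaut à $k - |\alpha| \geq 0$, soit $|\alpha| \leq k$ : la somme directe est finie et aucune complétion n'est effectivement nécessaire, d'où la formule annoncée. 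En degré $t$, la condition analogue $\hzar{t}(\P^t_{zar, \OC_L}, \Of(k-|\alpha|)) \neq 0$ équivaut à $k - |\alpha| \leq -t-1$, soit $|\alpha| \geq k + t + 1$ : la somme est alors infinie et sa complétion $p$-adique fournit le résultat souhaité. La principale difficulté sera de vérifier rigoureusement que le développement fibré induit bien un isomorphisme de complexes de Cech (et non simplement un isomorphisme module par module), c'est-à-dire de contrôler précisément, sur chaque intersection, les fonctions de transition et les normalisations par puissances de $\varpi$ associées à $\beta$ qui interviennent dans la renormalisation des $a_\alpha$ en sections à norme spectrale $\leq 1$ des fibrés $\Of^+(k-|\alpha|)$.
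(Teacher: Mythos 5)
Votre proposition suit essentiellement la même démarche que la preuve du papier : développement unique des sections selon les coordonnées de la fibre via \ref{lemcourrel}, identification du complexe de Cech avec la complétion $p$-adique de $\bigoplus_{\alpha}\check{\CC}^\bullet(\P^t_{rig,L};\Of^+(k-|\alpha|),\VC(\beta))$ grâce à la relation de transition $\frac{w^{(i)}}{\varpi^{-\beta_i}}=\frac{\tilde{z}_j}{\tilde{z}_i}\frac{w^{(j)}}{\varpi^{-\beta_j}}$, puis application de \ref{abstrait} et de \ref{lemo+ptrig}. Le point que vous signalez comme « obstacle principal » (la compatibilité des renormalisations par $\varpi^{\beta}$ avec les fonctions de transition de $\Of^+(k-|\alpha|)$) est exactement la vérification que le papier effectue explicitement, donc votre plan est complet et correct.
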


\begin{proof}

Pour tout $i\in I\subset \left\llbracket 0, t \right\rrbracket$ fixé, on a une trivialisation $f^{-1}(V(\beta)_I)\cong V(\beta)_I\times \B_L^{d-t}(-\beta_{i})$ (cf \ref{fnotint} pour la notation $V(\beta)_I$). 
D'après \ref{lemcourrel}, tout $\lambda_I\in \Of^+(k)(f^{-1}(V(\beta)_I))$ admet une écriture unique qui dépends du choix de l'élément $i$  
\begin{equation}\label{eq:decompok}
\lambda_I=\som{\lambda^{(i)}_{I,\alpha}(z) \left(\frac{w^{(i)}}{\varpi^{-\beta_{i}}} \right)^\alpha}{\alpha\in\N^{d-t}}{}
\end{equation} 
où $z=[z_0,\cdots , z_t]$ désigne la variable de $V(\beta)_I$ vu comme un ouvert de $\P^t_{rig, L}$, $w^{(i)}=(w^{(i)}_1,\cdots, w^{(i)}_{d-t})$ est la variable de $\B^{d-t}(-\beta_{i})$, les sections 
$\lambda^{(i)}_{I,\alpha}$ sont dans $ \Of^+(k)(V(\beta)_I)$ et tendent vers $0$ $p$-adiquement. On déduit de cette décomposition le fait que le complexe de Cech $\Of^{+}_{X^d_t(\beta)}(k)$ sur le recouvrement $f^*(\VC(\beta))$ est la complétion de la somme directe $\bigoplus_{\alpha\in\N^{d-t}}\CC^\bullet_\alpha$ où chaque complexe $\CC^\bullet_\alpha$ est défini par
\[\CC^s_\alpha:=\drt{\left(\frac{w^{(i)}}{\varpi^{-\beta_{i}}} \right)^\alpha \Of^+(k)(V(\beta)_I}{|I|=s+1}{}).\]

D'après le  lemme \ref{abstrait}, il suffit de voir que la cohomologie des complexes $\CC^\bullet_\alpha$ coïncide avec celle des faisceaux $\Of_{\P^t_{zar, \OC_L}}(k-|\alpha|)$. Expliquons comment exhiber un tel isomorphisme. La relation $\frac{w^{(i)}}{\varpi^{-\beta_{i}}}=\frac{\tilde{z}_j}{\tilde{z}_i}\frac{w^{(j)}}{\varpi^{-\beta_{j}}}$ induit l'identité $\lambda^{(i)}_{I,\alpha}=(\frac{\tilde{z}_i}{\tilde{z}_j})^{k-|\alpha|}\lambda^{(j)}_{I,\alpha}$ et on peut voir $\lambda_{I,\alpha}:= (\lambda^{(i)}_{I,\alpha})_{i\in I}$ comme un élément de $\Of^+(k-|\alpha|)(V(\beta)_I)$.  On en déduit alors un isomorphisme pour tout $\alpha$ : \[\CC^\bullet_\alpha\cong\check{\CC}^\bullet (\P^t_{rig, L} ; \Of^+_{\P^t_{rig, L}}(k-|\alpha|) , \VC(\beta)).\] 
Le résultat est alors une conséquence du corollaire \ref{lemo+ptrig}.



\end{proof}
\end{proof}

\begin{rem}\label{remhtanxdt}

En fixant une trivialisation $f^{-1}(V(\beta)_I)\cong V(\beta)_I\times \B_L^{d-t}(-\beta_{i})=\B_{V(\beta)_I}^{d-t}(-\beta_{i})$ pour $i\in I= \left\llbracket 0, t \right\rrbracket$, on peut voir le groupe $\hcech{t} (X^d_t(\beta);\Of^+, f^*(\VC))$ comme un facteur direct de $\Of^+(\B_{V(\beta)_I}^{d-t}(-\beta_{i})))/\Of^+(V(\beta)_I)$
\end{rem}


\subsection{Cohomologie des complémentaires de tubes d'hyperplans\label{sparagrapho+arrtubfer}}
Nous pouvons maintenant déterminer la cohomologie de $\Of^{(r)}$ d'un arrangement $\AC$ tubulaire fermé d'ordre $n$. Nous souhaitons établir :
\begin{theo}\label{theoacyco+arrtubfer}
Les arrangements tubulaires fermés ${\rm Int}(\AC)$ sont $\Of^{(r)}$-acycliques.
\end{theo}

Cela découle du principe général suivant :
\begin{lem}\label{lemmayervietitere}
Soit $X$ un $L$-espace analytique et $\UC=\{U_i :i\in I\}$ une famille d'ouverts de $X$. Soit $\hhh$ une théorie cohomologique vérifiant la suite exacte longue de Mayer-Vietoris tel que pour toute famille finie $J\subset I$, les unions $\uni{U_i}{i\in J}{}$ n'ont pas de cohomologie en degré supérieur ou égal à  $|J|$. Sous ces hypothèses, toutes les intersections finies non vides $\inter{U_i}{i\in J}{}$  sont acycliques pour la cohomologie $\hhh$.
\end{lem}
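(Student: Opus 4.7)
La preuve procéderait par récurrence sur $m=|J|$. Le cas $m=1$ est immédiat : si $J=\{i\}$, alors $\bigcap_{j\in J}U_j=U_i=\bigcup_{j\in J}U_j$, et l'hypothèse fournit $\hhh^q(U_i)=0$ pour tout $q\geq 1$.

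Pour l'étape de récurrence, supposons que, pour toute partie non vide $J'\subset I$ avec $|J'|<m$, l'intersection $U_{J'}:=\bigcap_{i\in J'}U_i$ soit $\hhh$-acyclique. Fixons $J\subset I$ avec $|J|=m$, et posons $V_J=\bigcup_{i\in J}U_i$ et $U_J=\bigcap_{i\in J}U_i$. L'ingrédient clé serait la suite spectrale de Mayer--Vietoris associée au recouvrement $\UC_J=\{U_i\}_{i\in J}$ de $V_J$,
\[
E_1^{p,q}=\prod_{\substack{J'\subset J\\|J'|=p+1}}\hhh^q(U_{J'})\Longrightarrow \hhh^{p+q}(V_J),
\]
obtenue, à partir de la seule suite exacte longue de Mayer--Vietoris, par itération sur la combinatoire du double complexe de Cech de $\UC_J$.

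Par hypothèse de récurrence, $\hhh^q(U_{J'})=0$ pour tout $q\geq 1$ et tout $J'\subsetneq J$ non vide ; sur la page $E_1$, dans les lignes $q\geq 1$, les colonnes $p\in\{0,\dots,m-2\}$ sont donc nulles, et $E_1^{m-1,q}=\hhh^q(U_J)$. Les colonnes $p\geq m$ le sont également puisqu'aucune partie de $J$ n'a cardinal $>m$. Toutes les différentielles $d_r$ ($r\geq 1$) entrant ou sortant de $E_r^{m-1,q}$ relient ce terme à des régions d'annulation, d'où $E_\infty^{m-1,q}=\hhh^q(U_J)$ pour $q\geq 1$. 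Ce terme étant un sous-quotient de $\hhh^{m-1+q}(V_J)$, et $m-1+q\geq |J|$ dès que $q\geq 1$, l'hypothèse sur les unions force $\hhh^{m-1+q}(V_J)=0$, puis $\hhh^q(U_J)=0$ pour tout $q\geq 1$, ce qui conclut la récurrence.

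Le point le plus délicat est de disposer de cette suite spectrale à partir uniquement de la suite exacte longue de Mayer--Vietoris : dans les cadres cohomologiques usuels de faisceaux utilisés dans le reste de l'article, elle est standard. En toute généralité, on la remplacerait par une double récurrence sur $(m,q)$ déroulant la suite exacte de Mayer--Vietoris sur la décomposition $V_J=V_{J\setminus\{i_0\}}\cup U_{i_0}$ (avec $i_0\in J$ fixé) et l'égalité $V_{J\setminus\{i_0\}}\cap U_{i_0}=\bigcup_{i\neq i_0}(U_i\cap U_{i_0})$, en contrôlant pas à pas les termes intermédiaires à l'aide de l'hypothèse de récurrence appliquée aux sous-familles appropriées.
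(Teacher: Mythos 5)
Votre preuve est correcte pour les théories de cohomologie de faisceaux auxquelles le lemme est effectivement appliqué dans l'article, mais elle emprunte une route réellement différente de celle du texte. Vous raisonnez par récurrence sur $|J|$ et isolez $\hhh^q(U_J)$ comme le terme de coin $E_\infty^{|J|-1,q}$ de la suite spectrale de Cech/Mayer--Vietoris du recouvrement $\{U_i\}_{i\in J}$ de $\bigcup_{i\in J}U_i$, que vous tuez ensuite grâce à l'annulation de la cohomologie de cette union en degré $\geq |J|$ ; le contrôle des différentielles entrantes et sortantes est exact. L'article, lui, évite toute suite spectrale (l'introduction présente précisément ce lemme comme un substitut à la suite spectrale de Schneider--Stuhler) : il raisonne par récurrence sur le nombre $n$ d'ouverts, introduit la famille auxiliaire $V_i=U_i\cap U_n$ pour $1\leq i\leq n-1$, et n'utilise que la suite de Mayer--Vietoris à deux ouverts appliquée à $U^J\cup U_n$ pour vérifier que les unions $\bigcup_{i\in J}V_i$ n'ont pas de cohomologie en degré $\geq |J|$, ce qui permet d'appliquer l'hypothèse de récurrence à la famille $\{V_i\}$ et de conclure pour $\bigcap_i V_i=\bigcap_{i=1}^n U_i$. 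L'avantage de cette voie est qu'elle n'emploie que l'hypothèse énoncée --- la suite exacte longue pour deux ouverts --- alors que votre argument principal requiert la suite spectrale complète du recouvrement, structure supplémentaire que l'énoncé abstrait ne garantit pas littéralement ; vous le signalez vous-même, et la double récurrence de repli que vous esquissez sur la décomposition $V_J=V_{J\setminus\{i_0\}}\cup U_{i_0}$ est, une fois déroulée, essentiellement l'argument de l'article, mais vous la laissez à l'état d'esquisse. Pour les faisceaux $\Of^{(r)}$, $\Of^{**}$ et $\G_m$ sur le site analytique, la suite spectrale de Cech est bien disponible, donc votre démonstration est recevable dans ce cadre.
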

Il est à  noter que d'après \ref{coroorptxdt}, les complémentaires des voisinages tubulaires ouverts d'hyperplans vérifient les hypothèses pour $\hhh$  la cohomologie analytique à  coefficients dans $\Of^{(r)}$. En effet, pour $\AC$ un arrangement tubulaire fermé, la cohomologie d'un espace de la forme ${\rm Uni}(\BC)$, avec $\BC\subset\AC$, s'annule en degré supérieur ou égal à  ${\rm rg}(\BC)	\le |\BC|$.

\begin{rem}

Notons que pour appliquer le résultat \ref{lemmayervietitere}, nous avons seulement utilisé le fait que la cohomologie des fibrations $X^d_t(\beta)$ était concentrée entre les degrés $0$  et $t$. Cette propriété se déduit directement du théorème \ref{theovdp} par comparaison avec la cohomologie de Cech sur le recouvrement $f^*(\VC (\beta))$. Nous pouvons alors nous passer du calcul explicite de ces groupes qui constituent le cœur technique de la preuve de \ref{coroorptxdt}. Toutefois, la description qui en découle servira de manière cruciale dans la preuve du lemme \ref{lemobsarrtubferor}.

\end{rem}

\begin{proof} On peut supposer que $I=\left\llbracket 1,n\right\rrbracket $ et on raisonne par r\'ecurrence sur 
$n$, le cas $n=1$ \'etant \'evident. Supposons que le r\'esultat est vrai pour 
$n-1$. Il suffit de d\'emontrer l'acyclicit\'e de $Y=\bigcap_{i=1}^n U_i$ (les autres intersections 
 \'etant trait\'ees par l'hypoth\`ese de r\'ecurrence). Notons $V_i=U_i\cap U_n$ pour $1\leq i\leq n-1$ et observons que $Y=V_1\cap...\cap V_{n-1}$. Il suffit donc (gr\^ace \`a l'hypoth\`ese de r\'ecurrence) de montrer que la cohomologie de $\bigcup_{i\in J} V_i$ s'annule en degr\'e 
sup\'erieur ou \'egal \`a $|J|$ quand $J\subset \left\llbracket 1,n-1\right\rrbracket $. Soit donc 
$k\geq |J|$ et $V^J=\bigcup_{i\in J} V_i=U^J\cap U_n$, o\`u $U^J=\bigcup_{i\in J} U_i$. Une partie de la suite de Mayer-Vietoris 
s'\'ecrit 
   $$   \hhh^k(U^J\cup U_n)\to \hhh^k(U^J)\oplus \hhh^k(U_n)\to \hhh^k(V^J)\to \hhh^{k+1}(U^J\cup U_n).$$
   Puisque $k+1\geq |J\cup\{n\}|$, le terme $\hhh^{k+1}(U^J\cup U_n)$ s'annule par hypoth\`ese, et 
   il en est de m\^eme de $\hhh^k(U_n)$ et $\hhh^k(U^J)$, donc aussi de $\hhh^k(V^J)$, ce qui permet de conclure.

\end{proof}

Nous pouvons aussi tirer des informations importantes sur les sections globales à puissances bornées des arrangements ${\rm Int}(\AC)$. Nous commencerons par ce lemme général.

\begin{lem}\label{lemgam}

Soit $X$ un espace analytique, $\Ff$ un faisceau en groupes abéliens et   $\UC=\{U_i\}$  une famille d'ouverts de $X$ tel que toute intersection finie\footnote{cf \ref{fnotint} pour la notation $U_I$} $U_I$ est $\Ff$-acyclique. Dans ce cas, on a :\[\Ff (U_I)=\sum_{J\in E_I}r_{J,I}(\Ff (U_J))\]
où $E_I=\{J\subset I: J\neq \emptyset \et \han{|J|-1}(\bigcup_{j\in J}U_j,\Ff)\neq 0\}$ et $r_{J,I}:\Ff (U_J)\to \Ff (U_I)$ est la flèche de restriction.

\end{lem}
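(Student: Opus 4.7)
The plan is to argue by induction on $n := |I|$, using Leray's theorem to identify the Čech cohomology of the cover $\{U_i\}_{i \in I}$ of $W := \bigcup_{i \in I} U_i$ with $\han{\ast}(W, \Ff)$. The hypothesis that every finite intersection $U_J$ is $\Ff$-acyclic is precisely what makes this identification valid, and this will be the only nontrivial input.

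The base case $n = 1$ is immediate: either $\Ff(U_i) = 0$ and both sides vanish, or $\{i\} \in E_I$ and the right-hand side contains $r_{I,I}(\Ff(U_i)) = \Ff(U_i)$. For the inductive step with $n \geq 2$, I first dispose of the trivial case $I \in E_I$, where taking $J = I$ already gives $\Ff(U_I)$ on the right-hand side. Otherwise $\han{n-1}(W, \Ff) = 0$, and I look at the top of the Čech complex (which has length $n$):
\[ \bigoplus_{J \subset I,\; |J| = n-1} \Ff(U_J) \xrightarrow{\; d \;} \Ff(U_I) \longrightarrow 0. \]
The differential $d$ is the alternating sum of the restrictions $r_{J,I}$, and its cokernel is the top Čech cohomology, which equals $\han{n-1}(W, \Ff) = 0$. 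Hence $d$ is surjective, giving $\Ff(U_I) = \sum_{J \subset I,\; |J| = n-1} r_{J,I}(\Ff(U_J))$.

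Next I apply the inductive hypothesis to each such $J$, viewed as the index set for the subfamily $\{U_j\}_{j \in J}$ of $\UC$ (whose intersections are still $\Ff$-acyclic). This yields $\Ff(U_J) = \sum_{K \in E_J} r_{K,J}(\Ff(U_K))$. The key combinatorial observation is that membership in $E_\bullet$ depends only on the subset $K$ itself and not on the ambient index set, so $E_J \subset E_I$; combined with the transitivity $r_{K,I} = r_{J,I} \circ r_{K,J}$, this gives the inclusion $\Ff(U_I) \subset \sum_{K \in E_I} r_{K,I}(\Ff(U_K))$, and the reverse inclusion is tautological.

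There is no real obstacle in this argument; the conceptual content is merely that vanishing of the top Čech cohomology is equivalent to the last differential being surjective, i.e.\ to every section on the full intersection being an alternating sum of restrictions from the $(n-1)$-fold intersections. The induction then peels off one layer at a time, stopping as soon as one reaches a subset $K$ for which $\han{|K|-1}(\bigcup_{k \in K} U_k, \Ff) \neq 0$, which is exactly the condition $K \in E_I$.
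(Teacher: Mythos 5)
Your proof is correct and follows essentially the same route as the paper's: induction on $|I|$, dispatching the case $I\in E_I$ tautologically, otherwise using Leray and the vanishing of the top Čech cohomology to write $\Ff(U_I)$ as the image of the last Čech differential (hence as $\sum_{|J|=n-1}r_{J,I}(\Ff(U_J))$), and concluding via the inductive hypothesis together with the observations $E_J\subset E_I$ and $r_{K,I}=r_{J,I}\circ r_{K,J}$. Nothing to add.
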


\begin{proof}

On raisonne par récurrence sur le cardinal de $I$. Le résultat est trivial quand ce dernier vaut 1. Fixons $I$ et supposons le résultat pour toute partie stricte de $I$. Si $\han{|I|-1}(\bigcup_{i\in I}U_i,\Ff)\neq 0$, c'est tautologique car $I\in E_I$. Sinon, on a par hypothèse
\[\han{|I|-1}(\bigcup_{i\in I}U_i,\Ff)=\hcech{|I|-1}(\bigcup_{i\in I}U_i,\{U_i:i\in I\},\Ff)=\Ff (U_I)/\sum_{i\in I}r_{I\backslash\{i\},I}(\Ff (U_{I\backslash\{i\}}))=0.\]
Mais par hypothèse de récurrence, \[\sum_{i\in I}r_{I\backslash\{i\},I}(\Ff (U_{I\backslash\{i\}}))= \sum_{i\in I} \sum_{J\in E_{I\backslash\{i\}}}r_{J,I}(\Ff (U_J))=\sum_{J\in E_I}r_{J,I}(\Ff (U_J))\] car $E_I=\bigcup_{i\in I} E_{I\backslash\{i\}}$. Le résultat s'en déduit.

\end{proof}

\begin{coro}[Décomposition en éléments simples]\label{corodecsimp}

Soit $\AC$ un arrangement tubulaire fermé, on a 
\[\Of^{(r)}({\rm Int}(\AC))=\som{\Of^{(r)}({\rm Int}(\BC))}{\BC\subset\AC\\ |\BC|={\rm rg}(\BC)<d+1}{}.\]

\end{coro}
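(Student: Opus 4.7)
L'approche que j'adopterais est une application directe du lemme général \ref{lemgam} à la famille d'ouverts $\UC=\{U_H:={\rm Int}(\{H\})\}_{H\in\AC}$ de ${\rm Int}(\AC)$, avec $\Ff=\Of^{(r)}$. Tout d'abord, j'observe que pour toute partie non vide $\BC\subset\AC$, l'intersection $\bigcap_{H\in\BC}U_H$ coïncide avec ${\rm Int}(\BC)$, laquelle est $\Of^{(r)}$-acyclique par le théorème \ref{theoacyco+arrtubfer}. L'hypothèse d'acyclicité de \ref{lemgam} est donc satisfaite, et sa conclusion s'écrit
\[
\Of^{(r)}({\rm Int}(\AC))=\sum_{\BC\in E_\AC} r_{\BC,\AC}\bigl(\Of^{(r)}({\rm Int}(\BC))\bigr),
\]
où $E_\AC$ est l'ensemble des $\BC\subset\AC$ non vides telles que $\han{|\BC|-1}(\bigcup_{H\in\BC}U_H,\Of^{(r)})\neq 0$, et où $r_{\BC,\AC}$ est précisément la restriction $\Of^{(r)}({\rm Int}(\BC))\to\Of^{(r)}({\rm Int}(\AC))$ associée à l'inclusion ${\rm Int}(\AC)\subset{\rm Int}(\BC)$.

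Il ne reste qu'à identifier $E_\AC$ avec l'ensemble d'indexation de l'énoncé. Puisque $\bigcup_{H\in\BC}U_H={\rm Uni}(\BC)$, je m'appuierais sur la description géométrique de la section \ref{ssectionfibr}, qui identifie ${\rm Uni}(\BC)$ à la fibration $X^d_t(\beta)$ avec $t+1={\rm rg}(\BC)$, et sur le corollaire \ref{coroorptxdt}, qui montre que la cohomologie de $\Of^{(r)}$ y est concentrée en degrés $0$ et $t$. Le degré $0$ vaut $\OC_L^{(r)}$, tandis que le degré $t$ est le complété $p$-adique de $\bigoplus_{\alpha\in\N^{d-t},\,|\alpha|\geq t+1}\hzar{t}(\P^t_{zar,\OC_L},\Of(-|\alpha|))\otimes\OC_L^{(r)}$, qui est non nul si et seulement si $t<d$. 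En discutant selon que $|\BC|-1=0$ (cas $|\BC|=1$, automatiquement dans $E_\AC$ via le degré $0$) ou $|\BC|-1=t$ avec $t\geq 1$ (imposant à la fois $|\BC|={\rm rg}(\BC)$ et $t<d$), et en observant qu'aucun autre degré ne contribue, on obtient que $E_\AC$ est exactement $\{\BC\subset\AC:\BC\neq\emptyset,\,|\BC|={\rm rg}(\BC)<d+1\}$.

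La principale subtilité est précisément cette identification combinatoire de $E_\AC$; tout le reste découle immédiatement de \ref{theoacyco+arrtubfer} et du calcul explicite de \ref{coroorptxdt}. Une fois cette identification établie, la formule du corollaire coïncide mot pour mot avec la conclusion du lemme \ref{lemgam}, ce qui achève la preuve.
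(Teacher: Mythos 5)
Votre preuve est correcte et suit exactement la démarche du papier : application du lemme \ref{lemgam} à la famille $\{{\rm Int}(\{H\})\}_{H\in\AC}$ (licite grâce à \ref{theoacyco+arrtubfer}), puis identification de $E_\AC$ avec $\{\BC\subset\AC : |\BC|={\rm rg}(\BC)<d+1\}$ via le calcul de la cohomologie des fibrations ${\rm Uni}(\BC)\cong X^d_t(\beta)$, concentrée en degrés $0$ et $t={\rm rg}(\BC)-1$ et nulle en degré $t$ lorsque $t=d$. Le papier se contente d'énoncer cette identification en renvoyant à \ref{theocohoano+xdt}; vous l'avez explicitée correctement.
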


\begin{proof}
On reprend les notations du lemme précédent. On remarque  l'identité\footnote{Notons que lorsque l'on a l'égalité $|\BC|={\rm rg}(\BC)=d+1$, on a ${\rm Uni(\BC)}=\P_{rig, L}^d$ qui est $\Of^{(r)}$-acyclique
d'après \ref{lemo+ptrig}} $E_{\AC}=\{\BC\subset\AC : |\BC|={\rm rg}(\BC)<d+1\}$ d'après \ref{theocohoano+xdt} et on conclut.
\end{proof}

\begin{lem}\label{lemobsarrtubferor}
Soit $\AC_{n}$ un arrangement tubulaire fermé d'ordre $n>d$ et $\AC_{n-d}$  la restriction de $\AC_n$ d'ordre $n-d$. On a l'inclusion : \[\Of^{(r)}({\rm Int}(\AC_{n}))\subset \OC_L^{(r)}+\varpi\Of^{(r)}({\rm Int}(\AC_{n-d})).\] 

\end{lem}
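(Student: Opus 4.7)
The plan is to reduce the claim to simple sub-arrangements via \ref{corodecsimp} and then prove by induction on the rank of the simple sub-arrangement a ``shift by rank'' refinement of the lemma. Any $f \in \Of^{(r)}({\rm Int}(\AC_n))$ decomposes as $f = \sum_{\BC} f_{\BC}$ with $\BC \subset \AC_n$ simple and $f_{\BC} \in \Of^{(r)}({\rm Int}(\BC))$, and since ${\rm Int}(\AC_{n-d}) \subset {\rm Int}(\BC_{n-d})$ it suffices to prove the inclusion when $\AC_n = \BC$ is simple, with target $\Of^{(r)}({\rm Int}(\BC_{n-d}))$. For simple $\BC$ of rank $s+1 \le d$ I will prove by induction on $s+1$ that, for $n \ge s+1$, the restriction $\Of^{(r)}({\rm Int}(\BC_n)) \to \Of^{(r)}({\rm Int}(\BC_{n-s-1}))$ has image in $\OC_L^{(r)} + \varpi\Of^{(r)}({\rm Int}(\BC_{n-s-1}))$; a further restriction to ${\rm Int}(\BC_{n-d}) \subset {\rm Int}(\BC_{n-s-1})$ (possible since $d \ge s+1$) then yields the lemma. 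The base case $s = 0$ identifies ${\rm Int}(\BC_n)$ with the polydisk $\B^d(-n)$ and follows from one application of \ref{lemobspolycourdisq}.

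The key technical step for the inductive step is the cohomology claim that the restriction map $\han{s}({\rm Uni}(\BC_n), \Of^{(r)}) \to \han{s}({\rm Uni}(\BC_{n-1}), \Of^{(r)})$ has image in $\varpi\,\han{s}({\rm Uni}(\BC_{n-1}), \Of^{(r)})$. This should follow from the explicit fibration $f: {\rm Uni}(\BC_n) = X^d_s(\beta) \to \P^s_{rig,L}$: by \ref{remhtanxdt} the top cohomology appears as a direct summand of the quotient $\Of^+(\B_{V(\beta)_I}^{d-s}(-\beta_i))/\Of^+(V(\beta)_I)$, and the order shift $n \mapsto n-1$ decreases each $\beta_i$ by $1$, shrinking the fiber polyradius by a factor $|\varpi|$. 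The relative polycouronne lemma \ref{lemobspolycourdisqrel} (with base $Y = V(\beta)_I$, extended from $\Of^+$ to $\Of^{(r)}$ by tensoring with $\OC_L^{(r)}$) then shows that the image of $\Of^{(r)}$ of the larger fibered polydisk is contained in $\Of^{(r)}(V(\beta)_I) + \varpi\Of^{(r)}$(smaller polydisk), and passing to the quotient yields the claim.

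To close the inductive step, I will take the Cech complex of ${\rm Uni}(\BC_n)$ for the admissible cover $\{\bar H(|\varpi|^n)^c\}_{H \in \BC_n}$: by \ref{theoacyco+arrtubfer} (applied to proper sub-arrangements) all intersections are $\Of^{(r)}$-acyclic, and by \ref{coroorptxdt} the cohomology is concentrated in degrees $0$ and $s$, so the top Cech term $\Of^{(r)}({\rm Int}(\BC_n))$ surjects onto $\han{s}({\rm Uni}(\BC_n), \Of^{(r)})$. Combined with the cohomology fact above, any $f \in \Of^{(r)}({\rm Int}(\BC_n))$ restricts as $f|_{{\rm Int}(\BC_{n-1})} = \varpi g' + \partial b$ for some $g' \in \Of^{(r)}({\rm Int}(\BC_{n-1}))$ and Cech cochain $b = (b_I)_{|I|=s,\, I \subset \BC_{n-1}}$ with $b_I \in \Of^{(r)}({\rm Int}(I_{n-1}))$. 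Applying the inductive hypothesis to each $b_I$ (after decomposing via \ref{corodecsimp} in case the rank of $I$ drops at order $n-1$), and restricting further to ${\rm Int}(\BC_{n-1-s}) \subset {\rm Int}(I_{n-1-s})$ (valid since $I \subset \BC$), combined with $\varpi g'$, closes the induction with total shift $s+1$. The main obstacle is the cohomology restriction claim of the second paragraph; once it is in hand the remainder is a bookkeeping induction whose only subtlety is ensuring the orders at each step remain large enough for the hypothesis to apply, which is guaranteed by the shift-by-rank formulation together with $n > d$.
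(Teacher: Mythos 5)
Votre démonstration est correcte et suit essentiellement la même stratégie que celle de l'article : réduction aux arrangements simples par \ref{corodecsimp}, récurrence sur le rang avec un décalage d'ordre égal au rang, cas de base via \ref{lemobspolycourdisq}, et pas de récurrence reposant sur l'inclusion ${\rm Im}(\varphi^{(r)})\subset\varpi\han{t}({\rm Uni}(\AC_{n-1}),\Of^{(r)})$ obtenue par \ref{remhtanxdt} et \ref{lemobspolycourdisqrel}, puis sur la présentation de $\han{t}$ comme quotient du dernier terme du complexe de Cech. La seule différence est cosmétique (l'article formule l'hypothèse de récurrence pour tout arrangement de rang $\le t+1$ plutôt que de redécomposer les cochaînes $b_I$ en éléments simples, et traite explicitement le cas où le rang chute en passant à l'ordre $n-1$).
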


\begin{proof}
D'après le résultat précédent, on peut supposer $|\AC_{n}|={\rm rg}(\AC_{n})<d+1$. On raisonne par récurrence sur $t={\rm rg}(\AC_{n})-1$. Plus précisément, nous montrons que pour tout arrangement tubulaire fermé $\BC_m$ d'ordre $m$ quelconque vérifiant ${\rm rg}(\BC_m)\le t+1$, on a l'inclusion $\Of^{(r)}({\rm Int}(\BC_{m}))\subset \OC_L^{(r)}+\varpi\Of^{(r)}({\rm Int}(\BC_{m-(t+1)}))$. 

Quand  $t=0$, cela découle du cas de la boule qui a été traité dans \ref{lemobspolycourdisq}.  Supposons l'énoncé vrai pour $t-1$ et montrons le résultat pour l'arrangement $\AC_{n}$ de rang $t+1$. Notons $\AC_{n-1}$ la projection de $\AC_n$ d'ordre $n-1$. On a des recouvrements naturels ${\rm Uni}(\AC_{n})=\bigcup_{H\in \AC_{n}}\mathring{H}(|\varpi|^{n})^c$ et ${\rm Uni}(\AC_{n-1})=\bigcup_{H\in \AC_{n}}\mathring{H}(|\varpi|^{n-1})$ de cardinal\footnote{Nous nous autorisons des répétitions dans le deuxième recouvrement.} $t+1$ que l'on notera $\AC_{n}^c$ et $\AC_{n-1}^c$. 

Les intersections d'éléments de $\AC_{n}^c$ ou de $\AC_{n-1}^c$ sont $\Of^{(r)}$-acycliques d'après le théorème \ref{theoacyco+arrtubfer} et on peut calculer la cohomologie des espaces ${\rm Uni}(\AC_{n})$ et ${\rm Uni}(\AC_{n-1})$ via les complexes de Cech sur ces recouvrements. Ces derniers sont concentrés entre les degrés $0$ et $t$, on en déduit un isomorphisme\footnote{Ici, $\delta$ désigne la différentielle du complexe $\ccech{\bullet}({\rm Uni}(\AC_n), \Of^{(r)},\AC_{n}^c)$.} $\han{t} ({\rm Uni}(\AC_n), \Of^{(r)})\cong \ccech{t}({\rm Uni}(\AC_n), \Of^{(r)},\AC_{n}^c)/\delta (\ccech{t-1}({\rm Uni}(\AC_n), \Of^{(r)},\AC_{n}^c))$ (idem pour $\AC_{n-1}$). De plus, ces deux recouvrements sont compatibles avec l'inclusion ${\rm Uni}(\AC_{n-1})\subset {\rm Uni}(\AC_{n})$, d'où une flèche entre les complexes de Cech qui induit le morphisme fonctoriel $\han{t} ({\rm Uni}(\AC_{n}), \Of^{(r)})\fln{\varphi^{(r)}}{} \han{t} ({\rm Uni}(\AC_{n-1}), \Of^{(r)})$. En explicitant ces complexes,  on obtient un 
diagramme commutatif dont les lignes horizontales sont exactes :

\begin{equation}
 \label{diagobsor}
\xymatrix{ 
\sum_{a\in \AC_n} \Of^{(r)}({\rm Int}(\AC_n\backslash\{a\}))\ar[d]^{}   \ar[r]^{}  &\Of^{(r)}({\rm Int}(\AC_n)) \ar[d]^{}  \ar[r]^{} & \han{t} ({\rm Uni}(\AC_n), \Of^{(r)}) \ar[d]^{} \ar[r]^{}    & 0 \\
\sum_{a\in \AC_{n-1}} \Of^{(r)}({\rm Int}(\AC_{n-1}\backslash\{a\}))\ar[r]^{}   &\Of^{(r)}({\rm Int}(\AC_{n-1})) \ar[r]^{} &\han{t} ({\rm Uni}(\AC_{n-1}), \Of^{(r)})\ar[r]^{}& 0}.
\end{equation} 
On veut montrer
 \begin{equation}\label{eqinclphi+}
 {\rm Im}(\varphi^{(r)})\subset\varpi\han{t} ({\rm Uni}(\AC_{n-1}), \Of^{(r)}).
 \end{equation}
 
 Quand ${\rm rg}(\AC_{n-1})<{\rm rg}(\AC_{n})$, l'inclusion est triviale car $\han{t} ({\rm Uni}(\AC_{n-1}), \Of^{(r)})=0$. 
 
 Si ${\rm rg}(\AC_{n-1})={\rm rg}(\AC_{n})$, on a des isomorphismes compatibles \[{\rm Uni}(\AC_{n})\cong X^d_t(\beta)\et {\rm Uni}(\AC_{n})\cong X^d_t(\tilde{\beta})\] avec $\tilde{\beta}:=\beta -(1,\cdots,1)$. D'après \ref{remhtanxdt}, $\han{t} ({\rm Uni}(\AC_{n}), \Of^{(r)})$   est un facteur direct de \[\Of^{(r)}( \B^{d-t}_{V(\beta)_I}(-\beta_i))/\Of^{(r)}(V(\beta)_I)\] pour $I=\left\llbracket 0,t\right\rrbracket$, $i\in I$ fixé (idem pour ${\rm Int}(\AC_{n-1})$). De plus, la flèche $\varphi^{(r)}$ est    induite par la restriction  naturelle (notons l'égalité $V(\beta)_I=V(\tilde{\beta})_I$)\[\Of^{(r)}( \B^{d-t}_{V(\beta)_I}(-\beta_i))/\Of^{(r)}(V(\beta)_I)\to \Of^{(r)}( \B^{d-t}_{V(\beta)_I}(-(\beta_i-1)))/\Of^{(r)}(V(\beta)_I)\]  et l'image de  $\varphi^{(r)}$ est contenue dans $\varpi \Of^{(r)}( \B^{d-t}_{V(\beta)_I}(-(\beta_i-1)))/\Of^{(r)}(V(\beta)_I)$ d'après \ref{lemobspolycourdisqrel} ce qui entraîne \eqref{eqinclphi+}. 
 
 D'après \eqref{diagobsor} et   \eqref{eqinclphi+}, on obtient, pour toute fonction $f\in \Of^{(r)}({\rm Int}(\AC_n))$, une décomposition dans $\Of^{(r)}({\rm Int}(\AC_{n-1}))$
\[f=\sum_{a\in \AC_{n-1}}f_a +g\]
avec $f_a\in\Of^{(r)}({\rm Int}(\AC_{n-1}\backslash\{a\}))$ et $g\in \varpi\Of^{(r)}({\rm Int}(\AC_{n-1}))$. Comme ${\rm rg}(\AC_{n-1}\backslash\{a\})<{\rm rg}(\AC_{n})$, on a par hypothèse de récurrence \[f_a\in\OC_L^{(r)}+\varpi\Of^{(r)}({\rm Int}(\AC_{n-1-t}\backslash\{a\}))\subset\OC_L^{(r)}+\varpi\Of^{(r)}({\rm Int}(\AC_{n-(t+1)}))\] ce qui établit le résultat.
\end{proof}
\begin{coro}\label{lemobsarrtubferadd}
Soit $\AC_{n}$ un arrangement tubulaire fermé d'ordre $n>2d$ et $\AC_{n-2d}$  la projection de $\AC_n$ d'ordre $n-2d$. On a l'inclusion : \[\Of^{+}({\rm Int}(\AC_{n}))\subset \OC_L+\varpi\Of^{
+}({\rm Int}(\AC_{n-2d}))\] 

\end{coro}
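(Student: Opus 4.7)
The plan is to derive this corollary as an immediate consequence of the preceding Lemma \ref{lemobsarrtubferor} combined with the second implication of Proposition \ref{proprlimo+o++}. Writing $V_k := {\rm Int}(\AC_k)$ for the open complements of the projected tubular arrangements, Lemma \ref{lemobsarrtubferor} gives, for every $m > 0$ and every $r > 0$,
\[\Of^{(r)}(V_{m+d}) \subset \OC_L^{(r)} + \Of^{(r|\varpi|)}(V_m) = \OC_L^{(r)} + \varpi \Of^{(r)}(V_m),\]
which is precisely the hypothesis of the second implication of Proposition \ref{proprlimo+o++} with the constant $c = d$. Applying that implication directly gives, for every $m > 0$,
\[\Of^+(V_{m+2d}) \subset \OC_L + \varpi \Of^+(V_m),\]
i.e.\ the stated inclusion (taking $n = m + 2d > 2d$).

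If one prefers to avoid invoking Proposition \ref{proprlimo+o++} and argue by hand, the idea is to apply Lemma \ref{lemobsarrtubferor} twice in succession (once to $\AC_n$ and once to $\AC_{n-d}$), which gives
\[\Of^{(r)}(V_n) \subset \OC_L^{(r)} + \varpi \OC_L^{(r)} + \varpi^2 \Of^{(r)}(V_{n-2d}) \subset \OC_L^{(r)} + \varpi^2 \Of^{(r)}(V_{n-2d}).\]
One then fixes $r$ in the range $1 < r \leq |\varpi|^{-1}$, for which $\OC_L^{(r)} = \OC_L$ and $\varpi \Of^{(r)} \subset \Of^+$ by the discreteness of the valuation on $L$, and rewrites any $f \in \Of^+(V_n) \subset \Of^{(r)}(V_n)$ as $f = \lambda + \varpi(\varpi g)$ with $\lambda \in \OC_L$ and $\varpi g \in \Of^+(V_{n-2d})$.

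There is no substantive obstacle at this stage: the real work (the induction on the rank and the combinatorial Čech analysis) has already been carried out in Lemma \ref{lemobsarrtubferor}, and the delicate passage between sheaves of functions with different norm bounds (with its characteristic loss of a factor of two in the length constant) was dealt with once and for all in Proposition \ref{proprlimo+o++}. The corollary is therefore a bookkeeping step, whose purpose is to package the single-step $\Of^{(r)}$-estimate into the form needed for the eventual application of Proposition \ref{propr1lim} to the $\Of^+$-system $(\Of^+({\rm Int}(\AC_n)))_n$.
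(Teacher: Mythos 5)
Your first argument is exactly the paper's proof: the paper obtains the corollary by applying Lemma \ref{lemobsarrtubferor} with $r=1$ (using $\Of^{(r|\varpi|)}=\varpi\Of^{(r)}$) to verify the hypothesis of the second implication of Proposition \ref{proprlimo+o++} with constant $c=d$. The only blemish is in your optional hand-made variant, where you invoke discreteness of the valuation of $L$ (which is not assumed, e.g.\ $L=C$) to claim $\OC_L^{(r)}=\OC_L$ for $1<r\le|\varpi|^{-1}$; this is unnecessary anyway, since the constant $\lambda=f-\varpi^{2}g$ produced there has norm $\le 1$ directly by the ultrametric inequality.
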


\begin{proof}

Cela découle de \ref{proprlimo+o++} et de \ref{lemobsarrtubferor}.

\end{proof}

\subsection{Cohomologie analytique à  coefficients dans $1+\Of^{++}=\Of^{**}$\label{sssectioncohoanostst}}

Nous allons maintenant nous intéresser aux faisceaux $1+\Of^{++}$ et démontrer un théorème d'acyclicité semblable au théorème \ref{theoacyco+arrtubfer}. Le résultat suivant est le point clé de cette section. C'est une application du logarithme tronqué qui permet d'étudier $1+\Of^{++}$ par le biais de $\Of^{(r)}$. Ce résultat a été énoncé dans (\cite[ 3.26 remarque fin de page 195]{vdp}) par Van Der Put. Nous allons donner les détails de la preuve.

\begin{lem}\label{lemoroetun}

Soit $X$ un espace rigide quasi-compact
\begin{enumerate}
\item Si $X$ est $\Of^{(r)}$-acyclique pour tout $0< r \leq 1 $, alors $X$ est $1+ \Of^{++}$-acyclique
\item Si la cohomologie de $X$ à  coefficients dans $\Of^{(r)}$ est concentrée en degrés $0$ et $t$ pour tout $0<r \leq 1 $, et la flèche naturelle $\han{t}(X,\Of^{(r)})\to \han{t}(X,\Of^{(r')})$ est injective pour $r'\geq r$, alors la cohomologie de $X$ à  coefficients dans $1+\Of^{++}$ est concentrée en degrés $0$ et $t$.
\end{enumerate}

\end{lem}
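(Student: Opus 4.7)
The strategy is an iterative reduction via the logarithm, converting the hypotheses on $\Of^{(r)}$ into statements about the multiplicative sheaf $\Of^{**}=1+\Of^{++}$. The base ingredient is that for $r<r_0:=|p|^{1/(p-1)}$, the series $\log(1+x)=\sum_{k\ge1}(-1)^{k-1}x^k/k$ converges and defines an isomorphism of sheaves of groupes abéliens $\log:1+\Of^{(r)}\iso\Of^{(r)}$, with inverse $\exp$. This immediately gives $\han{i}(X,1+\Of^{(r)})\simeq\han{i}(X,\Of^{(r)})$, so the hypothesis yields vanishing for all $i\ge1$ in Part~(1) (resp. concentration in degrés $0$ et $t$ in Part~(2)).

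Quasi-compactness furnishes the starting radius. I fix a finite recouvrement affinoïde $\UC$ whose intersections sont assez simples (polydisques et polycouronnes, par raffinement si besoin) pour que la cohomologie de Čech y calcule la cohomologie dérivée des $\Of^{(r)}$ via \ref{theovdp}. For a Čech $i$-cocycle $\phi=(\phi_J=1+f_J)_J$ of $1+\Of^{++}$, les finies normes $|f_J|_{\sup}<1$ admettent un majorant commun $r<1$, donc $\phi$ vit dans $1+\Of^{(r)}$.

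The reduction step takes a cocycle $\phi$ in $1+\Of^{(r)}$ and modifie-le, par un cobord, en un cocycle dans $1+\Of^{(r^2)}$. La linéarisation additive $\tilde\phi:=(f_J)_J$ est une cochaîne dans $\Of^{(r)}$; en développant la relation multiplicative $\prod_k(1+f_{d_kK})^{(-1)^k}=1$ modulo les termes quadratiques on obtient $\delta\tilde\phi\in\check{\CC}^{i+1}(\UC,\Of^{(r^2)})$, c'est-à-dire $\tilde\phi$ est un cocycle dans le faisceau quotient $Q:=\Of^{(r)}/\Of^{(r^2)}$. La suite exacte longue de
\[0\to\Of^{(r^2)}\to\Of^{(r)}\to Q\to0\]
combinée à l'hypothèse donne $\han{i}(X,Q)=0$ en Part~(1) pour $i\ge1$. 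On relève alors une primitive $\eta\in\check{\CC}^{i-1}(\UC,\Of^{(r)})$ telle que $\delta\eta\equiv\tilde\phi\pmod{\Of^{(r^2)}}$, et l'on pose $\psi_J:=1+\eta_J$; le même développement quadratique donne $\delta\psi\equiv\phi\pmod{1+\Of^{(r^2)}}$, d'où $\phi\cdot(\delta\psi)^{-1}\in1+\Of^{(r^2)}$ : nouveau cocycle à rayon strictement plus petit.

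Par itération, les cocycles finissent dans $1+\Of^{(r^{2^n})}$; une fois $r^{2^n}<r_0$, le cas de base conclut. Le produit cumulé $\prod_k\psi_k$ converge dans $1+\Of^{++}$ car ses facteurs vivent dans des sous-faisceaux strictement décroissants, exhibant $\phi$ comme un cobord. Pour Part~(2), le même schéma tourne pour tout degré $i\notin\{0,t\}$; la difficulté principale — qui est le cœur technique de l'énoncé — est l'annulation $\han{t-1}(X,Q)=0$ en degré $i=t-1$: la suite exacte longue fournit $\han{t-1}(X,Q)\hookrightarrow\han{t}(X,\Of^{(r^2)})\to\han{t}(X,\Of^{(r)})$, et c'est précisément l'injectivité de la dernière flèche qui est l'hypothèse supplémentaire. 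Dans les autres degrés $0<i<t-1$ ou $i>t$, les deux termes extrêmes de la suite exacte longue s'annulent, rendant $\han{i}(X,Q)=0$ automatique.
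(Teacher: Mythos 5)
Your argument is correct and follows essentially the same route as the paper's proof: the short exact sequence $0\to 1+\Of^{(r^2)}\to 1+\Of^{(r)}\to \Of^{(r)}/\Of^{(r^2)}\to 0$, vanishing of the cohomology of the quotient outside degrees $0$ and $t$ (with the injectivity hypothesis entering exactly at degree $t-1$), iteration of $r\mapsto r^2$ down to a radius where $\log$ and $\exp$ identify $1+\Of^{(r)}$ with $\Of^{(r)}$, and quasi-compactness to pass from $1+\Of^{(r)}$ to $1+\Of^{++}=\varinjlim_{r\to 1^-}(1+\Of^{(r)})$. The only divergence is presentational: the paper runs the descent entirely through the long exact sequences of derived-functor cohomology (obtaining surjections $\han{s}(X,1+\Of^{(r^2)})\twoheadrightarrow \han{s}(X,1+\Of^{(r)})$ for $s\notin\{0,t\}$), whereas your Čech-cocycle packaging tacitly presupposes a finite covering with $\Of^{(r)}$-acyclic intersections, which an arbitrary quasi-compact rigid space need not admit --- but since your reduction step is precisely the connecting-map argument of those long exact sequences, this is a matter of formulation rather than a gap.
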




\begin{proof}
  On suppose que la cohomologie de $X$ à  coefficients dans $\Of^{(r)}$ est concentrée en degrés $0$ et $t$ pour tout $r>0$, et la flèche naturelle $\han{t}(X,\Of^{(r)})\to \han{t}(X,\Of^{(r')})$ est injective pour $r'\geq r$.   Le premier point s'en déduit quand $t=0$.  
Soit $s\notin \{0, t\}$, on veut l'annulation de la cohomologie de $1+\Of^{++}$ en degré $s$. Remarquons que $1+ \Of^{++}= \varinjlim_{ r \to 1^-} 1+ \Of^{(r)}$ donc (quasi-compacit\'e) :
\[ \han{s}(X ; 1+ \Of^{++})= \varinjlim_{ r \to 1^-} \han{s}(X ; 1+ \Of^{(r)}). \]
On fixe $r <1$. On a la suite exacte :
\[ 0 \to 1+ \Of^{(r^2)} \to 1+ \Of^{(r)} \to \Of^{(r)}/ \Of^{(r^2)} \to 0 \]
o\`u la surjection est donn\'ee par $(1+x) \mapsto x$. Par hypoth\`ese, $\Of^{(r)}/ \Of^{(r^2)}$ a une cohomologie analytique concentrée en degrés $0$ et $t$ d'apr\`es la suite exacte\footnote{Il est à  noter que nous avons utilisé l'hypothèse d'injectivité de $\han{t}(X,\Of^{(r)})\to \han{t}(X,\Of^{(r')})$ pour montrer l'annulation de $\han{t-1}(X ; 1+ \Of^{(r)})$} : 
\[ 0 \to \Of^{(r^2)} \to \Of^{(r)} \to \Of^{(r)}/\Of^{(r^2)} \to 0.\]
On a donc une surjection : 
\begin{equation}
\label{eqisorr2}
\han{s}(X ; 1+ \Of^{(r^2)}) \to \han{s}(X ; 1+ \Of^{(r)}).
\end{equation}

  Il suffit de prouver que $\han{s}(X ; 1+ \Of^{(r)})=0$
    pour $r$ petit. Si  $r<|p|$ et $\Vert x\Vert < r$, alors pour tout $n$, on a\footnote{Justifions la première inégalité. Par hypothèse, on a supposé que $p^{1+\varepsilon}$ divisait $x$ pour $\varepsilon$ un rationnel assez petit et on rappelle l'identité $v_p(n!)=\frac{n-s_p(n)}{p-1}\le n-1$ avec $s_p(n)$ la somme des chiffres dans  l'écriture de $n$ en base $p$ d'où $\Vert\frac{x^{n-1}}{n!}\Vert <1$. On en déduit alors $\Vert \frac{x^n}{n!}\Vert <\Vert x\Vert$.} \[\Vert \frac{x^n}{n!}\Vert <\Vert x\Vert  \et \Vert \frac{x^n}{n}\Vert <\Vert x\Vert .\] Les s\'eries usuelles du logarithme et de l'exponentielle sont bien définies et vérifient \[\Vert \exp(x)-1\Vert =\Vert \log (1+x)\Vert =\Vert x\Vert \] et elles induisent des morphismes inverses l'un de l'autre
\[\exp : \Of^{(r)} \xrightarrow{\sim} 1+\Of^{(r)}   \et \log :1+ \Of^{(r)} \xrightarrow{\sim} \Of^{(r)},\]
d'o\`u l'annulation de $\han{s}(X ; 1+ \Of^{(r)})$ par hypoth\`ese.  

\end{proof}


Les estimés \ref{theoacyco+arrtubfer}, \ref{lemo+ptrig}, \ref{theocohoano+xdt} et \ref{coroorptxdt} de la section \ref{sssectioncohoano+} permettent d'appliquer directement ce résultat et d'obtenir (on utilise partout la topologie analytique): 
\begin{coro}\label{coroostst}
On a
\begin{enumerate}
\item Les arrangements tubulaires fermés ${\rm Int(\AC)}$ sont $\Of^{**}$-acycliques.

\item Les espaces projectifs $\P^d_{rig,L}$ sont $\Of^{**}$-acycliques.

\item La cohomologie des fibrations $X^d_t(\beta)$ pour le faisceau $\Of^{**}$ est concentrée en degrés $0$ et $t$.

\end{enumerate}
\end{coro}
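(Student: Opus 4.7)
L'idée centrale est d'appliquer le lemme \ref{lemoroetun} (qui ramène l'étude de $1+\Of^{++}$ à celle de $\Of^{(r)}$ via le logarithme tronqué) à chacune des trois familles d'espaces de l'énoncé. Dans chaque cas, il faut vérifier la quasi-compacité de l'espace, puis fournir les hypothèses cohomologiques à partir des résultats déjà établis pour $\Of^{(r)}$ dans la section \ref{sssectioncohoano+}.

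Pour le point (1), les arrangements tubulaires fermés $\mathrm{Int}(\AC)$ sont affinoïdes d'après la remarque \ref{remarr}, donc quasi-compacts, et \ref{theoacyco+arrtubfer} garantit leur $\Of^{(r)}$-acyclicité pour tout $0 < r \leq 1$. Le point (1) de \ref{lemoroetun} s'applique directement et fournit l'acyclicité pour $\Of^{**}$. Pour le point (2), $\P^d_{rig,L}$ est quasi-compact et, en spécialisant \ref{coroorptxdt} (ou \ref{lemo+ptrig}) au faisceau non tordu $k=0$, on obtient $\han{s}(\P^d_{rig,L},\Of^{(r)})=0$ pour tout $s\geq 1$ et tout $0 < r \leq 1$. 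Une nouvelle application de \ref{lemoroetun}(1) conclut.

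Pour le point (3), la fibration $X^d_t(\beta)$ est quasi-compacte car admissiblement recouverte par la famille finie d'affinoïdes $f^*(\VC(\beta))$. Par \ref{coroorptxdt}, sa cohomologie à coefficients dans $\Of^{(r)}$ est concentrée en degrés $0$ et $t$ pour tout $0 < r \leq 1$. L'unique hypothèse à vérifier est alors l'injectivité des flèches de restriction
\[
\han{t}(X^d_t(\beta),\Of^{(r)})\longrightarrow \han{t}(X^d_t(\beta),\Of^{(r')})
\]
pour $r \leq r'$. Elle se déduit de l'injectivité analogue sur $\P^t_{rig,L}$ donnée par \ref{coroorptxdt}, combinée à la décomposition explicite de \ref{theocohoano+xdt} : le groupe de gauche est le complété $p$-adique d'une somme directe de $\han{t}(\P^t_{rig,L},\Of^{(r)}(k-|\alpha|))$, et la flèche est la complétion $p$-adique de la somme directe des morphismes injectifs correspondants, ce qui reste injectif puisque les modules sous-jacents sont sans $p$-torsion (grâce à leur description comme modules de polynômes via \ref{abstrait}). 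Le point (2) de \ref{lemoroetun} permet alors de conclure.

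Le seul point qui demande un soin particulier est cette vérification d'injectivité pour $X^d_t(\beta)$ ; une fois celle-ci acquise, le corollaire se ramène à trois applications mécaniques de \ref{lemoroetun}, les autres ingrédients étant fournis clef en main par la section précédente.
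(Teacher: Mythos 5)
Votre preuve est correcte et suit essentiellement la même démarche que l'article, qui se contente d'invoquer le lemme \ref{lemoroetun} combiné aux estimées \ref{theoacyco+arrtubfer}, \ref{lemo+ptrig}, \ref{theocohoano+xdt} et \ref{coroorptxdt}; vous explicitez simplement les vérifications (quasi-compacité, acyclicité pour $\Of^{(r)}$, injectivité des flèches $\han{t}(X^d_t(\beta),\Of^{(r)})\to\han{t}(X^d_t(\beta),\Of^{(r')})$) que l'article laisse implicites. Votre déduction de l'injectivité sur $X^d_t(\beta)$ à partir de celle sur $\P^t_{rig,L}$ via la décomposition en somme directe complétée de \ref{theocohoano+xdt} est légitime et correspond à ce que la preuve de \ref{coroorptxdt} établit déjà en tensorisant le complexe de Cech de $\Of^+$ par $\OC_L^{(r)}$.
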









Le résultat précédent nous permet d'appliquer \ref{lemgam} pour obtenir une version multiplicative de la décomposition en éléments simples \ref{corodecsimp}.

\begin{coro}[Décomposition en éléments simples]

Soit $\AC$ un arrangement tubulaire fermé, on a
\[\Of^{**}({\rm Int(\AC)})=\som{\Of^{**}({\rm Int(\BC)})}{\BC\subset\AC\\ |\BC|={\rm rg}(\BC)<d+1}{}.\]

\end{coro}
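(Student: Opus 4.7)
Le plan est de reproduire mot pour mot la stratégie de \ref{corodecsimp}, en remplaçant simplement $\Of^{(r)}$ par $\Of^{**}$ : c'est maintenant licite grâce au corollaire \ref{coroostst} qui nous fournit, pour $\Of^{**}$, exactement les mêmes propriétés d'annulation et de concentration que celles utilisées pour $\Of^{(r)}$. Concrètement, je considère le recouvrement ouvert $\UC=\{U_H\}_{H\in\AC}$ de ${\rm Int}(\AC)$, où $U_H:=\P^d_{rig,L}\setminus \bar{H}(|\varpi|^n)$. Pour toute partie $\BC\subset\AC$ on a les identifications $\bigcap_{H\in\BC}U_H={\rm Int}(\BC)$ et $\bigcup_{H\in\BC}U_H={\rm Uni}(\BC)$. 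Le point 1 du corollaire \ref{coroostst} garantit que chaque intersection finie ${\rm Int}(\BC)$ est $\Of^{**}$-acyclique, c'est-à-dire que les hypothèses de \ref{lemgam} sont vérifiées pour $\Ff=\Of^{**}$ et ce recouvrement.

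En appliquant \ref{lemgam} on obtient directement
\[\Of^{**}({\rm Int}(\AC))=\sum_{\BC\in E_{\AC}} r_{\BC,\AC}\bigl(\Of^{**}({\rm Int}(\BC))\bigr),\]
où $E_{\AC}=\{\BC\subset\AC : \BC\neq\emptyset \text{ et } \han{|\BC|-1}({\rm Uni}(\BC),\Of^{**})\neq 0\}$. Il reste donc à identifier $E_{\AC}$. Pour $\BC\subset\AC$ de rang $t+1$, l'espace ${\rm Uni}(\BC)$ est isomorphe à $X^d_t(\beta)$ (description rappelée dans la section \ref{ssectionfibr}) ; d'après le point 3 de \ref{coroostst} sa cohomologie pour $\Of^{**}$ est concentrée en degrés $0$ et $t$, tandis que dans le cas particulier ${\rm rg}(\BC)=d+1$ on a ${\rm Uni}(\BC)=\P^d_{rig,L}$ qui est $\Of^{**}$-acyclique par le point 2 de \ref{coroostst}. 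La non-nullité de $\han{|\BC|-1}({\rm Uni}(\BC),\Of^{**})$ impose donc $|\BC|-1=t={\rm rg}(\BC)-1$ avec ${\rm rg}(\BC)<d+1$, soit $|\BC|={\rm rg}(\BC)<d+1$. On reconnaît exactement l'ensemble indexant la somme de l'énoncé, ce qui conclut.

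Il n'y a pas véritablement d'obstacle ici : toute la difficulté a été concentrée dans l'établissement de \ref{coroostst} (acyclicité de $\Of^{**}$ sur les arrangements tubulaires fermés et concentration en deux degrés sur les $X^d_t(\beta)$), qui reposait lui-même sur le transfert via le logarithme tronqué \ref{lemoroetun} des résultats analogues pour $\Of^{(r)}$. La présente décomposition en éléments simples est désormais une conséquence purement formelle du lemme combinatoire \ref{lemgam}.
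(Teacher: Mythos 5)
Votre preuve est correcte et suit exactement la démarche du papier : celui-ci se contente d'ailleurs d'indiquer que le résultat s'obtient en appliquant \ref{lemgam} grâce à \ref{coroostst}, ce que vous détaillez fidèlement (acyclicité des intersections, puis identification de $E_{\AC}$ via la concentration de la cohomologie des $X^d_t(\beta)$ en degrés $0$ et $t$ et l'acyclicité de $\P^d_{rig,L}$). Seule coquille, sans incidence sur l'argument : pour un arrangement tubulaire \emph{fermé}, les éléments de $\AC$ sont les tubes \emph{ouverts}, donc les ouverts du recouvrement sont $U_H=\mathring{H}(|\varpi|^n)^c$ et non $\bar{H}(|\varpi|^n)^c$.
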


\section{Cohomologie analytique à  coefficients dans $\G_m$\label{sssectioncohoangm}}

\subsection{Cohomologie des fibrations $X_t^d(\beta)$\label{paragraphgmxdt}}

Nous souhaitons montrer le théorème suivant :
\begin{theo}\label{theocohoangmxdt}
Soit $s\geq 1$ et $t\ge 1$, les fonctions inversibles de $X_t^d (\beta)$ sont constantes et l'application $f^{*}$ en cohomologie donn\'ee par la fibration $f:X_t^d (\beta) \to \P^t_{ rig, L}$ induit une d\'ecomposition : 
\begin{equation}
\label{eqdecfibreGm}
\han{s} (X_t^d (\beta), \G_m)\cong \han{s} (X_t^d (\beta), \Of^{**})\times \han{s} (\P^t_{ rig, L}, \G_m).
\end{equation}
De plus, l'inclusion $\iota: X^d_t(\beta)\to \P^d_{ rig, L}$ induit une bijection entre $\han{*} (\P^d_{ rig, L}, \G_m)$ et le facteur direct $\han{*} (\P^t_{ rig, L}, \G_m)$. Enfin, pour tout corps $F$ on a une identification  \[\han{s} (\P^t_{rig, L}, \G_m)=\hzar{s} (\P^t_{zar, F}, \G_m)= \begin{cases}
\Z & {\rm si}\ s=1\\
\{0\} & {\rm si} \ s>1.
\end{cases}\]\end{theo}

\begin{coro}\label{corounigm}

Soit $\AC$ un arrangement tubulaire fermé, la cohomologie analytique à  coefficients dans $\G_m$ de ${\rm Uni}(\AC)$ s'annule en degré supérieur ou égal à ${\rm rg}(\AC)$

\end{coro}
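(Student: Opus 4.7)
Mon plan est d'identifier l'espace ${\rm Uni}(\AC)$ à l'une des fibrations $X_t^d(\beta)$ introduites à la section \ref{ssectionfibr}, puis d'appliquer directement la décomposition du théorème \ref{theocohoangmxdt}. Posons $t+1 := {\rm rg}(\AC)$. D'après la description géométrique de la section \ref{ssectionfibr} (qui généralise le calcul de l'exemple \ref{excard2}), quitte à effectuer un changement de coordonnées dans $\gln_{d+1}(\OC_K)$ (licite d'après \ref{remgH}), l'espace ${\rm Uni}(\AC)$ s'identifie à $X_t^d(\beta)$ pour un vecteur $\beta = (\beta_0, \ldots, \beta_t)$ de rationnels convenablement déterminé par les $a_H$.

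Pour $t \geq 1$, j'appliquerai la décomposition \eqref{eqdecfibreGm}: pour tout $s \geq t+1$,
\[\han{s}({\rm Uni}(\AC), \G_m) \simeq \han{s}(X_t^d(\beta), \Of^{**}) \times \han{s}(\P^t_{rig, L}, \G_m).\]
Le premier facteur s'annulera en vertu du troisième point du corollaire \ref{coroostst}, qui stipule que la cohomologie de $\Of^{**}$ sur $X_t^d(\beta)$ est concentrée en degrés $0$ et $t$, alors que $s \geq t+1 > t$. Quant au second facteur, il sera nul d'après la dernière identification de \ref{theocohoangmxdt}, puisque $\han{s}(\P^t_{rig, L}, \G_m) = 0$ pour $s \geq 2$ et que l'inégalité $s \geq t+1 \geq 2$ est satisfaite.

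Il restera à traiter le cas $t = 0$, c'est-à-dire ${\rm rg}(\AC) = 1$. Deux classes distinctes de $\HC_n$ engendrent toujours un sous-module de $\OC_K^{d+1}$ de rang au moins $2$ (leurs réductions modulo $\varpi^n$ définissant deux droites distinctes de $(\OC_K/\varpi^n)^{d+1}$), donc la condition ${\rm rg}(\AC) = 1$ force $|\AC| = 1$; l'espace ${\rm Uni}(\AC)$ sera alors le complémentaire d'un unique tube ouvert, qui s'identifie à une boule fermée (cf le début de la section \ref{ssectionfibr}), acyclique pour $\G_m$ en degré strictement positif grâce au théorème de van der Put \ref{theovdp}. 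Aucune étape de ce plan ne devrait présenter de difficulté technique notable, puisque l'essentiel du travail se trouve déjà encapsulé dans \ref{theocohoangmxdt} et \ref{coroostst}; le point le plus délicat est conceptuel plutôt que calculatoire, et consiste simplement à vérifier que le paramètre $t$ contrôlant la fibration coïncide bien avec ${\rm rg}(\AC) - 1$, ce qui résulte directement de la construction du changement de base utilisé dans \ref{defirgarr} et \ref{ssectionfibr}.
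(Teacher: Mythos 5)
Votre démonstration est correcte et suit exactement la voie que le papier sous-entend (le corollaire y est énoncé sans preuve séparée, comme conséquence immédiate du théorème \ref{theocohoangmxdt}) : on identifie ${\rm Uni}(\AC)$ à $X^d_t(\beta)$ avec $t+1={\rm rg}(\AC)$, puis la décomposition \eqref{eqdecfibreGm} jointe à \ref{coroostst} (concentration en degrés $0$ et $t$ pour $\Of^{**}$) et à l'annulation de $\han{s}(\P^t_{rig,L},\G_m)$ pour $s\geq 2$ donne le résultat pour $s\geq t+1$. Votre traitement explicite du cas $t=0$ --- où ${\rm rg}(\AC)=1$ force $|\AC|=1$ et ${\rm Uni}(\AC)$ est une boule fermée, $\G_m$-acyclique par \ref{theovdp} --- comble proprement un point que le papier laisse implicite.
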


\begin{proof}(\ref{theocohoangmxdt})
D'apr\`es \ref{theovdp} point 4., les recouvrements $\VC(\beta)$ et $f^*(\VC(\beta))$ ont des intersections $\G_m$-acycliques et on se ram\`ene \`a calculer la cohomologie de Cech sur ces recouvrements. Pour toute intersection $V(\beta)_{I}$, on fixe une trivialisation $f^{-1} (V(\beta)_I)\cong V(\beta)_I\times \B_L^{d-t}(-\beta_{i_0})$ pour $i_0\in I$ et on a d'après \ref{claimostrelpolicd} : 
\[\Of^* (V(\beta)_I)= L^* \Of^{**}(V(\beta)_I)\times T^{(t)}_I \] et 
\[\Of^* \left(f^{-1} (V(\beta)_I)\right)=L^*\Of^{**} \left(f^{-1} (V(\beta)_I)\right)\times T^{(t)}_I\]
 où $T^{(t)}_I=\left\langle \frac{\tilde{z}_i}{\tilde{z}_j}: \; i,j\in I\right\rangle_{\Z\modut}\subset \left\langle \frac{\tilde{z}_i}{\tilde{z}_j}: \; i,j\in \left\llbracket 0,t\right\rrbracket\right\rangle_{\Z\modut}$. 

Introduisons  le complexe $(\CC^i(T^{(t)}_\bullet))_{0\leq i\leq t}:=(\bigoplus_{I\subset \left\llbracket 0,t\right\rrbracket : |I|=i+1}  T^{(t)}_I)_i$  avec pour différentielles les sommes alternées des inclusions. C'est un facteur direct du complexe de Cech $\check{\CC}^\bullet (X^d_t(\beta);\G_m, f^*(\VC(\beta)))$ et on en déduit un isomorphisme \begin{equation}\label{eqcechct}
 \hcech{s} (X^d_t(\beta) ; \G_m, f^* (\VC(\beta)))\cong\hcech{s} (X^d_t(\beta) ; L^* \Of^{**},   f^*(\VC(\beta))\times \hhh^s (\CC^*(T^{(t)}_\bullet)).
\end{equation}
Nous allons, dans la suite de l'argument, calculer les deux termes apparaissant dans ce produit direct. Commençons par la cohomologie du complexe $\CC^*(T^{(t)}_\bullet)$.

\begin{prop}\label{propptzar}
On a pour tout corps $F$ 
\[\hzar{s} (\P^t_{zar,F} ; \G_m)\cong \hhh^s (\CC^*(T^{(t)}_\bullet))=\begin{cases}
\Z & {\rm si}\ s=1\\
\{0\} & {\rm si} \ s>1.
\end{cases}\]

\end{prop}

\begin{proof}
Nous allons procéder en calculant $\hzar{s} (\P^t_{zar,F} ; \G_m)$ de deux manières différentes. Dans un premier temps, nous utiliserons la suite exacte \eqref{eq:exdiv} (voir plus bas) pour donner une expression explicite à ces groupes de cohomologie. Puis nous étudierons la cohomologie de Cech de $\G_m$ sur le recouvrement standard $\VC=\{V_i\}$ de l'espace projectif pour relier $\hzar{s} (\P^t_{zar,F} ; \G_m)$ à la cohomologie du complexe $\CC^*(T^{(t)}_\bullet)$ (cet argument pourrait être vu comme un analogue algébrique de la décomposition \eqref{eqcechct}). 

Pour réaliser ces deux étapes, nous allons rappeler quelques propriétés du foncteur des diviseurs de Cartier. Nous renvoyons à \cite[II.6]{hart}, \cite[Chapitre 11 Paragraphe 9 à 14]{gorwed} pour leur étude sur un schéma localement factoriel. Nous aurons seulement besoin ici du cas où l'espace considéré $X=\spec (A)$ est affine de sections globales factorielles où la situation est beaucoup plus simple. Appelons $S$ l'ensemble des éléments irréductibles dans $A$ à  un inversible près et pour tout ouvert $U$ de $X$, $S(U)$ les éléments $f$ de $S$ tels que $V(f)$ ne rencontre pas $U$, intéressons-nous au faisceau (la propriété de recollement étant claire) 
\[{\rm Div} : U\subset X \mapsto \Z[S(U)].\] 

Les ouverts standards $D(f)=\spec (A[1/f])$  de $X$ sont encore affines de sections globales factorielles et l'ensemble des éléments irréductibles dans $A$ à  un inversible près est $S$ privé des facteurs irréductibles de $f$ à savoir $S(D(f))$. Si on note $\KC_X={\rm Frac}(A)$ les fractions rationnels sur $X$, on en  déduit l'identité 
\[{\rm Div}(D(f))=  \KC_X/\Of^*(D(f))\] par factorialité de $A[1/f]$. Comme les ouverts  standards forment une base de voisinage de $X$, on a la suite exacte de faisceaux
\begin{equation}\label{eq:exdiv}
0\to \G_m \to \KC_{X} \to {\rm Div} \to 0.
\end{equation}
De plus, ${\rm Div}$ est flasque par construction  et le faisceau constant $\KC_{X}$ l'est aussi  par irréductiblité de $X$.

Revenons à notre cas d'étude. Le recouvrement standard $\VC$ de l'espace projectif est constitué d'ouverts affines dont les sections globales sont factorielles. On peut construire comme dans la discussion précédente le faisceaux ${\rm Div}$ sur chacun de ces ouverts $V_i\in \VC$ et ces derniers se recollent sur l'espace projectif tout entier car chaque $V_i \cap V_j$ est encore affine de sections factorielles. Par recollement, on obtient encore une suite exacte
\[0\to \G_m \to \KC_{\P^t_{zar, F}} \fln{\pi}{} {\rm Div} \to 0\]
où les faisceaux $\KC_{\P^t_{zar, F}}$ et ${\rm Div}$ sont encore flasques\footnote{C'est une notion locale.
}; ils n'ont pas de cohomologie en degré strictement positif. Par suite exacte longue, on en déduit les égalités \[\hzar{s} (\P^t_{zar,F} ; \G_m)=0 \ {\rm si} \ s\geq  2\] \[\hzar{1} (\P^t_{zar,F} ; \G_m)={\rm Div}(\P^t_{zar, F})/\pi(\KC_{\P^t_{zar, F}})\cong\Z.\]
Nous laissons au lecteur le soin de vérifier que les sections globales de ${\rm Div}$ sur l'espace projectif s'identifie au  module libre sur $\Z$ engendré par les éléments irréductibles de $K[z_0, \cdots,z_d]$ à une unité près et que $\pi(\KC_{\P^t_{zar, F}})$ s'identifie au sous-ensemble des éléments de masse totale nulle. L'isomorphisme ci-dessus est alors induit par le degré.

On relie maintenant $\hzar{*} (\P^t_{zar,F} ; \G_m)$ à $\hhh^* (\CC^*(T^{(t)}_\bullet))$. On voit que chaque intersection $V_I:=\bigcap_{i\in I} V_i$ comme le spectre d'un anneau factoriel et on déduit de \eqref{eq:exdiv}
\[0\to \G_m \to \KC_{V_I} \to {\rm Div} \to 0\]
que chaque intersection $V_I$ est  $\G_m$-acyclique\footnote{En degré $1$, nous avons utilisé l'égalité ${\rm Div}(V_I)= \KC_X/\Of^*(V_I)$ qui découle de la construction de ${\rm Div}$.}. En particulier, on peut calculer $\hzar{*} (\P^t_{zar,F} ; \G_m)$ via la cohomologie de Cech sur le recouvrement $\VC$ qui admet la décomposition  \[\hcech{s} (\P^t_{zar,F} ; \G_m, \VC)=\hcech{s} (\P^t_{zar,F} ; F^*, \VC)\times\hhh^s (\CC^*(T^{(t)}_\bullet))=\hhh^s (\CC^*(T^{(t)}_\bullet)).\] La dernière égalité s'obtient  par contractibilité du nerf de $\VC$ ce qui termine l'argument


\end{proof}

Étudions maintenant le terme $\hcech{s} (X^d_t(\beta) ; L^* \Of^{**},   f^*(\VC(\beta))$ et décrivons un peu plus précisément le complexe de Cech associé. Pour toute partie $I$, on a une suite exacte de groupes :
\[ 1 \to 1+ \mG_L \to L^* \times \Of^{**} (f^{-1}(V(\beta)_I)) \to L^* \Of^{**} (f^{-1}(V(\beta)_I)) \to 1. \]
D'o\`u une suite exacte au niveau des complexes de Cech sur le recouvrement $f^*(\VC(\beta))$ et une suite exacte longue entre les cohomologies associées. Comme le nerf du recouvrement $f^*(\VC(\beta))$ est contractile, on a \footnote{\label{remcohofsccst}
Pour $s\ge 1$, cet argument prouve aussi l'annulation de $\hcech{s} (X^d_t(\beta) ; A, f^* (\VC(\beta)))$ pour tout faisceau constant $A$ ce qui établit $\han{s} (X^d_t(\beta) ; A)=0$ par \ref{theovdp}. Enfin, grâce à \ref{lemmayervietitere}, nous obtenons  pour tout arrangement tubulaire $\AC$ l'annulation $\han{s} ({\rm Int}(\AC), A)$.
} pour $s\geq 1$
\[\hcech{s} (X^d_t(\beta) ; 1+ \mG_L, f^* (\VC(\beta))) =  \hcech{s} (X^d_t(\beta) ; L^*, f^* (\VC(\beta)))=0.\]

On en déduit une suite d'isomorphisme : \[ \hcech{s} (X^d_t(\beta) ; L^* \Of^{**} , f^*(\VC(\beta))) \cong \hcech{s} (X^d_t(\beta) ; \Of^{**} , f^*(\VC(\beta))) \cong \han{s} (X^d_t(\beta) , \Of^{**}  ) \]
où le dernier isomorphisme s'obtient par acyclicit\'e des polycouronnes (\ref{theovdp} point 2. et \ref{lemoroetun}).

 Revenons à la cohomologie des fibrations. Dans le cas particulier où $t=d$ et $f= {\rm Id }$, on traite alors le cas de l'espace projectif. D'après \eqref{eqcechct}, \ref{coroostst} point 2. et \ref{propptzar}, on a des  isomorphismes  pour $s\geq 1$
\[   \han{s} (\P^d_{ rig, L} ; \G_m)\cong \hhh^s (\CC^*(T^{(d)}_\bullet))=\begin{cases}
L^* & {\rm si}\ s=0\\
\Z & {\rm si}\ s=1\\
\{0\} & {\rm si} \ s>1.
\end{cases} \]

\begin{rem}\label{remct}

L'identification ci-dessus entre les groupes de degré 1 est explicite en termes de fibrés  en droite. On voit que la famille de fonction inversible $(\frac{\tilde{z}_i^k}{\tilde{z}_j^k})_{0\le i,j\le d}$ apparaissant dans le diagramme \eqref{eq:trans} forme un cocycle dans $\CC^*(T^{(d)}_\bullet)$. Cela prouve que l'isomorphisme $\Z\cong \pic (\P^d_{ rig, L})$ est donné par $k\mapsto \Of_{\P^d_{ rig, L}}(k)$.

\end{rem}

Dans le cas général, on voit que la fibration $f$ identifie les facteurs directes isomorphes à $\CC^*(T^{(d)}_\bullet)$ dans les complexes $\check{\CC}^\bullet (X^d_t(\beta);\G_m, f^*(\VC(\beta)))$ et $\check{\CC}^\bullet (\P^t_{ rig, L};\G_m, \VC(\beta))$. La décomposition \eqref{eqcechct} devient
\[\han{s} (X_t^d (\beta), \G_m)\cong \han{s} (X_t^d (\beta), \Of^{**})\times \han{s} (\P^t_{ rig, L}, \G_m)\]
pour $s\ge 1$. 
En degré $0$, on a \[\Of^*(X^d_t(\beta))=\hcech{0} (X^d_t(\beta) ; L^* \Of^{**},  f^*(\VC(\beta)))=L^*\Of^{**} (X^d_t(\beta))=L^*\]
car $\hhh^0 (\CC^*(T^{(t)}_\bullet))=0$ (cf \ref{propptzar}) et $\Of^{**} (X^d_t(\beta))=1+\mG_L$ (voir \ref{theocohoano+xdt}).

Nous terminons l'argument en caractérisant l'image de  $\han{*} (\P^d_{ rig, L}, \G_m)$ dans  $\han{*} (X_t^d (\beta), \G_m)$ dans la décomposition \eqref{eqcechct}.



\begin{lem}\label{lemipiciso} 
L'inclusion $\iota: X^d_t(\beta)\to \P^d_{ rig, L}$ induit un isomorphisme de $\han{*} (\P^d_{ rig, L}, \G_m)$ sur le facteur direct $\han{*} (\P^t_{ rig, L}, \G_m)$ de $\han{*} (X_t^d (\beta), \G_m)$.
\end{lem}

\begin{proof}


Il suffit de montrer l’isomorphisme en degré $1$ car les groupes en degrés supérieurs sont nuls et les sections constantes sont identifiées en degré $0$. On rappelle que l'espace 
\[ X^d_t(\beta)=\{z=[z_0,\cdots,z_d]\in \P_{rig, L}^d, \exists i\leq  t, \forall j\le d, | z_i|\geq |\varpi|^{\beta_i}|z_j| \}\]
admet un recouvrement
\[ X^d_t(\beta)=\bigcup_{i\le t}\{z=[z_0,\cdots,z_d]\in \P_{rig, L}^d,\tilde{z}_i\neq 0 \et \forall j\le t, | \tilde{z}_i|\geq |\tilde{z}_j| \}=\bigcup_{i\le t}f^{*}(V(\beta))_i.\]
De plus, on a cette famille d'ouvert pour $0\le i\le d$
\[V_i:=\{z=[z_0,\cdots,z_d]\in \P_{rig, L}^d,\tilde{z}_i\neq 0  \}\]
 qui recouvre l'espace projectif tout entier et qui vérifie $f^{*}(V(\beta))_i\subset V_i$. La famille de fonctions inversibles
 \[(\frac{\tilde{z}_i}{\tilde{z}_j})_{0\le i,j\le d}\in \prod_{i,j} \Of^*(V_i\cap V_j) =\ccech{1}(\P^d_{rig, L},\G_m, \VC)\] 
définit un cocycle et corresponds donc à un fibré en droite sur $\P^d_{rig, L}$. D'après la remarque \ref{remct}, la classe de ce fibré engendre $\han{1} (\P^d_{ rig, L}, \G_m)$. Par compatibilité des recouvrements $f^{*}(\VC(\beta))$ et $\VC$, la restriction de cette classe à $X_t^d (\beta)$ est donnée par le cocycle 
\[(\frac{\tilde{z}_i}{\tilde{z}_j})_{0\le i,j\le t}\in \prod_{i,j} \Of^*(f^{*}(V(\beta))_i\cap f^{-1}(V(\beta))_j) =\ccech{1}(X^d_t(\beta),\G_m, f^{*}(\VC(\beta)))\] 
qui, toujours par la remarque la remarque \ref{remct}, engendre le facteur direct $\han{1} (\P^t_{ rig, L}, \G_m)$ de $\han{1} (X_t^d (\beta), \G_m)$. Ceci conclut l'argument.


\end{proof}

\begin{rem}\label{remcommiotastfst}
On a en fait montr\'e un résultat plus fort; on a un diagramme commutatif : 
\[ \xymatrix{
\han{s} (X_t^d(\beta), \G_m) & \ar[l]_-{\iota^*} \han{s} ( \P^d_{rig, L}, \G_m) \ar[ld]^{\varphi^*}_{\sim} \\
\han{s} ( \P^t_{rig, L}, \G_m) \ar[u]^{f^*} & 
} \]
o\`u $\varphi$ est le morphisme de $\P^t_{rig,L}$ dans $\P^d_{rig,L}$ donn\'e par $[z_0, \dots , z_t] \mapsto [z_0, \dots, z_t, 0 \dots, 0]$. Par contre, les morphismes au niveau des espaces ne commutent pas.  
\end{rem}

Ainsi tous les points ont été d\'emontr\'es. 
\end{proof}


\subsection{Cohomologie des arrangements tubulaires fermés\label{paragraphdegsuitespec}}

Nous sommes maintenant en mesure de déterminer la cohomologie à coefficients dans $\G_m$ pour les arrangements tubulaires fermés et ainsi donner l'un des résultats principaux de cette article.


\begin{theo}\label{theocohoangmarrtubfer}
Pour tout arrangement tubulaire fermé   $\AC$, les intersections ${\rm Int} (\AC)$ sont $\G_m$-acycliques.  
\end{theo}

\begin{proof}
On peut appliquer le résultat  \ref{lemmayervietitere} car la cohomologie des unions ${\rm Uni} (\AC)$ à coefficients dans $\G_m$ s'annule en degré supérieur ou égal à ${\rm rg}(\AC)$ d'après \ref{corounigm} où ${\rm rg}(\AC)\le |\AC|$.
\end{proof}

Nous avons aussi un résultat de structure pour les fonctions inversibles d'un arrangement tubulaire fermé. Nous aurons besoin de quelques notations.

\begin{defi}

Si $S$ est un ensemble fini et $A$ est un anneau, on note le sous-ensemble $A[S]^0\subset A[S]$ du module libre sur $A$ engendré par $S$ constitué des éléments de masse totale nulle\footnote{ie. les éléments $\sum_{s\in S}a_s\delta_s$ tels que $\sum_{s\in S}a_s=0$}.

\end{defi}

\begin{rem}\label{remidentarralggen}
Si $\AC$ est arrangement tubulaire fermé, nous faisons le choix d'un système de représentants des éléments de $\AC$ par des éléments de $\HC$ puis par des vecteurs unimodulaires que l'on voit comme des formes linéaires $(l_a)_{a\in \AC}$. Cela permet d'identifier $\Z[\AC]^0$ au sous groupe de $\Of^*({\rm Int}(\AC))$
\[\left\langle \frac{l_a (z)}{l_b(z)}: \; a,b\in \AC\right\rangle_{\Z\modut}.\]

\end{rem}

\begin{theo}\label{theoostarrtubfer}
Soit $\AC$ un arrangement tubulaire fermé, 
on a un isomorphisme \[\Of^*({\rm Int} (\AC))/L^*\Of^{**}({\rm Int} (\AC))\simeq \Z[\AC]^0.\]
\end{theo} 

\begin{rem}\label{remisoost}

Le théorème précédent montre plus précisément que la composée \[\Z[\AC]^0\subset\Of^*({\rm Int} (\AC))\flsur \Of^*({\rm Int} (\AC))/L^*\Of^{**}({\rm Int} (\AC))\]  ne dépend pas du choix du système de représentants et est un isomorphisme

\end{rem}

\begin{proof}


Comme dans la remarque \ref{remidentarralggen}, pour tout voisinage tubulaire $a\in\AC$, on fixe une forme linéaire $l_a$ représentée par un vecteur unimodulaire encore noté $a$ telle que $a=\mathring{\ker(l_a)}(|\varpi^{n }|)$. 

On introduit un faisceau $T$ grâce aux suites exactes suivantes
\[0\to \Of^{**}\to\G_m\to \G_m/\Of^{**}\to 0\]
\[0\to L^*/(1+\mG_L)\to \G_m/\Of^{**}\to T\to 0\]
 D'après \ref{theocohoangmxdt}, les flèches naturelles $\han{s}({\rm Uni}(\AC), \Of^{**})\to \han{s}({\rm Uni}(\AC), \G_m)$ sont injectives et les espaces ${\rm Uni}(\AC)$ sont acycliques pour les faisceaux constants (Note \eqref{remcohofsccst}). On en déduit des isomorphismes
\[\han{s}({\rm Uni}(\AC), \G_m/\Of^{**})=\frac{\han{s}({\rm Uni}(\AC), \G_m)}{\han{s}({\rm Uni}(\AC), \Of^{**})}=\begin{cases}
L^*/(1+\mG_L)& \si s=0\\
\Z &\si s=1 \et |\AC|\neq 1\\
0&\sinon
\end{cases},\]
\[\han{s}({\rm Uni}(\AC), T)=\frac{\han{s}({\rm Uni}(\AC), \G_m/\Of^{**})}{\han{s}({\rm Uni}(\AC), L^*/(1+\mG_L))}=\begin{cases}
\Z &\si s=1 \et |\AC|\neq 1\\
0&\sinon
\end{cases}\]
d'où l'acyclicité des espaces ${\rm Int}(\AC)$ pour les faisceaux $\G_m/\Of^{**}$ et $T$ par \ref{lemmayervietitere}. En particulier, la cohomologie à coefficients dans $T$ de ${\rm Uni}(\AC)$ peut se calculer grâce au complexe de Cech sur le recouvrement $\AC^c$ constitué des complémentaires des voisinages tubulaires $a \in \AC$. On sait aussi que $\Of^{**}$ et les faisceaux constants (cf. \eqref{remcohofsccst}) n'ont pas de cohomologie en degré supérieur ou égal à $1$ sur ${\rm Int}(\AC)$ d'où les égalités :
\[(\G_m/\Of^{**})({\rm Int}(\AC))=\Of^*({\rm Int}(\AC))/\Of^{**}({\rm Int}(\AC))\et T({\rm Int}(\AC))=\Of^*({\rm Int}(\AC))/L^*\Of^{**}({\rm Int}(\AC)).\]
Nous chercherons à décrire les sections globales de $T$ sur ${\rm Int}(\AC)$. Montrons par récurrence sur $|\AC|$, que la flèche décrite dans \ref{remisoost} est un isomorphisme 
\[T({\rm Int}(\AC))\cong \Z[\AC]^0.\]
Le reste de l'argument consiste à relier les sections de $T$ sur ${\rm Int}(\AC)$ aux groupes 
$\han{s}({\rm Uni}(\AC), T)$. 

Quand $|\AC|=1$, l'espace ${\rm Int}(\AC)$ est une boule et on a directement $\Of^*({\rm Int}(\AC))=L^*\Of^{**}({\rm Int}(\AC))$.

Pour $|\AC|=2$, on note $l_a$, $l_b$ les deux formes linéaires associées. La suite exacte de Mayer-Vietoris établit un isomorphisme $T({\rm Int}(\AC))\cong \han{1}({\rm Uni}(\AC),T)\cong \Z$.  De plus, d'après la discussion précédente  la flèche surjective $\G_m\to T$  induit un diagramme commutatif 
\[
\begin{tikzcd}
  \Of^*({\rm Int}(\AC))  \ar[r, equal] \ar[d,  twoheadrightarrow] &  \ccech{1}( {\rm Uni}(\AC),  \AC^c,  \G_m)  \ar[r,  twoheadrightarrow] \ar[d] & \han{1}({\rm Uni}(\AC),  \G_m)  \ar[d,  "\wr"  ]   \\ 
  \Of^*({\rm Int}(\AC))/ L^* \Of^{**}({\rm Int}(\AC)) \ar[r, equal]&   \ccech{1}( {\rm Uni}(\AC),  \AC^c,  T)   \ar[r, equal] &  \han{1}({\rm Uni}(\AC),  T).
\end{tikzcd}
\]
Il suffit prouver que $\frac{l_a}{l_b}$ engendre $\pic({\rm Uni}(\AC))=\han{1}({\rm Uni}(\AC),  \G_m) $.  En suivant \ref{excard2}, on peut trouver un changement de variables tel que 
\[
\begin{cases}
a=e_0, \; b=e_1  \mbox{ ou } \\ 
a=e_0,\; b=e_0+\varpi^k e_1 \mbox{ avec }  0<k<n.
\end{cases}
\]
Dans le premier cas,  on a ${\rm Uni}(\AC)\cong X_1^d(n,n)$ et les recouvrements  $\VC(n,n)$  et $\AC^c$ coïncident.  Mais d'après \ref{remct},  on a directement \footnote{On peut aussi utiliser le fait que ${\rm Int}(\AC)$ est ici une polycouronne et déduire le résultat de \ref{claimostrelpolicd}. } $\hcech{1}({\rm Uni}(\AC),  \VC(n,n) ,\G_m)=\left(\frac{l_a}{l_b}\right)^{\Z}$.  Dans le second cas,  on introduit l'ouvert  $U\subset {\rm Uni}(\AC)$ (noté $\mathring{H}_{e_1}(|\varpi^{n-k}|)^c$ dans \ref{excard2}) défini par
\[
U=\{z\in \P^{d}(C) : |z_i|\leq |\varpi^{n-k} z_1| \}.
\] 
Suivant si on échange $a$ et $b$,  on a deux isomorphismes de ${\rm Uni}(\AC) $ vers $ X^d_1(n,n-k)$ et donc  deux recouvrements $\VC(n,n-k)^{(1)}= \{ a^c,  U  \}  $ et $\VC(n,n-k)^{(2)}= \{b^c , U \}$.    Les cohomologies de Cech sur ces deux recouvrements sont bien comprises grâce à \ref{propptzar}, \ref{remct} et on a 
\[
\hcech{1}({\rm Uni}(\AC), \VC(n,n-k)^{(1)},  \G_m) = \left( \frac{l_a}{l_{e_1}} \right)^{\Z} \et \hcech{1}({\rm Uni}(\AC),  \VC(n,n-k)^{(2)},  \G_m)= \left( \frac{l_b}{l_{e_1}} \right)^{\Z}. 
\] 
Le triplet $(\frac{l_a}{l_b},  \frac{l_b}{l_{e_1}},  \frac{l_{e_1}}{l_a})$ définit un $1$-cocyle sur le recouvrement $\{a^c,  b^c, U  \}$  dont l'image  engendre $\hcech{1}({\rm Uni}(\AC), \VC(n,n-k)^{(1)},  \G_m)$ d'après l'équation qui précède.  Ainsi, sa projection $(\frac{l_a}{l_b},  \frac{l_b}{l_{e_1}},  \frac{l_{e_1}}{l_a})\mapsto\frac{l_a}{l_b}$ sur la cohomologie sur le recouvrement $\AC^c$ engendre encore  
\[
\hcech{1}({\rm Uni}(\AC),  \AC^c,  \G_m) =   \han{1}({\rm Uni}(\AC),  \G_m),
\]
ceci conclut l'argument.


Si $|\AC|=3$, on se donne encore $l_a$, $l_b$, $l_c$ des formes linéaires associées aux voisinages tubulaires. Étudions le complexe $\check{\CC}^\bullet ({\rm Uni}(\AC),\AC^c,T)=\check{\CC}^\bullet$ qui calcule les groupes $\han{*}({\rm Uni}(\AC), T)$. Quand $s\neq 1$, tous ces groupes ainsi que $\check{\CC}^3 $ sont nuls et on obtient l'exactitude de la suite :
\begin{equation}\label{eq:exccech}
0\to   \han{1}({\rm Uni}(\AC),T)\to \check{\CC}^1 /\delta(\check{\CC}^0)\to  \check{\CC}^2 \to  0.
\end{equation}

On a $\check{\CC}^2 =T({\rm Int}(\AC))$ et mais aussi les identités suivantes d'après le cas de cardinal $1$ et $2$ :
\[\check{\CC}^0 =T({\rm Int}(\{a\}))\times T({\rm Int}(\{b\}))\times T({\rm Int}(\{c\}))=0,\]
\[\check{\CC}^1 =T({\rm Int}(\{b,c\}))\times T({\rm Int}(\{c,a\}))\times T({\rm Int}(\{a,b\}))=(\frac{l_b}{l_{c}})^\Z\times(\frac{l_c}{l_{a}})^\Z\times(\frac{l_a}{l_{b}})^\Z.\]
En remplaçant  les termes précédents dans la suite exacte \eqref{eq:exccech}, on obtient

\[0\to \Z\fln{\alpha}{} (\frac{l_b}{l_{c}})^\Z\times(\frac{l_c}{l_{a}})^\Z\times(\frac{l_a}{l_{b}})^\Z\fln{\beta}{} T({\rm Int}(\AC))\to 0\]
avec $\beta$ le produit des trois termes. Il suffit maintenant de prouver que ${\rm im}\ \alpha=\ker \beta$ coïncide avec le sous-groupe $G$ engendré par le triplet $(\frac{l_b}{l_{c}}, \frac{l_c}{l_{a}}, \frac{l_a}{l_{b}})$. On a clairement l'inclusion $G\subset \ker \beta$. Le quotient $\ker \beta/G$ est de torsion\footnote{C'est un quotient de $\Z$ par un sous-groupe non trivial.} et s'injecte dans le groupe $(\frac{l_b}{l_{c}})^\Z\times(\frac{l_c}{l_{a}})^\Z\times(\frac{l_a}{l_{b}})^\Z/G$ qui est sans torsion. On en déduit l'annulation de $\ker \beta/G=0$ ainsi que les isomorphismes $T({\rm Int}(\AC))\cong (\frac{l_b}{l_{c}})^\Z\times(\frac{l_c}{l_{a}})^\Z\times(\frac{l_a}{l_{b}})^\Z/G\cong \Z[\AC]^0$.


Maintenant $|\AC|\ge 4$, et supposons le résultat pour tout arrangement tubulaire $\BC$ tel que $|\BC|<|\AC|$. On note encore $\check{\CC}^\bullet$ le complexe $\check{\CC}^\bullet ({\rm Uni}(\AC),\AC^c,T)$. On connaît l'annulation des groupes de cohomologie $\han{|\AC|-1}({\rm Uni}(\AC),T)=\han{|\AC|-2}({\rm Uni}(\AC),T)=0$ d'où une suite exacte
\begin{equation}\label{eq:stex}
 \check{\CC}^{|\AC|-3}\to \check{\CC}^{|\AC|-2}\to \check{\CC}^{|\AC|-1}\to 0.
 \end{equation}
Mais par hypothèse de récurrence, on a

\[\check{\CC}^{|\AC|-3}=\bigoplus_{c,d\in\AC}T({\rm Int}(\AC\backslash\{c,d
\})=\bigoplus_{c,d\in\AC}\Z[\AC\backslash\{c,d
\}]^0,\]

\[\check{\CC}^{|\AC|-2}=\bigoplus_{c\in\AC}T({\rm Int}(\AC\backslash\{c
\})=\bigoplus_{c\in\AC}\Z[\AC\backslash\{c
\}]^0.\]
En remplaçant ces deux termes et en observant que $\check{\CC}^{|\AC|-1}=T({\rm Int}(\AC))$, la suite exacte \eqref{eq:stex} devient :

\[\bigoplus_{c,d\in\AC}\Z[\AC\backslash\{c,d
\}]^0\fln{\varphi}{}\bigoplus_{c\in\AC}\Z[\AC\backslash\{c\}]^0\to T({\rm Int}(\AC))\to 0.\]
Il reste à établir l'isomorphisme $\Z[\AC]^0\cong {\rm Coker}(\varphi)$. Chaque fraction $\frac{l_a}{l_b}$ peut se voir dans $\Z[\AC\backslash\{c,d\}]^0$ ou $\Z[\AC\backslash\{c\}]^0$ pour  $a,b,c,d\in \AC$ distincts. Pour les distinguer, nous introduisons la notation \[(\frac{l_a}{l_b})^{(c,d)}\in \Z[\AC\backslash\{c,d\}]^0 \et (\frac{l_a}{l_b})^{(c)}\in \Z[\AC\backslash\{c\}]^0.\] Chacune des familles $((\frac{l_a}{l_b})^{(c,d)})_{a,b,c,d\in \AC}$, $((\frac{l_a}{l_b})^{(c)})_{a,b,c\in \AC}$ engendre $\bigoplus_{c,d\in\AC}\Z[\AC\backslash\{c,d
\}]^0$ et $\bigoplus_{c\in\AC}\Z[\AC\backslash\{c
\}]^0$  respectivement. Le groupe ${\rm Im}(\varphi)$ est engendré par les éléments $\varphi((\frac{l_a}{l_b})^{(c,d)})=(\frac{l_a}{l_b})^{(c)}(\frac{l_b}{l_a})^{(d)}$. Ainsi, 
la flèche
\begin{align*}
\bigoplus_{c\in\AC}\Z[\AC\backslash\{c\}]^0 &\rightarrow \Z[\AC]^0\\
(\frac{l_a}{l_b})^{(c)} & \mapsto  \frac{l_a}{l_b}
\end{align*}
induit l'isomorphisme ${\rm Coker}(\varphi)\cong \Z[\AC]^0$ voulu.

\end{proof}




\section{Etude des arrangements algébriques généralisés\label{sssectioncohoanarralggen}}


\begin{theo}\label{theocohoanarralggen}
Si $\AC$ est un arrangement algébrique généralisé, alors ${\rm Int} (\AC)$ est acyclique pour les faisceaux $\Of^{(r)}$, $\Of^{**}$ et $\G_m$ en topologie analytique. Les sections sur ${\rm Int} (\AC)$ de $\Of^+$, $\Of^{(r)}$, $\Of^{**}$ sont constantes et on a une suite exacte : \[0 \to L^* \to \Of^{*}({\rm Int}(\AC)) \to \Z \llbracket \AC \rrbracket^0 \to 0.\] 
\end{theo}

\begin{proof}
Considérons la famille $(\AC_n)_n$ d'arrangements tubulaires fermés compatible définie dans (\ref{remarr}). On obtient alors un recouvrement croissant de ${\rm Int} (\AC)=\uni{{\rm Int} (\AC_{n})}{}{}$  qui en fait un espace analytique quasi-Stein. Si $\Ff$ est l'un des faisceaux $\Of^{(r)}$, $\Of^{**}$, $\G_m$, on a la suite exacte \[0\to R^1\varprojlim_n \han{s-1}({\rm Int} (\AC_{n}),\Ff)\to \han{s} ({\rm Int} (\AC),\Ff)\to \varprojlim_n \han{s}({\rm Int} (\AC_{n}),\Ff)\to 0.\]
Par acyclicité des arrangements tubulaires d'hyperplans \ref{theoacyco+arrtubfer}, \ref{coroostst}, \ref{theoostarrtubfer},  on a \[\han{s}({\rm Int}(\AC), \Ff)= \begin{cases} \varprojlim_n \Ff({\rm Int}(\AC_{n})) & \text{ si } s=0 \\ R^1\varprojlim_n \Ff({\rm Int}(\AC_{n})) & \text{ si } s=1 \\ 0 & \text{ si } s \ge 2. \end{cases}\]
On peut appliquer \ref{propr1lim} grâce au point technique  \ref{lemobsarrtubferadd} pour obtenir l'acyclicité de ${\rm Int}(\AC)$ pour  $\Of^{(r)}$, $\Of^{**}$. On en déduit aussi la description des sections globales de  $\Of^+$, $\Of^{(r)}$, $\Of^{**}$ ce qui donne en particulier une autre démonstration du résultat \cite[lemme 3]{ber5}. 

Pour  $\G_m$, on a une suite exacte de systèmes projectifs \ref{theoostarrtubfer}: \[ 0 \to (L^*\Of^{**}({\rm Int} (\AC_{n}))_n \to (\Of^*({\rm Int} (\AC_{n})))_n \to (\Z[\AC_{n}]^0)_n \to 0. \] 
En appliquant le foncteur $\varprojlim_n$, on obtient une suite exacte longue : 
\[ 0 \to L^* \to \Of^{*}({\rm Int}(\AC)) \to \Z \llbracket \AC \rrbracket^0 \to R^1 \varprojlim_n L^* \Of^{**}({\rm Int}(\AC_{n})) \to$$ 
$$ R^1 \varprojlim_n \Of^{*}({\rm Int}(\AC_{n})) \to R^1 \varprojlim_n \Z[\AC_{n}]^0. \]
On a  $R^1 \varprojlim_n L^* \Of^{**}({\rm Int}(\AC_{n})) = R^1 \varprojlim_n \Z[\AC_{n}]^0=0$ d'apr\`es la surjectivité de $\Z[\AC_{n+1}]^0 \to \Z[ \AC_n]^0$ et \ref{propr1lim}. Donc \[ {\rm Pic}_{L}({\rm Int}(\AC))= R^1 \varprojlim_n \Of^{*}({\rm Int}(\AC_{n}))=0 \]
et la suite suivante est exacte  \[0 \to L^* \to \Of^{*}({\rm Int}(\AC)) \to \Z \llbracket \AC \rrbracket^0 \to 0.\]

\end{proof}




\section{Quelques commentaires sur la cohomologie \'etale et de de Rham des arrangements d'hyperplans }

\subsection{Cohomologie \'etale $l$-adique et de de Rham\label{sssectioncohodretarr}}

En appliquant la suite exacte de Kummer, on obtient d'après \ref{theocohoangmarrtubfer}, \ref{theoostarrtubfer}, \ref{theocohoanarralggen}

\begin{coro}\label{corocomposth1etdr}
Soit $m$ un entier. On a les diagrammes : 

\begin{enumerate}
\item si $\AC_n$ est un arrangement tubulaire ferm\'e et $m$ est premier \`a $p$.
\[\xymatrix{
\Of^*( {\rm Int}(\AC_n))/L^*\Of^{**}({\rm Int}(\AC_n)) \ar[r]^-{\kappa} & \het{1}({\rm Int}(\AC_n),\mu_m)/\kappa(L^*) \\
 \ar[u] \Z[ \AC_n]^0  \ar[r]^-{} &\Z/ m\Z[ \AC_n]^0 \ar[u]^{\rotatebox{90}{$\sim$}}. }
\]
\item si $\AC$ est un arrangement alg\'ebrique g\'en\'eralis\'e
\[\xymatrix{
\Of^*( {\rm Int}(\AC))/L^* \ar[r]^-{ \kappa } & \het{1}({\rm Int}(\AC), \mu_m)/\kappa(L^*) \\
 \ar[u] \Z\left\llbracket \AC\right\rrbracket^0  \ar[r]^-{ } & \Z/ m\Z\left\llbracket \AC\right\rrbracket^0 \ar[u]^{\rotatebox{90}{$\sim$}}. }
\]  
\end{enumerate}

\end{coro}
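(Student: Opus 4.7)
The idea is that both diagrams follow directly from the Kummer exact sequence $0\to \mu_m \to \G_m \xrightarrow{m} \G_m \to 0$ combined with the main results of the previous sections. Let $X$ denote either ${\rm Int}(\AC_n)$ or ${\rm Int}(\AC)$. In both cases Theorems \ref{theocohoangmarrtubfer} and \ref{theocohoanarralggen} give $\pic(X)=0$, so the long Kummer sequence collapses to a canonical isomorphism
\[
\kappa:\Of^*(X)/(\Of^*(X))^m\ \stackrel{\sim}{\longrightarrow}\ \het{1}(X,\mu_m).
\]

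The first step is to verify that $\kappa$ kills $\Of^{**}(X)$, so that the top horizontal arrows are well defined after quotienting by $L^*\Of^{**}(X)$ (resp.\ $L^*$) on the source and $\kappa(L^*)$ on the target. Since $m$ is prime to $p$, every non-trivial $m$-th root of unity satisfies $|\zeta-1|=1$ (the value $\Phi_m(1)$ is a unit in $\OC_K$), hence $\mu_m\cap \Of^{**}=\{1\}$; and on the étale site $m\colon \Of^{**}\to \Of^{**}$ is surjective, an étale local $m$-th root of $1+g$ lying near $1$ being produced either by the binomial series (when $|g|$ is small enough) or, in general, by restricting the finite étale Kummer cover $T^m=1+g$ to the open subspace $\{|T-1|<1\}$, which remains étale and surjective onto $X$. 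Taking global sections gives $\Of^{**}(X)=(\Of^{**}(X))^m\subset (\Of^*(X))^m$, whence $\kappa$ vanishes on $\Of^{**}(X)$ and one obtains an induced bijection
\[
\Of^*(X)/\bigl(L^*\Of^{**}(X)\cdot(\Of^*(X))^m\bigr)\ \stackrel{\sim}{\longrightarrow}\ \het{1}(X,\mu_m)/\kappa(L^*).
\]

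For the first diagram ($\AC_n$ a closed tubular arrangement), Theorem \ref{theoostarrtubfer} identifies $\Of^*(X)/L^*\Of^{**}(X)$ with $\Z[\AC_n]^0$, and under this identification the image of $(\Of^*(X))^m$ is $m\Z[\AC_n]^0$. The left-hand side above therefore becomes $\Z[\AC_n]^0/m\Z[\AC_n]^0=(\Z/m\Z)[\AC_n]^0$, giving the right vertical isomorphism; commutativity of the square then follows from the functoriality of $\kappa$ and the fact that, under the identification with $\Z[\AC_n]^0$, taking $m$-th powers corresponds to multiplication by $m$.

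For the second diagram ($\AC$ a generalized algebraic arrangement), Theorem \ref{theocohoanarralggen} gives $\Of^{**}(X)\subset L^*$ (so $L^*\Of^{**}(X)=L^*$) and $\Of^*(X)/L^*\cong \Z\llbracket\AC\rrbracket^0$, so the same argument produces an isomorphism between $\Z\llbracket\AC\rrbracket^0/m\Z\llbracket\AC\rrbracket^0$ and $\het{1}(X,\mu_m)/\kappa(L^*)$. The point I expect to require the most care is the identification of this last quotient with $\Z/m\Z\llbracket\AC\rrbracket^0$. Writing $\AC=\varprojlim_n \AC_n$ with $\AC_n$ finite, one has $\Z\llbracket\AC\rrbracket^0=\varprojlim_n \Z[\AC_n]^0$ and $\Z/m\Z\llbracket\AC\rrbracket^0=\varprojlim_n (\Z/m\Z)[\AC_n]^0$; the transition maps $\Z[\AC_{n+1}]^0\to \Z[\AC_n]^0$ are surjective (coming from the surjective projections $\AC_{n+1}\to \AC_n$), so Mittag–Leffler applied to the exact sequences $0\to m\Z[\AC_n]^0\to \Z[\AC_n]^0\to (\Z/m\Z)[\AC_n]^0\to 0$ shows that quotient by $m$ commutes with the inverse limit, yielding the desired identification and concluding the proof.
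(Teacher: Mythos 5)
Votre preuve est correcte et suit essentiellement la même démarche que celle de l'article : suite exacte de Kummer plus annulation du groupe de Picard (\ref{theocohoangmarrtubfer}, \ref{theocohoanarralggen}), puis réduction modulo $m$ des suites exactes de structure \ref{theoostarrtubfer} et \ref{theocohoanarralggen} en utilisant la $m$-divisibilité de $\Of^{**}$ et l'absence de $m$-torsion de $\Z[\AC_n]^0$ et $\Z\llbracket\AC\rrbracket^0$. La seule différence notable est que l'article obtient $\Of^{**}(X)=(\Of^{**}(X))^m$ directement par convergence globale de la série binomiale $(1+g)^{1/m}$ sur $\Of^{++}$ (les coefficients $\binom{1/m}{k}$ étant dans $\Z_p$ car $m$ est premier à $p$), là où vous passez par un argument de faisceaux étales combiné à la trivialité de $\mu_m\cap\Of^{**}$ — les deux aboutissent à la même conclusion.
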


\begin{proof}
Dans les deux cas, le groupe de Picard de ${\rm Int}(\AC)$ est trivial d'où un isomorphisme par suite de Kummer :
\[ \het{1}({\rm Int}(\AC), \Z/m \Z)\cong  \Of^*( {\rm Int}(\AC))/(\Of^*( {\rm Int}(\AC)))^m.\]
Dans le second cas, la suite exacte de \ref{theocohoanarralggen} devient :
\[0\to L^*/(L^*)^m\to \Of^*( {\rm Int}(\AC))/(\Of^*( {\rm Int}(\AC)))^m\to  \Z/ m\Z\left\llbracket \AC\right\rrbracket^0\to 0.\]
car $\Z\left\llbracket \AC\right\rrbracket^0$ est sans $m$-torsion. L'argument se termine en identifiant $L^*/(L^*)^m$ et $\kappa(L^*)$.

On raisonne de manière similaire dans le premier cas en étudiant la suite exacte :
\[0\to L^*\Of^{**}({\rm Int}(\AC_n))/(L^*\Of^{**}({\rm Int}(\AC_n)))^m\to \Of^*( {\rm Int}(\AC_n))/(\Of^*( {\rm Int}(\AC_n)))^m\to  \Z/ m\Z[ \AC]^0\to 0.\] 
Mais, $\Of^{**}({\rm Int}(\AC_n))$ est $m$-divisible quand  $m$ est premier \`a $p$ car la série formelle $(X-1)^{1/m}$ converge sur $\Of^{++}({\rm Int}(\AC_n))$. On obtient la suite d'identification qui conclut la preuve 
\[L^*\Of^{**}({\rm Int}(\AC_n))/(L^*\Of^{**}({\rm Int}(\AC_n)))^m\cong L^*/(L^*)^m\cong \kappa(L^*).\]
\end{proof}

\begin{prop}\label{proproharrtubouvfernn-1}
Soit $n$ un entier, $\AC$ un arrangement tubulaire ouvert  d'hyperplans d'ordre $n$ et $\tilde{\AC}$ sa projection ferm\'ee d'ordre $n-1$. Alors l'inclusion ${\rm Int} (\tilde{\AC}) \to {\rm Int} (\AC)$ induit un isomorphisme au niveau des groupes de cohomologie de de Rham\footnote{Tous les groupes de cohomologie de de Rham sont calculés sur le site surconvergent. Notons que cette notion coïncide avec la cohomologie usuelle dans le cas tubulaire ouvert où les espaces sont partiellement propres.} (de m\^eme pour la cohomologie \'etale $l$-adique pour $L=C= \hat{\bar{K}}$). 
\end{prop}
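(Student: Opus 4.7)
The plan is to realize ${\rm Int}(\AC)$ as an increasing quasi-Stein exhaustion by closed tubular arrangements of rational orders interpolating between $n-1$ and $n$. For each rational $\nu$ with $n-1 \leq \nu < n$, let $\AC^{(\nu)}$ denote the closed tubular arrangement of order $\nu$ on the same underlying hyperplanes as $\AC$. Choosing a cofinal sequence $\nu_0 = n-1 < \nu_1 < \cdots$ with $\nu_m \to n^-$ gives $\bigcup_m {\rm Int}(\AC^{(\nu_m)}) = {\rm Int}(\AC)$ as an admissible increasing union of affinoids, with $\AC^{(\nu_0)} = \tilde{\AC}$. The standard quasi-Stein spectral sequence (an analogue of \eqref{eq:suitexrlimh} for de Rham or $\ell$-adic \'etale cohomology) reduces the statement to showing that each restriction
\[
\hhh^*({\rm Int}(\AC^{(\nu_{m+1})})) \fll \hhh^*({\rm Int}(\AC^{(\nu_m)}))
\]
is an isomorphism, where $\hhh$ denotes either cohomology theory. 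The $\rrr^1\limp$-term then vanishes automatically, the system being essentially constant, and the limit recovers $\hhh^*({\rm Int}(\tilde{\AC}))$.

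For the isomorphism between two consecutive affinoid pieces, I would exploit the Schneider-Stuhler spectral sequence \eqref{eqsuitspectgen}, which is functorial in the arrangement and produces a morphism of spectral sequences under the inclusion $\AC^{(\nu_m)} \hookrightarrow \AC^{(\nu_{m+1})}$. Its $E_1$-page is built from the cohomologies of the spaces ${\rm Uni}(\BC^{(\nu)}) = X^d_t(\beta)$ and of $\P^d_{rig,L}$, where $\beta$ encodes the tube radii through $\nu$ and the combinatorics of $\BC$. Each $X^d_t(\beta)$ is a locally trivial fibration $f\colon X^d_t(\beta) \fll \P^t_{rig,L}$ with closed polyball fibre. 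Since closed polyballs have trivial positive-degree de Rham cohomology, and, for $L = C$ and $\ell \neq p$, also trivial positive-degree $\ell$-adic \'etale cohomology, independently of their polyradii, the Leray spectral sequence (equivalently, the Cech computation on the admissible cover $f^*(\VC(\beta))$, whose multiple intersections are products of polydisks and polyannuli) identifies $\hhh^*(X^d_t(\beta))$ canonically with $\hhh^*(\P^t_{rig,L})$, independently of $\beta$. The restriction $X^d_t(\beta^{(m+1)}) \hookrightarrow X^d_t(\beta^{(m)})$ induced by $\nu_m < \nu_{m+1}$ is therefore a cohomology isomorphism, producing the required $E_1$-page iso and hence the iso on abutments.

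The main obstacle is making precise the functoriality of the Schneider-Stuhler spectral sequence in this setting and checking carefully the compatibility of the admissible covers $\VC(\beta)$ (which vary with $\beta$) under restriction along $X^d_t(\beta^{(m+1)}) \hookrightarrow X^d_t(\beta^{(m)})$. Once a common refinement of the two covers is chosen, the $E_1$-iso reduces to the radius-independence of the fibration cohomology described above. Philosophically, the result reflects a general rigid-analytic homotopy principle in the spirit of Monsky-Washnitzer and Berthelot: open and closed tubular neighbourhoods of the same combinatorial type carry naturally isomorphic de Rham and $\ell$-adic \'etale cohomologies, and the current proposition is the combinatorial refinement of this principle to arrangements of hyperplanes.
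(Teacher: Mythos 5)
Votre stratégie est correcte sur le fond, mais elle suit une route réellement différente de celle de l'article. L'article applique la suite spectrale \eqref{eqsuitspectgen} directement aux deux arrangements $\AC$ et $\tilde{\AC}$ et compare les deux pages $E_1$ en une seule étape~: pour $\BC\subset\AC$ de projection $\tilde{\BC}$, on a ${\rm rg}(\BC)={\rm rg}(\tilde{\BC})=t+1$ (les deux arrangements sont gouvernés par la même partie de $\HC_n$), et ${\rm Uni}(\BC)\cong Y^d_t(\gamma)\supset X^d_t(\beta)\cong{\rm Uni}(\tilde{\BC})$ sont des fibrations au-dessus du \emph{même} $\P^t_{rig,L}$, à fibres boules ouvertes resp. fermées~; l'axiome d'homotopie pour les deux types de boules identifie alors compatiblement les trois cohomologies, d'où l'isomorphisme des suites spectrales sans aucun passage à la limite. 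Votre détour par une exhaustion quasi-Stein en arrangements tubulaires fermés d'ordres rationnels $\nu\in[n-1,n)$ évite d'invoquer l'axiome d'homotopie pour les boules \emph{ouvertes} (seules des fibres fermées apparaissent), ce qui est un léger gain~; en contrepartie il vous faut (i) étendre le formalisme tubulaire fermé aux ordres rationnels --- ce que l'article ne développe volontairement pas (remarque \ref{remgenprinc}), mais qui ne pose pas de problème ici puisque la combinatoire et le rang de chaque $\BC^{(\nu)}$ sont constants sur $[n-1,n)$ --- et (ii) un argument de $\limp/\rrr^1\limp$ pour chacune des deux théories cohomologiques. Deux précisions~: l'inclusion va dans le sens $X^d_t(\beta^{(m)})\hookrightarrow X^d_t(\beta^{(m+1)})$ ($\nu$ plus grand donne des tubes plus petits, donc un complémentaire plus gros), ce qui n'affecte que le sens des flèches de restriction~; et la réduction au complexe de Cech sur $f^*(\VC(\beta))$ ne peut pas se faire par acyclicité des intersections, car les polycouronnes fermées ne sont pas acycliques pour la cohomologie de de Rham --- il faut, comme vous l'indiquez en premier lieu, passer par Leray pour $f$ ou comparer terme à terme ce complexe de Cech à celui de $\VC(\beta)$ sur $\P^t_{rig,L}$ via Künneth pour le facteur boule fermée, ce qui est exactement le mécanisme de la preuve de l'article.
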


\begin{proof}
\'Ecrivons $\hhh$ l'une des deux cohomologies consid\'er\'ees (avec $L= C$ pour la cohomologie \'etale $l$-adique). La suite spectrale \eqref{eqsuitspectgen} calculant $\hhh$ pour l'arrangement $\AC$ (resp. $\tilde{\AC}$) sera not\'ee $E_j^{-r,s}(\AC)$ et (resp.  $E_j^{-r,s}(\tilde{\AC})$). Nous allons les comparer pour \'etablir le r\'esultat. 

Consid\'erons $\BC$ une  partie de $\AC$ et $\tilde{\BC}$ sa projection dans $\tilde{\AC}$. On a ${\rm rg}(\BC)={\rm rg}(\tilde{\BC})=t+1$. Alors il existe $\beta \in \N^{t+1}$ tel que ${\rm Uni}(\BC)\cong Y^d_t(\beta)$ et  ${\rm Uni}(\tilde{\BC})\cong X^d_t(\beta)$. Les deux  cohomologies $\hhh$ v\'erifient l'axiome d'homotopie ie. pour tout espace analytique $X$, on a des isomorphismes induits par les projections naturelles (voir l'axiome d'homotopie \cite[§2 axiom I)]{scst}) pour la boule ouverte. Pour  la boule fermée, voir la formule Kuenneth \cite[Lemma 2.1.]{GK2} ou \cite[Proposition 3.3.]{GK3} en de Rham et \cite[Lemma 3.3.]{ber6} en $l$-adique) :
\[\hhh^*(X\times\B)\cong\hhh^*(X)\cong\hhh^*(X\times\mathring{\B}).\]
Ainsi,   les fibrations  induisent des isomorphismes entre la cohomologie de $\P^t_{rig,L}$ et celles de $Y^d_t(\beta)$, $X^d_t(\beta)$ compatibles par commutativit\'e du diagramme : 
\[ \xymatrix{
X^d_t(\beta) \ar[rd] \ar[rr] & & Y^d_t(\beta) \ar[ld] \\ & \P^t_{rig,L} & }; \] 
la flèche horizontale étant l'inclusion naturelle.

Par somme directe, on obtient un isomorphisme entre les suites spectrales, d'o\`u le r\'esultat.
\end{proof}

\subsection{Cohomologie \'etale $p$-adique des arrangements alg\'ebriques d'hyperplans\label{sssectioncohoetpadiquearralg}}
Ici, $L=C$ et on verra ${\rm Int}(\AC)$ comme un $C$-espace analytique par extension des scalaires pour $\AC$ un arrangement d'hyperplans $K$-rationnels.
\begin{prop}\label{propcohoetpadiquearralg}
Soit $\AC$ un arrangement alg\'ebrique $K$-rationnel, on a un isomorphisme canonique 
\[ \het{*}( {\rm Int}(\AC), \Q_p) \otimes C \cong \hdr{*} ( {\rm Int}(\AC)). \]
\end{prop}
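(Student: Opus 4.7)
Puisque $\AC$ est fini, ${\rm Int}(\AC)$ est l'analytifi\'e de la $K$-vari\'et\'e quasi-projective lisse $U := \P^d_{zar,K} \setminus \bigcup_{H \in \AC} H$. Par des r\'esultats de type GAGA (Huber--Berkovich pour la cohomologie étale à  coefficients constants, et la comparaison algébrique/surconvergente en cohomologie de de Rham des vari\'et\'es alg\'ebriques lisses), les deux membres de l'isomorphisme annonc\'e se calculent sur la vari\'et\'e alg\'ebrique $U$, ce qui ram\`ene l'\'enonc\'e à  une assertion sur $U/K$.

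Le plan est ensuite d'invoquer le th\'eor\`eme de comparaison $p$-adique et d'exploiter la structure de Hodge particuli\`erement rigide des compl\'ementaires d'arrangements d'hyperplans. Je choisirais d'abord une compactification lisse projective $\bar U$ de $U$ dont le bord $D := \bar U \setminus U$ est à  croisements normaux simples, par exemple une compactification merveilleuse à  la De Concini--Procesi obtenue par \'eclatements successifs le long du treillis d'intersection de $\AC$. Le th\'eor\`eme de comparaison pour les vari\'et\'es lisses ouvertes sur $K$ (Faltings, Tsuji, ou Scholze et Colmez--Niziol dans le cadre analytique rigide) fournit un isomorphisme filtr\'e, $B_{\rm dR}$-lin\'eaire et $\gal(\bar{K}/K)$-\'equivariant
\[
\het{n}(U_C, \Q_p) \otimes_{\Q_p} B_{\rm dR} \;\cong\; \hdr{n}(U/K) \otimes_K B_{\rm dR},
\]
dont le $\mathrm{gr}^0$ donne la d\'ecomposition de Hodge--Tate
\[
\het{n}(U_C, \Q_p) \otimes_{\Q_p} C \;\cong\; \bigoplus_{p+q=n} \hhh^q(\bar U, \Omega^p_{\bar U/K}(\log D)) \otimes_K C(-p).
\]

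J'invoquerais ensuite le th\'eor\`eme de Brieskorn/Orlik--Solomon (valable sur n'importe quel corps de caract\'eristique nulle, sous la forme alg\'ebrique d'Esnault--Schechtman--Viehweg) : $\hdr{n}(U/K)$ est engendr\'e sur $K$ par les produits ext\'erieurs des formes logarithmiques $d\log(l_H) = dl_H/l_H$ pour $H \in \AC$. Ces classes sont toutes dans $F^n \hdr{n}(U/K)$, d'o\`u $F^n \hdr{n}(U) = \hdr{n}(U)$ ; autrement dit $\hhh^q(\bar U, \Omega^p_{\bar U/K}(\log D)) = 0$ sauf pour $(p,q)=(n,0)$. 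La somme de Hodge--Tate se r\'eduit alors à  son unique terme survivant, ce qui donne $\het{n}(U_C,\Q_p)\otimes_{\Q_p} C \cong \hdr{n}(U) \otimes_K C(-n)$, et le twist à  la Tate est invisible au niveau des $C$-modules sous-jacents.

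L'obstacle principal est de justifier proprement l'\'enonc\'e de g\'en\'eration à  la Brieskorn/Orlik--Solomon dans le cadre alg\'ebrique $p$-adique plut\^ot que complexe. Une alternative plus autonome, et plus dans l'esprit de cet article, consisterait à  partir directement du corollaire \ref{corocomposth1etdr} : celui-ci identifie $\het{1}({\rm Int}(\AC), \Q_p(1))$ (apr\`es compl\'etion $p$-adique) à  $\Q_p \otimes \Z[\AC]^0$, dont les g\'en\'erateurs $l_H/l_{H'}$ correspondent via l'application $d\log$ aux g\'en\'erateurs de $\hdr{1}(U/K)$ apr\`es tensorisation par $C$. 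Une fois les deux anneaux de cohomologie connus comme \'etant engendr\'es en degr\'e un, le produit cup propage la comparaison à  tous les degr\'es.
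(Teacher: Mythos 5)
Votre stratégie — réduction à la variété algébrique $U$, théorème de comparaison de de Rham pour les variétés ouvertes, puis pureté de type Hodge--Tate des complémentaires d'arrangements — est cohérente dans son principe mais complètement différente de celle de l'article, et elle bute sur un point que vous traitez comme une routine alors qu'il constitue en réalité le c\oe ur de la difficulté : la \og GAGA étale \fg{} pour les coefficients $p$-adiques. Le théorème de comparaison de Huber--Berkovich entre cohomologie étale algébrique et analytique n'est valable, pour une variété \emph{non propre}, que pour des coefficients de torsion inversible dans le corps résiduel ; pour la $p$-torsion il tombe en défaut (c'est précisément la raison pour laquelle le corollaire \ref{corocomposth1etdr} impose $m$ premier à $p$ dans le cas tubulaire : $\Of^{**}$ d'un affinoïde n'est pas $p$-divisible et la cohomologie de $\mu_p$ y est énorme). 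L'identification $\het{*}({\rm Int}(\AC)_C,\Q_p)\cong\het{*}(U_C,\Q_p)$ n'est donc pas un énoncé que l'on peut citer : si elle se vérifie à la main en degré $1$ grâce à \ref{theocohoanarralggen}, elle est précisément ce que la preuve de l'article est conçue pour contourner en degré supérieur. Le second trou, que vous signalez vous-même, est la génération à la Brieskorn/Orlik--Solomon ; elle se règle côté algébrique par changement de base de la cohomologie de de Rham algébrique et le principe de Lefschetz, mais votre variante \og par cup-produits à partir du degré $1$ \fg{} supposerait de savoir que l'anneau de cohomologie étale \emph{analytique} est engendré en degré $1$, ce qui n'est établi nulle part et reviendrait essentiellement à refaire la comparaison cherchée.

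La preuve de l'article évite ces deux écueils en ne quittant jamais le monde analytique : elle compare terme à terme les deux suites spectrales \eqref{eqsuitspectgen} (étale $p$-adique et de Rham). Les termes $E_1$ sont des sommes de cohomologies des unions ${\rm Uni}(\BC)\cong Z^d_t$, fibrées en espaces affines au-dessus de $\P^t_{rig,C}$ ; le lemme d'annulation de Berkovich ($R^i\phi_*\Lambda=0$ pour l'espace affine relatif) et l'axiome d'homotopie en de Rham ramènent tout à $\P^t$, où la GAGA propre s'applique et où les deux cohomologies sont engendrées par la classe de $\Of(1)$ (classe de Kummer d'un côté, classe de Chern logarithmique de l'autre). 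L'isomorphisme ainsi construit commute aux différentielles et on conclut par convergence. Pour sauver votre approche, il faudrait d'abord établir la comparaison étale algébrique/analytique à coefficients $p$-adiques pour ces espaces ouverts, ce qui, en pratique, repasse par le même argument de suite spectrale.
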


\begin{rem}
Le r\'esultat r\'ecent \cite[Theorem 5.1. ]{GPW3} semble sugg\'erer que l'on a encore le r\'esultat pour les arrangements alg\'ebriques g\'en\'eralis\'es. 
\end{rem}

\begin{proof}
Appelons $E_j^{-r,s}(ét)$ et $E_j^{-r,s}(dR)$ les suites spectrales calculant respectivement la cohomologie \'etale $p$-adique et la cohomologie de de Rham. Nous allons exhiber un isomorphisme canonique $E_j^{-r,s}(ét) \otimes C \to E_j^{-r,s}(dR)$. Consid\'erons alors une union ${\rm Uni} (\BC)$ et \'ecrivons la $Z^d_t$. Nous allons montrer 
\[ \het{*}(Z^d_t, \Q_p) \otimes C \cong \hdr{*} ( Z^d_t).\]

Appelons $\Lambda$ le faisceau constant $\Z/ p^n \Z$. D'apr\`es un r\'esultat de Berkovich (\cite[Lemme 2.2]{ber4}), pour tout espace analytique $S$, tout entier $m$ et $\phi : \A^m_{rig, S} \to S$, on a $R^i \phi_* \Lambda_{\A^m_{rig, S}}=0$ pour $i\geq 1$. On a alors, par la suite spectrale de Leray, pour toute intersection $f^{-1}(V_I)$ de $f^*(\VC)$, $R \psi_* \Lambda_{f^{-1}(V_I)}=R \psi_* \Lambda_{V_I}$ o\`u $\psi : X \to \spg(C)$ pour tout $C$-espace analytique $X$. Par Cech, on obtient que $R \psi_{*} \Lambda_{Z^d_t} = R \psi_{*} \Lambda_{\P^t_{rig, C}}$ d'o\`u un isomorphisme 
\[ \het{i}(Z^d_t, \Q_p) \cong \het{i}(\P^t_{rig, C}, \Q_p). \]

De plus, d'apr\`es (\cite[ théorème 7.3.2.]{djvdp}), on a un isomorphisme canonique $\het{i}(\P^t_{rig, C}, \Q_p) \cong \het{i}(\P^t_{zar, C}, \Q_p)$. Par \'etude du cas alg\'ebrique, on en d\'eduit que $\het{*}(\P^t_{rig, C}, \Q_p)\otimes C$ est engendré en tant que $C$-algèbre graduée par l'image du faisceau tordu $\Of(1)$ par l'application de Kummer ${\rm Pic}(\P^t_{rig, C})\to \het{2}(\P^t_{rig, C}, \Q_p)$. On construit alors un isomorphisme en identifiant les classes logarithmiques. Ces morphismes commutent bien aux diff\'erentielles de la suite spectrale. On en d\'eduit le r\'esultat \`a la convergence.

\end{proof}

\newpage

\bibliographystyle{alpha}
\bibliography{Cohoan_v4}

\end{document}